\documentclass[a4paper, 12pt, reqno]{amsart}

\usepackage{amsaddr}
\usepackage{amsmath}
\usepackage{amsthm}
\usepackage{amssymb}
\usepackage{setspace}
\usepackage{enumerate}
\usepackage{comment}
\usepackage{bm}

\newtheorem{thm}{Theorem}[section]
\newtheorem{cor}[thm]{Corollary}
\newtheorem{prop}[thm]{Proposition}
\newtheorem{lem}[thm]{Lemma}

\theoremstyle{definition}

\newtheorem{rem}[thm]{Remark}

\numberwithin{equation}{section}

\newcommand{\pt}{\partial}

\newcommand{\R}{\mathbb{R}}
\newcommand{\C}{\mathbb{C}}

\newcommand{\G}{\mathcal{G}}

\newcommand{\F}{\mathcal{F}}
\newcommand{\Z}{\mathbb{Z}}
\renewcommand{\epsilon}{\varepsilon}

\newcommand{\pa}{\partial}

\DeclareMathOperator{\re}{Re}
\DeclareMathOperator{\im}{Im}

\title[Scattering for fourth-order NLS]
{Scattering of group-invariant solutions below \\ the ground state   for the fourth-order NLS}

\author{Koichi Komada}
\address{Research Institute for Science and Engineering, \\
Waseda University, 3-4-1 Ohkubo, Shinjuku-ku 169-8555, Japan \\
E-mail: k-komada@aoni.waseda.jp}

\subjclass[2020]{Primary~ 35Q55, Secondary~ 35Q40}

\keywords{fourth-order Schr\"{o}dinger equation, scattering, group invariance}

\date{}

\begin{document}

\maketitle

\vskip5mm
\noindent
{\bf Abstract.} We consider the focusing, $L^2$-supercritical and $\dot{H}^2$-subcritical nonlinear fourth-order Schr\"{o}dinger equation. The scattering of radially symmetric solutions below the ground state was proved by Guo \cite{G} and Dinh \cite{D}. In this paper, we extend the scattering results to group-invariant solutions. In Komada--Masaki \cite{KoMa_N}, the scattering of group-invariant solutions below the ground state was proved under a certain hypothesis. To remove the hypothesis, we establish the non-optimal scattering result for general solutions, where the threshold of action is less than certain fraction of the action of the ground state. This result is analogous to that in Pausader--Shao \cite{PS} for the $L^2$-critical nonlinear fourth-order Schr\"{o}dinger equation.

\section{Introduction}\label{sec:intro}


We consider the focusing nonlinear fourth-order Schr\"{o}dinger equation
\begin{align} \label{eq:4NLS}
      i \pt_t u - \Delta^2 u
      = - |u|^{p-1} u, \ \ \ (t,x) \in \R \times \R^d
\end{align}
with initial data
\begin{align*}
      u(0,\cdot) = u_0(\cdot) \in H^2(\R^d),
\end{align*}
where $u=u(t,x)$ is a complex-valued unknown function and $p$ satisfies
\begin{equation}
      1 + \tfrac{8}{d} < p < \left\{
      \begin{split}
            & \infty && \mbox{if}\ 1 \leq d \leq 4, \\
            & 1 + \tfrac{8}{d-4} && \mbox{if}\ d \geq 5.
      \end{split}
      \right.
\label{eq:intercritical}
\end{equation}
\eqref{eq:4NLS} is locally well-posed in $H^2(\R^d)$ and has three conserved quantities; the mass $M(u)$, the energy $E(u)$ and the momentum $P(u)$:
\begin{align*}
      M(u(t))
      :=&\ \int_{\R^d} |u(t,x)|^2 dx
      = M(u_0), \\
      E(u(t))
      :=&\ \tfrac{1}{2} \int_{\R^d} |\Delta u(t,x)|^2 dx
      - \tfrac{1}{p+1} \int_{\R^d} |u(t,x)|^{p+1} dx
      = E(u_0), \\
      P(u(t))
      :=&\ \im \int_{\R^d} \overline{u}(t,x) \nabla u(t,x) dx
      = P(u_0).
\end{align*}
\eqref{eq:4NLS} is invariant under the scaling transformation
\begin{align*}
      u_{\lambda}(t,x) := \lambda^{\frac{4}{p-1}} u(\lambda^4 t, \lambda x), \ \ \ \lambda > 0.
\end{align*}
By direct computation, 
\begin{align*}
      \|u_{\lambda}(0)\|_{\dot{H}^s(\R^d)}
      = \lambda^{s - \frac{d}{2} + \frac{4}{p-1}} \|u_0\|_{\dot{H}^s(\R^d)}
\end{align*}
for all $s \in \R$. This implies that the critical exponent of homogeneous Sobolev spaces is given by
\begin{align*}
      s_c := \tfrac{d}{2} - \tfrac{4}{p-1}.
\end{align*}
We say that \eqref{eq:4NLS} is $L^2$-supercritical and $\dot{H}^2$-subcritical if $0 < s_c < 2$. This condition holds if and only if $p$ satisfies \eqref{eq:intercritical}.

The classical mathematical model for propagation of intense laser beams in a bulk medium with Kerr nonlinearity is given by the nonlinear Schr\"{o}dinger equation (NLS). To take into account the role of small fourth-order dispersion, Karpman \cite{K}, Karpman--Shagalov \cite{KS} and Fibich--Ilan--Papanicolaou \cite{FIP} studied the fourth-order NLS as follows,
\begin{align}\label{eq:g4NLS}
      i \pa_t u + \mu\Delta u - \epsilon \Delta^2 u = \lambda |u|^{p-1} u.
\end{align}
In Fukumoto--Moffat \cite{FM}, similar equations also appear in the study of a vortex filament in an incompressible fluid.

In this article, we study the global dynamics for \eqref{eq:4NLS} with $p$ satisfying \eqref{eq:intercritical}. 
There are many results on the global dynamics for the fourth-order NLS. For the equation \eqref{eq:g4NLS} in the defocusing case: $\mu\geq0, \epsilon=1, \lambda=1$, Pausader \cite{P07} proved global well-posedness for all radially symmetric initial data in $H^2(\R^d)$ when $d\geq5$ and $1<p\leq1+8/(d-4)$. 
Note that the radial assumption is not necessary for $p<1+8/(d-4)$ and is removed later in \cite{MXZ11, P09JFA} for the remaining $\dot{H}^2$-critical case $p=1+8/(d-4)$ when $d\geq8$. 
Moreover, it was also shown that solutions in those results scatters in the $\dot{H}^2$-critical case. 
We refer the reader to \cite{PX} for the case where $1 \le d \le 4$ and $p > 1 + 8/d$. 
See also \cite{PS} for the high dimensional $L^2$-critical case, i.e., $d \ge 5$ and $p = 1 + 8/d$.

Let us turn to the focusing case: $\mu \ge 0, \epsilon = 1, \lambda = - 1$. 
It is known that ground state solutions play an important role in describing global dynamics of nonlinear dispersive equations. 
This is also the case for the fourth-order nonlinear Schr\"{o}dinger equation. 
Miao--Xu--Zhao \cite{MXZ09} and Pausader \cite{P09DCDS} proved global well-posedness and scattering of radial solutions below the ground state threshold for \eqref{eq:4NLS} in the $\dot{H}^2$-critical case, i.e., $d \ge 5$ and $p = 1 + 8/(d-4)$. 
In \cite{MXZ09,P09DCDS}, the proofs are based on the concentration compactness argument in the spirit of Kenig--Merle \cite{KM}.
In the inter-critical case \eqref{eq:intercritical}, Guo \cite{G} proved the scattering of radial solutions below the ground state when $d\geq2$. The proof in \cite{G} is based on the concentration compactness argument. Dinh \cite{D} provides an alternative proof, avoiding the use of the concentration compactness argument. The proof in \cite{D} relies on the approach introduced by Dodson--Murphy \cite{DM}.

For $\omega>0$, \eqref{eq:4NLS} has standing wave solutions $u(t,x) = e^{i \omega t} \phi(x)$, where $\phi$ is a solution to the fourth-order elliptic equation
\begin{align}\label{eq:SW}
      - \omega \phi - \Delta^2 \phi + |\phi|^{p-1} \phi = 0,\ \ \ x \in \R^d.
\end{align}
For each $\omega > 0$, we define a ground state $Q_{\omega}$ as a solution of \eqref{eq:SW} such that
\begin{align*}
      S_{\omega}(Q_{\omega})
      = \inf\{ S_{\omega}(\phi) \mid 
      \phi \in H^2(\R^d) \setminus \{0\},\ \phi\ \mbox{solves\ \eqref{eq:SW}} \},
\end{align*}
where $S_{\omega}$ is the action functional defined by
\begin{align*}
      S_{\omega}(\phi) :=  \tfrac{\omega}{2} M(\phi) + E(\phi).
\end{align*}
It is known that there exists a ground state solution to \eqref{eq:SW} for all $p > 1$ and $\omega > 0$ (see \cite{BCSN, BN}, for instance). 
Let $K$ be the virial functional defined by
\begin{align*}
      K(\phi)
      :=&\ \tfrac{\pa}{\pa\lambda} E (\lambda^{\frac{d}{2}} \phi(\lambda x))|_{\lambda=1} 
      \notag \\
      =&\ 2 \int_{\R^d} |\Delta \phi(x)|^2 dx
      - \tfrac{d(p-1)}{2(p+1)} \int_{\R^d} |\phi(x)|^{p+1} dx.
\end{align*}
Then we note that every solution $\phi$ of \eqref{eq:SW} satisfies $K(\phi)=0$. Moreover, if $p$ satisfies \eqref{eq:intercritical}, then for all $\omega>0$, 
\begin{align}\label{eq:def_gs_2}
      S_{\omega}(Q_{\omega})
      = \inf \{ S_{\omega}(\phi) \mid \phi \in H^2(\R^d) \setminus \{0\},\ K(\phi) = 0 \}.
\end{align}

In \cite{D,G}, the following result was obtained.

\begin{thm}[\cite{D, G}] \label{thm:radial}
Let $d \ge 2$ and $p$ satisfy \eqref{eq:intercritical}.
If $u_0 \in H^2(\R^d)$ is radial and satisfies
\begin{align} \label{eq:below_gs}
      S_{\omega}(u_0) < S_{\omega}(Q_{\omega}) 
      \ \ \mbox{for some}\ \ \omega > 0,
      \ \ \mbox{and}\ \ 
      K(u_0) \ge 0,
\end{align} 
then there exists a unique global solution $u \in C(\R, H^2(\R^d))$ to \eqref{eq:4NLS} with initial data $u(0)=u_0$. Moreover, $u$ scatters forward and backward in time, i.e., there exist $\phi^{\pm} \in  H^2(\R^d)$ such that
\begin{align}\label{eq:scattering}
      \lim_{t \rightarrow \pm \infty} \|u(t) - e^{-it \Delta^2} \phi^{\pm}\|_{H^2} = 0.
\end{align}
\end{thm}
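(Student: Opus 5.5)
The plan is to follow Dinh's route, namely the Dodson--Murphy approach \cite{DM} adapted to the biharmonic setting, which avoids concentration compactness entirely.

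\textbf{Step 1: variational analysis.} Using the characterization \eqref{eq:def_gs_2}, I will show that the set
\[
\mathcal{K}^+_\omega=\{\phi\in H^2:\ S_\omega(\phi)<S_\omega(Q_\omega),\ K(\phi)\ge0\}
\]
is invariant under the flow. This follows from conservation of $M$ and $E$, continuity of $K(u(t))$ in $t$, and the fact that $K(u(t))=0$ with $u(t)\neq0$ would force $S_\omega(u(t))\ge S_\omega(Q_\omega)$.

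From this I will derive two consequences.
\begin{itemize}
\item \emph{Uniform bound.} On $\mathcal{K}^+_\omega$ we have $S_\omega\ge \frac{\omega}{2}M+\frac{p-1-8/d}{\ldots}\|\Delta\phi\|_{L^2}^2$ type bounds. This gives a uniform $H^2$ bound, and hence global existence by the blow-up alternative.
\item \emph{Coercivity.} There is $\delta>0$ with $K(u(t))\ge\delta\|\Delta u(t)\|_{L^2}^2$ for all $t$.
\end{itemize}
For the coercivity I will argue by scaling. Take $\phi_\lambda=\lambda^{d/2}\phi(\lambda\cdot)$, which preserves the mass, and observe that $\lambda\mapsto E(\phi_\lambda)$ is concave-type in $\lambda^4$ vs.\ $\lambda^{d(p-1)/2}$. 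Since $d(p-1)/2>4$, this gives the quantitative gap $K(\phi)\ge c\,(S_\omega(Q_\omega)-S_\omega(\phi))$-type control. A localized version, $K$ applied to $\chi_R u$, will also be needed: for $R$ large this remains coercive up to errors $O(R^{-2})$ uniformly in $t$.

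\textbf{Step 2: scattering criterion.} I will prove a radial scattering criterion in the spirit of \cite{DM}. Suppose $u$ is a global radial solution bounded in $H^2$. Then there exist $\epsilon,R$ such that if
\[
\liminf_{t\to\infty}\int_{|x|\le R}|u(t,x)|^2dx\le\epsilon^2,
\]
the solution scatters forward.

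The proof runs as follows. Using Strichartz estimates for $e^{-it\Delta^2}$ and Duhamel, split the nonlinear part into a recent window $[T-\epsilon^{-\alpha},T]$ and the distant past. The recent window is controlled by small local mass, spread to $L^{p+1}$-type norms via interpolation and the radial Sobolev (Strauss) inequality, which gives decay of radial $H^1$ functions away from the origin. The distant past is controlled by the dispersive estimate $|e^{-it\Delta^2}|\lesssim|t|^{-d/4}$. The outcome is smallness of $e^{-i(t-T)\Delta^2}u(T)$ in an $\dot H^{s_c}$-level Strichartz norm on $[T,\infty)$, and hence scattering by the small-data theory.

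\textbf{Step 3: Morawetz estimate (main obstacle).} Finally I will establish the estimate
\[
\frac1T\int_0^T\int_{|x|\le R}|u|^{p+1}\,dx\,dt\lesssim \frac{R}{T}+R^{-?}.
\]
It comes from a localized virial/Morawetz identity with weight $a(x)$ equal to $|x|^2$ near the origin and $\sim R|x|$ far away. I expect this to be the hardest step. The fourth-order virial identity contains many terms involving $\partial^k a$ up to order 4 (terms like $\partial_j\partial_k\Delta u$ contracted with $\partial_j\partial_k a$), so controlling the exterior region requires the radial decay together with the nonpositivity of the leading Hessian-type term for a well-chosen convex weight. I will follow Dinh's computation, using in the exterior the bound $\|u\|_{L^\infty(|x|>R)}\lesssim R^{-(d-1)/2}\|u\|_{L^2}^{1/2}\|\nabla u\|_{L^2}^{1/2}$.

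Combining the Morawetz estimate with the localized coercivity from Step 1, and choosing $T$ large and then $R$, produces times $t_n\to\infty$ at which the local potential energy, and hence (via Gagliardo--Nirenberg on the ball) the local mass, is small. Step 2 then gives forward scattering. Backward scattering follows by time reversal.
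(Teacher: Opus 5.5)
Your outline is Dinh's Dodson--Murphy argument, which is one of the two proofs the paper cites for this statement. The paper does not reprove it. Instead it recovers it as the case $G=G_{\mathrm{rad}}$ of Theorem~\ref{thm:G-inv}. There $\#Gx=\infty$ for $|x|=1$, so $m_{G,\omega}=\infty$, and the statement reduces to Theorem~\ref{thm:previous} of \cite{KoMa_N}; Theorem~\ref{thm:main} is not needed. That is a Kenig--Merle argument:
in the $G$-invariant profile decomposition, a profile drifting to infinity would come with $\#Gw^j=\infty$ copies, so it must vanish; the critical element is therefore precompact with no translation parameter, and the localized virial identity together with Lemma~\ref{lem:coercivity} rules it out.

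Compared with that, your route needs no profile decomposition or long-time perturbation theory and is more quantitative. However, it uses the Strauss inequality essentially, both in the criterion and in the exterior part of the Morawetz estimate, so it is intrinsically radial with $d\ge2$. The critical-element route uses the symmetry only to pin the solution at the origin. That is what lets the paper extend it to general $G$, including $d=1$, at the price of Theorem~\ref{thm:main} when orbits are finite.

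There is one step that fails as you wrote it: the distant-past term in Step 2. Following Dodson--Murphy, you bound, for $t\ge T$,
\[
\Bigl\|\int_0^{T-\epsilon^{-\sigma}} e^{-i(t-s)\Delta^2}\bigl(|u|^{p-1}u\bigr)(s)\,ds\Bigr\|_{L^\infty_x}
\lesssim \sup_{s}\|u(s)\|_{L^p}^p\int_0^{T-\epsilon^{-\sigma}}|t-s|^{-\frac d4}\,ds .
\]
You need the right-hand side to be $\lesssim (t-T+\epsilon^{-\sigma})^{1-\frac d4}$, which then becomes small in $L^q_t([T,\infty))$ after interpolating with the $L^\infty_tL^2_x$ bound. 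This requires $d>4$.

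For $2\le d\le 4$ the rate $|t|^{-d/4}$ is not time-integrable. For $t\sim T$ the integral is of size $T^{1-d/4}$, or $\log T$ when $d=4$. Since $T$ must be taken arbitrarily large (a time of small local mass at which the free evolution of $u_0$ is already small), no smallness comes out. So in exactly the low dimensions the statement covers, Step 2 needs an additional ingredient that your proposal does not supply. One option is the faster decay $|t|^{-d/2}$ at frequencies $\gtrsim1$ coming from \eqref{eq:stationary_phase}, combined with the $\dot H^{s_c}$-gain at low frequencies; this can handle $d=3,4$, but $d=2$ remains borderline. This, rather than the Morawetz estimate, is the genuinely dimension-sensitive point of your route.

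Two smaller corrections:
\begin{itemize}
\item The bound $K(\phi)\ge c\,(S_\omega(Q_\omega)-S_\omega(\phi))$ is false for small $\phi$. What you need is $K(\phi)\ge\delta\|\Delta\phi\|_{L^2}^2$, which is Lemma~\ref{lem:coercivity} (amplitude scaling plus Lemma~\ref{lem:def_gs_3}). Its localized version follows the same way from $\tilde S_\omega(\chi_R u(t))\le S_\omega(u)+O(R^{-2})$.
\item In Step 3, passing from small $\int_{|x|\le R_n}|u(t_n)|^{p+1}dx$ to small mass on the fixed ball of Step 2 is just H\"older, not Gagliardo--Nirenberg. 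The parameters must be chosen with $R_n\to\infty$ first and then $T_n$ with $R_n/T_n\to0$, not the other way round.
\end{itemize}
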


\begin{rem}
In \cite{D,G}, to be precise, scattering was obtained for radial initial data $u_0$ satisfying 
\begin{equation}\label{eq:scale_prod}
      \left\{
      \begin{split}
            M(u_0)^{\frac{2-s_c}{s_c}} E(u_0) < &\ M(Q)^{\frac{2-s_c}{s_c}} E(Q), \\
            \|u_0\|_{L^2}^{\frac{2-s_c}{s_c}} \|\Delta u_0\|_{L^2}
            < &\ \|Q\|_{L^2}^{\frac{2-s_c}{s_c}} \|\Delta Q\|_{L^2},
      \end{split}
      \right.
\end{equation}
where $Q$ is a ground state solution to \eqref{eq:SW} for $\omega = 1$.
We note that \eqref{eq:scale_prod} is equivalent to the condition \eqref{eq:below_gs}.
\end{rem}

\begin{rem}
The existence of radially symmetric ground state solutions to \eqref{eq:SW} implies that the bound on action in \eqref{eq:below_gs} is sharp. Boulenger--Lenzmann \cite{BL} proved the existence of radial  ground states for \eqref{eq:SW} when $d \ge 1$ and $p$ is odd integer. However, if $p$ is not odd, it remains unknown due to the inapplicability of classical rearrangement techniques caused by the presence of the fourth-order term. In \cite{BL}, the proof relies on the symmetric-decreasing rearrangement in Fourier space, which is limited to the case where the nonlinearity is a polynomial and hence applicable only when $p$ is odd.
\end{rem}

\begin{rem}
Boulenger--Lenzmann \cite{BL} proved that solutions to \eqref{eq:4NLS} with radial initial data $u_0\in H^2(\R^d)$ satisfying 
\begin{align*}
      S_{\omega}(u_0) < S_{\omega}(Q_{\omega})
      \ \ \mbox{for some}\ \ \omega > 0, \ \ \mbox{and}\ \ 
      K(u_0) < 0
\end{align*}
blows up in finite time when $d\geq2$, $p$ satisfies \eqref{eq:intercritical} and $p\leq9$.
\end{rem}


It is worth noting that in both \cite{D,G}, the radially symmetric assumption is retained, and the one-dimensional case is excluded. In this paper, we extend the scattering results for radial solutions to the result for group-symmetric solutions.

Let $M_d(\R)$ be the set of $d\times d$ matrices with real entries.
Let $O(d)$ denote the set of $d\times d$ orthogonal matrices, i.e.,
\begin{align*}
      O(d) := \{\mathcal{R} \in M_d(\R) \mid \mathcal{R}^{T} \mathcal{R} = \mathcal{I}_d \},
\end{align*}
where the transpose of a matrix $\mathcal{A}$ is written by $\mathcal{A}^{T}$ and $\mathcal{I}_d$ denotes the $d\times d$ identity matrix. 
Note that $O(1)=\{\pm1\}$.
We consider a subgroup $G$ of $\R / 2 \pi \Z \times O(d)$, where $\R / 2 \pi \Z \times O(d)$ is a group with the binary operation $(+,\cdot)$. 
Throughout this paper, we assume that for any $(\theta_1,\G_1),(\theta_2,\G_2) \in G$, $\G_1=\G_2$ implies  $\theta_1=\theta_2$. 
This assumption reads as  $\theta$-part is given as a function of $\G$,  and hence
we can use the notation $\G$ without confusion to denote not only a matrix but also an element of $G$. In the sequel, we freely use this identification.
For a subgroup $G$ of $\R / 2 \pi \Z \times O(d)$, we define the Sobolev space with $G$-invariance by
\begin{align*}
      H_G^k(\R^d) := \{ f \in H^k(\R^d) \mid f = \G f,\ \forall \G \in G \},
\end{align*}
where $\G f(x) := e^{-i\theta} (f\circ\G^{-1}) (x) = e^{-i\theta} f (\G^{-1} x)$ for $\G = (\theta,\G) \in G$. 
We remark that the above assumption of $G$ is a natural one since one has $H_G^k(\R^d)=\{0\}$ if it fails. 
We also remark that the radial case corresponds to the case $d\ge2$ and $G_{\mathrm{rad}}=\{0\} \times O(d)$.
Other simple examples of $G$ in $d=1$ are
$G_{\mathrm{even}} = \{ (0,1),(0,-1) \}$
and $G_{\mathrm{odd}} = \{ (0,1),(\pi,-1) \}$,
which give even subspace and odd subspace of Sobolev space $H^k(\R)$, respectively.

Note that since the Cauchy problem of \eqref{eq:4NLS} with $p$ satisfying \eqref{eq:intercritical} is well-posed in $H^2(\R^d)$, it is obvious that if the initial data $u_0$ belongs to $H_G^2(\R^d)$, then the corresponding solution $u$ to \eqref{eq:4NLS} is also $G$-invariant, i.e., $u(t)=\G u(t)$ for all $\G\in G$, due to the fact that the operators $\Delta$ and $\Delta^2$ are invariant for group actions by $\R/2\pi\Z\times O(d)$ and the nonlinear term of \eqref{eq:4NLS} is gauge invariant.

Our main result is stated as follows.

\begin{thm} \label{thm:G-inv}
Let $d \geq 1$ and $p$ satisfy \eqref{eq:intercritical}.
Let $G$ be a subgroup of $\R / 2 \pi \Z \times O(d)$ such that $\inf_{|x|=1} \# G x = \inf_{|x|=1} \# \{ \G  x \mid \G \in G \} \geq 4^{\frac{2}{p-1}}$.
If $u_0\in H^2(\R^d)$ is $G$-invariant and satisfies \eqref{eq:below_gs}, then there exists a unique global solution $u\in C(\R, H^2(\R^d))$ to \eqref{eq:4NLS} with initial data $u(0)=u_0$. Moreover, $u$ scatters forward and backward in time.
\end{thm}

\begin{rem}
The condition $\inf_{|x|=1}\# Gx \ge 4^{\frac{2}{p-1}}$ comes from the use of Theorem \ref{thm:main} in the proof of Theorem \ref{thm:G-inv}. See below for more details on this technical assumption.
\end{rem}

\begin{rem}
When $d \geq 2$, $\inf_{|x|=1} \# G_{\mathrm{rad}} x = \infty$. Hence, Theorem \ref{thm:G-inv} clearly includes the previous Theorem \ref{thm:radial}.
When $d=1$, the condition \eqref{eq:intercritical} leads to $p>9$, and so, $\inf_{|x|=1} \# G_{\mathrm{even}} x = \inf_{|x|=1} \# G_{\mathrm{odd}} x = 2 > 4^{\frac{2}{p-1}}$. Therefore, Theorem \ref{thm:G-inv} proves the scattering of even or odd solutions below the ground state in the one dimensional case, which had not been addressed in previous studies \cite{D, G}.
\end{rem}

\begin{rem}
Inui \cite{Inui17, Inui18} studied the global dynamics of group-invariant solutions for the nonlinear Schr\"{o}dinger equation and proved the scattering of solutions above the ground state. We remark that these results for group-invariant solutions rely on the known results in \cite{AN, DHR, FXC}, where the scattering of general solutions below the ground state was proved.
\end{rem}

In Komada--Masaki \cite{KoMa_N}, the scattering result for group-invariant solutions to \eqref{eq:4NLS} under a certain hypothesis was obtained (see also Komada--Masaki \cite{KoMa_DCDS} for even solutions in $d=1$). To explain the results in \cite{KoMa_N}, we define the thresholds of action for general solutions and group-invariant solutions, respectively. If an initial data $u_0$ satisfies \eqref{eq:below_gs}, then by using the conservation laws and by applying the variational analysis, we see that the corresponding solution $u$ exists globally in time and $\|u\|_{L_t^{\infty}H_x^2(\R\times\R^d)}<\infty$ (see Lemma \ref{lem:invariant_set} for instance).
For any interval $I\subset\R$, we define a scattering size by
\begin{align*}
      \|u\|_{X(I)}
      := \|u\|_{L_t^{q_1} L_x^r(I\times\R^d)},
\end{align*}
where $r=p+1, q_1=\tfrac{4(p-1)(p+1)}{8-(d-4)(p-1)}$.
Then for all global solutions $u$ of \eqref{eq:4NLS}, the boundedness of the scattering size 
\begin{align}\label{eq:scattering_criteria}
      \|u\|_{X(\R)} < \infty
\end{align}
implies the scattering \eqref{eq:scattering} (see Proposition \ref{prop:H^2-scattering}).
For $\omega>0$ and for $L\geq0$, we define
\begin{align*}
      C_{\omega}(L)
      := \sup \{ \|u\|_{X(I)}^{q_1} \mid S_{\omega}(u_0) \leq L,\ K(u_0) \geq 0 \},
\end{align*}
where the supremum is taken over all solutions $u$ to \eqref{eq:4NLS} defined on $I\times\R^d$  with initial data $u_0\in H^2(\R^d)$.
By using Proposition \ref{prop:small_data} and \ref{prop:stability}, we see that $C_{\omega}(L)$ is sublinear for small $L>0$ and that $L\mapsto C_{\omega}(L)$ is non-decreasing and continuous whenever $C_{\omega}(L)$ is finite.
Let
\begin{align}\label{eq:threshold}
      L_{\omega}^*
      := \sup \{ L \in [0,\infty) \mid C_{\omega}(L) < \infty \}.
\end{align}
Then $L_{\omega}^*\geq S_{\omega}(Q_{\omega})$ for all $\omega>0$ implies that for all solutions $u$ to \eqref{eq:4NLS} with initial data $u_0$ satisfying \eqref{eq:below_gs}, \eqref{eq:scattering_criteria} holds and so $u$ scatters. Note that $L_{\omega}^*\leq S_{\omega}(Q_{\omega})$ follows from $C_{\omega}(S_{\omega}(Q_{\omega}))=\infty$, which is a consequence of the fact that $\|e^{i\omega t}Q_{\omega}\|_{X(\R)}=\infty$. For $\omega>0$ and for $L\geq0$, we also define the criterion for $G$-invariant solutions by
\begin{align*}
      C_{G,\omega}(L)
      := \sup \{ \|u\|_{X(I)}^{q_1} \mid S_{\omega}(u_0) \leq L,\ K(u_0) \geq 0 \},
\end{align*}
where the supremum is taken over all $G$-invariant solutions $u$ to \eqref{eq:4NLS} defined on $I\times\R^d$ with initial data $u_0\in H_G^2(\R^d)$.
Let
\begin{align}\label{eq:threshold_G}
      L_{G,\omega}^*
      := \sup \{ L \in [0,\infty) \mid C_{G,\omega}(L) < \infty \}.
\end{align}
Then $L_{G,\omega}^*\geq S_{\omega}(Q_{\omega})$ for all $\omega>0$ implies that for all $G$-invariant solutions $u$ to \eqref{eq:4NLS} with initial data $u_0$ satisfying \eqref{eq:below_gs}, \eqref{eq:scattering_criteria} holds. We note that by the definition, $C_{G,\omega}(L)\leq C_{\omega}(L)$ holds and hence $L_{G,\omega}^*\geq L_{\omega}^*$ for all subgroups $G$ of $\R / 2 \pi \Z \times O(d)$.

For a subgroup $G$ of $\R / 2 \pi \Z \times O(d)$ and for $\omega>0$, we define
\begin{align*}
      m_{G,\omega}
      := \inf_{|x|=1} (\# G x) L_{G_{x},\omega}^*,
\end{align*}
where $\# G x = \# \{ \G x \mid \G \in G \}$ and $G_x = \{ \G \in G \mid \G x = x \}$ is a subgroup of $G$.
In the previous works \cite{KoMa_N, KoMa_DCDS}, the following result for scattering of group-invariant solutions was obtained.

\begin{thm}[\cite{KoMa_N, KoMa_DCDS}]\label{thm:previous}
Let $d\geq1$ and $p$ satisfy \eqref{eq:intercritical}.
Let $G$ be a subgroup of $\R/2\pi\Z\times O(d)$.
For all $\omega>0$, $L_{G,\omega}^*\geq S_{\omega}(Q_{\omega})$ holds true under the hypothesis $L_{G,\omega}^* < m_{G,\omega}$.
\end{thm}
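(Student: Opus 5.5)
We argue by contradiction via the concentration compactness/rigidity scheme of Kenig--Merle, restricted to the class $H_G^2(\R^d)$. Fix $\omega>0$ and suppose $L_{G,\omega}^*<S_\omega(Q_\omega)$. Since $C_{G,\omega}$ is finite and continuous for small $L$ (Propositions \ref{prop:small_data} and \ref{prop:stability}), we have $L_{G,\omega}^*>0$. We then take a sequence of $G$-invariant solutions $u_n$ with $S_\omega(u_n(0))\to L_{G,\omega}^*$, $K(u_n(0))\ge0$ and $\|u_n\|_{X(I_n)}\to\infty$. The goal is to extract a minimal $G$-invariant non-scattering solution with precompact orbit (modulo the symmetries that commute with $G$), and then exclude it by a localized virial argument. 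That rigidity step works below $S_\omega(Q_\omega)$ because $K$ stays uniformly coercive on the sublevel set (Lemma \ref{lem:invariant_set}).

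The key step, and the place where the hypothesis $L_{G,\omega}^*<m_{G,\omega}$ enters, is the linear profile decomposition in $H^2$ for the bounded sequence $u_n(0)$, adapted to $G$-invariance. Each profile comes with spatial translations $x_n^j$, and because $u_n(0)=\G u_n(0)$, the set of profiles is permuted by $G$. We distinguish two cases according to the behaviour of $x_n^j$.
\begin{enumerate}[(i)]
\item If $x_n^j$ stays bounded, we may take $x_n^j=0$, and the profile is itself $G$-invariant.
\item If $|x_n^j|\to\infty$, then after passing to a subsequence $x_n^j/|x_n^j|\to x$ with $|x|=1$. The orbit $Gx_n^j$ then produces at least $\#Gx$ mutually orthogonal copies of the profile, all with the same action. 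The profile inherits invariance only under the stabilizer $G_x$ (up to a careful limiting argument).
\end{enumerate}
By orthogonality of action and virial, the Pythagorean decomposition of $S_\omega$ holds. Suppose the sequence does not reduce to a single profile. In case (ii) each profile then has action at most $L_{G,\omega}^*/\#Gx<L_{G_x,\omega}^*$, using the hypothesis $L_{G,\omega}^*<m_{G,\omega}$. In case (i), or when there are several profiles, each profile has action strictly below $L_{G,\omega}^*$. In both situations every nonlinear profile has finite scattering size, by the definition of $L^*_{G_x,\omega}$ (respectively $L^*_{G,\omega}$). The long-time perturbation result (Proposition \ref{prop:stability}), together with decoupling of the nonlinear profiles in $X$, then bounds $\|u_n\|_{X(I_n)}$, which is a contradiction. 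Hence there is exactly one profile, it is of type (i), and the remainder tends to zero in $H^2$. This yields a critical element $u_c\in H_G^2$ with $S_\omega(u_c)=L_{G,\omega}^*$ and $K(u_c)\ge0$, whose orbit is precompact in $H^2$ modulo translations $x(t)$ that can be taken $G$-compatible (essentially $x(t)$ lies in the fixed subspace of $G$).

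Finally, we rule out $u_c$. Using momentum conservation together with a Galilean-type normalization, we arrange $P(u_c)=0$; since \eqref{eq:4NLS} lacks Galilean invariance, we instead use precompactness and the orthogonality decomposition to show $|x(t)|=o(t)$. We then apply the localized virial identity for the fourth-order equation, the Morawetz-type quantity $\im\int \bar u\,\nabla\chi_R\cdot\nabla u$, whose time derivative equals $2K(u)$ up to errors controlled by the tails of the compact orbit. Coercivity gives $K(u_c(t))\gtrsim \|u_c(t)\|_{L^{p+1}}^{p+1}\ge c>0$, and the derivative of the quantity is bounded below by $c$ while the quantity itself is $O(R)$; this forces $u_c\equiv0$, contradicting $\|u_c\|_X=\infty$.

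I expect the main obstacle to be the profile decomposition step (ii). There we must show rigorously that the escaping profiles are genuinely $G_x$-invariant, and that the $\#Gx$ copies, together with the phase factors $e^{-i\theta}$, are asymptotically orthogonal. That orthogonality is exactly what makes $m_{G,\omega}$ the correct quantity. The other delicate point is controlling $x(t)$ in the rigidity step without Galilean invariance.
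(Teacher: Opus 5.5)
Your compactness half follows the route the paper describes: a $G$-adapted profile decomposition, the split into bounded versus escaping translations, and exclusion of escaping profiles via their $\#Gw$ copies and $L^*_{G_w,\omega}$. The rigidity step, however, has a genuine gap. You keep a translation parameter $x(t)$ for the critical element. You propose to control it by a Galilean-type normalization giving $P(u_c)=0$, and then to deduce $|x(t)|=o(t)$ from precompactness. For \eqref{eq:4NLS} there is no Galilean transformation. The usual passage from $P=0$ to $|x(t)|=o(t)$ also breaks down, because the center of mass obeys $\tfrac{d}{dt}\int x_j|u|^2dx=4\im\int\partial_j\overline{u}\,\Delta u\,dx$, which is not the conserved momentum. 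This is exactly the non-radial obstruction the paper points to, and the reason Theorem \ref{thm:main} only reaches a fraction of $S_\omega(Q_\omega)$. So as written your localized virial argument is not closed: with a moving center, the tail errors of a weight centered at $0$ are not small uniformly in time.

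The missing idea is to use the hypothesis $L^*_{G,\omega}<m_{G,\omega}$ a second time, on the orbit of the critical element. Take an arbitrary sequence $t_n$. Then $u_c$ still has infinite $X$-norm on both sides of $t_n$, so the same extraction applies to $u_c(t_n)$. Escaping translations are excluded by the same comparison $S_\omega(\phi)=L^*_{G,\omega}/\#Gw<m_{G,\omega}/\#Gw\le L^*_{G_w,\omega}$. Hence $u_c(t_n)$ converges in $H^2$ along a subsequence \emph{without} translation. So $\{u_c(t)\mid t\in\R\}$ is precompact in $H^2$, i.e. localized near the origin uniformly in time, and the virial weight centered at $0$ closes the argument directly.

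Your own remark that $x(t)$ lies in the fixed subspace of $G$ already contains this. If a unit vector $w$ were fixed by all of $G$, then $\#Gw=1$ and $G_w=G$, so $m_{G,\omega}\le L^*_{G,\omega}$, contradicting the hypothesis. Thus the fixed subspace is $\{0\}$ and $x(t)\equiv0$. The same fact gives $P(u)=0$ for every $u\in H^2_G(\R^d)$, since $P(\G u)=\G P(u)$. So no normalization is needed, and you could even finish with Corollary \ref{cor:zero_mom}.

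A smaller point concerns step (ii). An escaping profile need not be invariant under the stabilizer of the limit direction $\lim x_n/|x_n|$: take $x_n=ne_1+\sqrt{n}e_2$ and $\mathrm{diag}(1,-1)\in G$. Then it has more than $\#Gx$ separating copies and a smaller invariance group. So $w$ should be the direction of a suitably adjusted translation rather than the limit, which is why the paper's profiles are $G_{w^j}$-invariant for some $w^j$.
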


\begin{rem}
If $m_{G,\omega}<\infty$, then $L_{G,\omega}^*\leq m_{G,\omega}$ holds in general (see \cite{KoMa_N}). Hence, the hypothesis $L_{G,\omega}^*<m_{G,\omega}$ in Theorem \ref{thm:previous} reads as the failure of the identity. If $m_{G,\omega}=\infty$, then the hypothesis is not necessary in Theorem \ref{thm:previous} (see also \cite{KoMa_N}).
\end{rem}

\begin{rem}
In the case $L_{G,\omega}^* = m_{G,\omega}$, we have
\begin{align*}
      L_{G,\omega}^* = m_{G,\omega}
      \geq \left( \inf_{|x|=1} \# G x \right) L_{\omega}^*.
\end{align*}
From the small data theory (Proposition \ref{prop:small_data}), we see that there exists $\delta>0$ such that $L_{\omega}^*\geq\delta$. Therefore, if $\inf_{|x|=1}\# Gx$ is sufficiently large depending on $\delta$, we obtain $L_{G,\omega}^*\geq S_{\omega}(Q_{\omega})$.
\end{rem}

In \cite{KoMa_N, KoMa_DCDS}, the proof of Theorem \ref{thm:previous} is based on the concentration compactness argument introduced by Kenig--Merle \cite{KM}, where they proved global well-posedness and scattering of radial solutions below the ground state for the focusing energy-critical nonlinear Schr\"{o}dinger (NLS) equation. We also refer to Holmer--Roudenko \cite{HR} for the focusing 3D cubic NLS in the radial case. Roughly speaking, the argument begins by contradiction and  reformulating the problem as a variational problem. First, we suppose that the threshold for scattering is strictly below the ground state. Then, we can construct a critical non-scattering solution standing exactly at the threshold. If the problem is restricted to the radial case, we can show that the critical solution is spatially localized near the origin uniformly in time. This uniform localization enables us to establish the local virial identity, which leads to a contradiction. In \cite{G}, the proof of Theorem \ref{thm:radial} follows this strategy.

We recall that the scattering results for NLS is extended to the nonradial case (see \cite{AN,D15,D19,DHR,FXC,KV10}). In the nonradial case, the critical solution is localized near some function $x(t)$. For NLS, by using the Galilean invariance, one can control the behavior of $x(t)$, which enables the contradiction argument via the virial identity. However, the fourth-order Schr\"{o}dinger equation \eqref{eq:4NLS} is not Galilean invariant. Due to this lack of the Galilean invariance, the local virial argument does not work for \eqref{eq:4NLS} in the nonradial case.

In the proof of Theorem \ref{thm:previous}, the hypothesis $L_{G,\omega}^*<m_{G,\omega}$ makes it possible to construct a critical solution localized near the origin.
To construct a critical solution, we apply the profile decomposition for $G$-invariant functions to a sequence optimizing $L_{G,\omega}^*$. 
In this decomposition, each profile $\phi^j$ is $G_{w^j}$-invariant for some $w^j \in \R^d$ and appears in the following form:
\begin{align*}
      \sum_{k=1}^{\# G w^j} \G_k^j \left[ \phi^j(x-x_n^j) \right]
      =: \psi_n^j (x),
\end{align*}
where $\G_k^j \in G$ are such that $G=\bigsqcup_{k=1}^{\# G w^j} \G_k^j G_{w^j}$ and $\psi_n^j$ is $G$-invariant.
By applying the concentration compactness argument, we can show that the optimizing sequence $u_{n,0}$ is consist of only one $\psi_n^1$. 
If $|x_n^1| \to \infty$, we can assume that $w^1 \in \mathbb{S}^{d-1} := \{ x \in \R^d \mid |x|=1 \}$ and $L_{G,\omega}^{*} = \lim_{n \to \infty} S_{\omega}(u_{n,0}) = (\# G w^1) S_{\omega}(\phi^1)$.
Then we have $S_{\omega}(\phi^1) < L_{G_{w^1},\omega}^{*}$ under the hypothesis $L_{G,\omega}^{*} < m_{G,\omega}$.
 This implies that the solution to \eqref{eq:4NLS} with initial data $\phi^1$ scatters, which leads to a contradiction.
If $x_n^1$ is bounded, we see that $w^1 = 0$ and $\psi_n^1 = \phi^1$.
In this case, we obtain a critical solution localized near the origin.

For the above reasons, Theorem \ref{thm:previous} proves $L_{G,\omega}^*\geq S_{\omega}(Q_{\omega})$ only under the hypothesis $L_{G,\omega}^*<m_{G,\omega}$.
On the other hand, by using the following theorem, the failure of the hypothesis derives a lower bound for the threshold $L_{G,\omega}^*$ that leads to Theorem \ref{thm:G-inv}.

\begin{thm} \label{thm:main}
Let $d \geq 1$ and $p$ satisfy \eqref{eq:intercritical}.
If $u_0 \in H^2(\R^d)$ satisfies
\begin{align}\label{eq:fraction_gs}
      S_{\omega} (u_0) \leq \left( \tfrac14 \right)^{\frac{2}{p-1}} S_{\omega} (Q_{\omega}) 
      \ \ \mbox{for some}\ \ \omega > 0,
      \ \ \mbox{and}\ \ 
      K(u_0) \geq 0,
\end{align} 
then there exists a unique global solution $u\in C(\R, H^2(\R^d))$ to \eqref{eq:4NLS} with initial data $u(0)=u_0$. Moreover, $u$ scatters forward and backward in time.
\end{thm}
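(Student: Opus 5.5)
The plan is to follow the concentration--compactness / rigidity scheme of Kenig--Merle, as adapted by Pausader--Shao. The fraction $(1/4)^{\frac{2}{p-1}}$ enters only through a strengthened coercivity estimate, which is needed in the rigidity step. That step uses a Morawetz-type identity valid without any symmetry assumption, in which the nonlinear term carries a worse constant (a factor $4$) than in the virial functional $K$.

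\textbf{Step 1 (coercivity).} I first prove the following. If $S_\omega(u_0)\le 4^{-\frac{2}{p-1}}S_\omega(Q_\omega)$ and $K(u_0)\ge0$, then the solution is global, $\sup_t\|u(t)\|_{H^2}<\infty$, and there is $\delta>0$ such that, uniformly in $t$,
\begin{align*}
      K_4(u(t)) := 2\|\Delta u(t)\|_{L^2}^2-4\cdot\tfrac{d(p-1)}{2(p+1)}\|u(t)\|_{L^{p+1}}^{p+1}\ \ge\ \delta\,\|\Delta u(t)\|_{L^2}^2 .
\end{align*}
The idea is to rescale the amplitude. Put $v=4^{\frac{1}{p-1}}u$. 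The quadratic terms of $S_\omega$ and $K$ gain the factor $4^{\frac{2}{p-1}}$, while the $L^{p+1}$ term gains $4^{\frac{p+1}{p-1}}=4\cdot4^{\frac{2}{p-1}}$. Hence
\begin{align*}
      K(v)=4^{\frac{2}{p-1}}K_4(u), \qquad S_\omega(v)\le 4^{\frac{2}{p-1}}S_\omega(u)\le S_\omega(Q_\omega).
\end{align*}
The map $\lambda\mapsto\lambda u$ for $\lambda\in[1,4^{\frac1{p-1}}]$ is continuous. Along it, $S_\omega(\lambda u)<S_\omega(Q_\omega)$ except possibly at the endpoint, and $K(\lambda u)$ cannot vanish for $u\neq0$ by \eqref{eq:def_gs_2}. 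Since $K(u)\ge0$, I get $K(v)\ge0$, i.e.\ $K_4(u)\ge0$. The quantitative gap $\delta$ comes from the standard variational argument of Lemma \ref{lem:invariant_set}, applied to $v$ with slack. The boundary case of equality in \eqref{eq:fraction_gs} needs a short separate remark: there $S_\omega(v)<S_\omega(Q_\omega)$ unless $u$ is special, and the strict inequality can be recovered by a small perturbation together with Proposition \ref{prop:stability}.

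\textbf{Step 2 (critical element).} Assume Theorem \ref{thm:main} fails. Then $L^*_\omega<4^{-\frac{2}{p-1}}S_\omega(Q_\omega)$ is impossible to improve, in the sense that scattering fails at some level $L_c\le4^{-\frac{2}{p-1}}S_\omega(Q_\omega)$. I then run the usual machinery on a minimizing sequence: the linear profile decomposition in $H^2$ (translations only, since there is no Galilean symmetry), the small data theory (Proposition \ref{prop:small_data}), and long-time perturbation (Proposition \ref{prop:stability}). This produces a minimal non-scattering solution $u_c$. Its orbit $\{u_c(t,\cdot+x(t))\}$ is precompact in $H^2$ for some path $x(t)$, and $u_c$ satisfies the conclusion of Step 1 for all $t$. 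Here the loss of the Galilean transformation does not matter, because nothing is assumed about $x(t)$.

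\textbf{Step 3 (rigidity, the main obstacle).} I expect this to be the hardest step. Since $x(t)$ is uncontrolled, the localized virial argument is unavailable. Instead I will use an interaction Morawetz estimate for $\Delta^2$, as in Pausader and Miao--Xu--Zhang, in the tensor-product form used by Pausader--Shao. For $\dot H^2$-subcritical data it gives
\begin{align*}
      \int_I \mathcal{M}(t)\,dt \lesssim \sup_{t\in I}\|u\|_{L^2}^2\|u\|_{H^2}^2 ,
\end{align*}
where the right-hand side is bounded uniformly in $I$. The integrand $\mathcal{M}(t)$ is a sum of a positive bilinear term and the focusing potential term. The task is to write $\mathcal{M}(t)$, after localization around the concentration point and truncation of the weight at a large radius $R$, as
\begin{align*}
      \mathcal{M}(t)\ \gtrsim\ \|u\|_{L^2}^2\,K_4\big(\chi_R(\cdot-x(t))u(t)\big)-o_R(1).
\end{align*}
The factor $4$ is exactly the constant by which the nonlinear term of the fourth-order interaction Morawetz identity is worse than the one in $K$. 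Compactness lets the cutoff error be $o_R(1)$ uniformly in $t$. Step 1 then makes $\mathcal{M}(t)\gtrsim\delta\|u_c\|_{L^2}^2\|\Delta u_c\|_{L^2}^2\gtrsim1$, uniformly in $t$. Integrating over $[0,T]$ contradicts the uniform upper bound as $T\to\infty$.

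The delicate part is this step: tracking the error terms from $\Delta^2$, which involve third derivatives of the weight, and checking that the coefficient of the potential term is exactly $4\cdot\frac{d(p-1)}{2(p+1)}$. If the constant turns out worse, I would redo Step 1 with the corresponding power. Global existence and scattering of $u$ then follow from Step 1 and Proposition \ref{prop:H^2-scattering}.
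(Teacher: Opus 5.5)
Your Steps 1 and 2 are sound. The scaling identity $K(4^{\frac{1}{p-1}}u)=4^{\frac{2}{p-1}}K_4(u)$ is correct; applying Lemma \ref{lem:coercivity} to $4^{\frac{1}{p-1}}u$ even gives $K_4(u)>2(1-4\kappa)\|\Delta u\|_{L^2}^2$ with $\kappa=(S_\omega(u)/S_\omega(Q_\omega))^{\frac{p-1}{2}}$. Step 2 is the paper's Section \ref{sec:concentration_compactness}.

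The gap is Step 3, which is where all the difficulty lies. The inequality $\mathcal{M}(t)\gtrsim\|u\|_{L^2}^2K_4(\chi_R(\cdot-x(t))u)-o_R(1)$ is postulated, not derived. Your fallback (``if the constant turns out worse, redo Step 1'') shows the constant $4$ is conjectured rather than computed, and no such identity is available.

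To see $\|\Delta u\|_{L^2}^2$ and $\|u\|_{L^{p+1}}^{p+1}$ with constant weights, and to make the cutoff errors $o_R(1)$ via compactness, you need the virial weight $\Psi_R$ with $\pa_{kl}\Psi_R=\delta_{kl}$ on $|x-y|\le R$. But then the contribution of $\pa_t|u(t,y)|^2$ to the interaction functional is (see \eqref{eq:est_V1})
\[
8P(u)\cdot\im\int_{\R^d}\Delta\overline{u}(t,x)\nabla u(t,x)\,dx+o_R(1),
\]
a term with no sign. For NLS one removes it with a Galilean boost, but \eqref{eq:4NLS} has no Galilean invariance, and your plan never addresses this term. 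With a Morawetz-type weight such as $|x-y|$ instead, the kinetic part only controls tangential second derivatives weighted by $|x-y|^{-1}$, while the focusing potential term carries the weight $\Delta\Psi=(d-1)|x-y|^{-1}$. Nothing of the form $\|u\|_{L^2}^2K_4(\chi_Ru)$ can be extracted from that.

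Note also that if $P(u)=0$, Corollary \ref{cor:zero_mom} already gives a contradiction at \emph{every} level below $S_\omega(Q_\omega)$. So the fraction $(\tfrac14)^{\frac{2}{p-1}}$ must come entirely from the case $P(u)\neq0$, which is exactly the case your argument skips.

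The missing idea is the paper's treatment of $P(u)=\bm{p}(u)e_1\neq0$.
\begin{itemize}
\item The virial identity is used only to obtain the averaged relation $M(u)\langle K(u)\rangle=2\bm{p}(u)\langle C(u)\rangle$, where $C(u)=\im\int\Delta u\,\pa_1\overline{u}\,dx$.
\item The phase $e^{iXx_1}$ is then used as a test function, not a symmetry. One has $K(e^{iXx_1}u)=K(u)+8C(u)X+12G(u)X^2+8\bm{p}(u)X^3+2M(u)X^4$, whose linear coefficient is exactly the quantity in the momentum term.
\item Since $e^{iXx_1}u$ has uncontrolled action, one needs the modified coercivity of Lemma \ref{lem:kappa}: $K(e^{iXx_1}u)>\tfrac{1-\kappa}{\kappa}(2\|\Delta u\|_{L^2}^2-K(u))$ for all $X$. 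It is obtained by applying Lemma \ref{lem:coercivity} to whichever of $u$, $e^{iXx_1}u$ has the smaller $\|\Delta\cdot\|_{L^2}$. Your Step 1 is no substitute for this.
\item Averaging in time, taking $X=-(\langle K(u)\rangle/2M(u))^{1/4}$, and using $\langle G(u)\rangle\le\sqrt{\tfrac12\langle D(u)\rangle M(u)}$ gives $6(\Lambda-1)\ge\tfrac{1-\kappa}{\kappa}(\Lambda^2-1)$ with $\Lambda>1$.
\item Hence $\tfrac{1-\kappa}{\kappa}<3$, i.e.\ $L_\omega^*>(\tfrac14)^{\frac{2}{p-1}}S_\omega(Q_\omega)$.
\end{itemize}
The $4$ thus comes from the coefficients of this quartic polynomial, not from any Morawetz constant.
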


\begin{rem}
Theorem \ref{thm:main} is equivalent to that $L_{\omega}^*$ is strictly larger than $(\frac14)^{\frac{2}{p-1}}S_{\omega}(Q_{\omega})$ for all $\omega>0$.
\end{rem}

Although the bound on action in the condition \eqref{eq:fraction_gs} is not optimal for scattering, we can prove Theorem \ref{thm:G-inv} by using Theorem \ref{thm:previous} and Theorem \ref{thm:main}. \\

\noindent
{\bf Proof of Theorem \ref{thm:G-inv}.}
If $L_{G,\omega}^*<m_{G,\omega}$, then by Theorem \ref{thm:previous}, we have $L_{G,\omega}^*\geq S_{\omega}(Q_{\omega})$.
On the other hand, if $L_{G,\omega}^*=m_{G,\omega}$, by Theorem \ref{thm:main}, we obtain
\begin{align*}
      L_{G,\omega}^*
     = m_{G,\omega}
     \geq \left( \inf_{|x|=1} \# G x \right) L_{\omega}^*
     > \left( \inf_{|x|=1} \# G x \right) \left( \tfrac14 \right)^{\frac{2}{p-1}} S_{\omega}(Q_{\omega}).
\end{align*}
Therefore, if $\inf_{|x|=1} \# G x \geq 4^{\frac{2}{p-1}}$, we have $L_{G,\omega}^*\geq S_{\omega}(Q_{\omega})$ whether $L_{G,\omega}^* < m_{G,\omega}$ holds or not. This completes the proof of Theorem \ref{thm:G-inv}.
\qed \\

Let us give some explanation about the outline of the proof of Theorem \ref{thm:main}. The proof follows the argument analogous to that by Pausader--Shao \cite{PS} for the $L^2$-critical fourth-order Schr\"{o}dinger equation, namely, \eqref{eq:4NLS} with $p=1+8/d$. When $d\geq5$, they proved that if $u_0\in L^2(\R^d)$ satisfies 
\begin{align}\label{eq:mass-critical}
      M(u_0) \leq \left( \tfrac14 \right)^{\frac{d}{4}} M(Q),
\end{align}
where $Q$ is a ground state of \eqref{eq:SW} with $\omega=1$, then the solution $u$ with initial data $u_0$ globally exists and scatters. In the proof, they estimate the mass ratio between $Q$ and the critical non-scattering solution $u$ obtained by the concentration compactness argument. For any real-valued function $\phi(x)$, the sharp Gagliardo--Nirenberg inequality yields that
\begin{align}\label{eq:sharp_GN}
      2 E ( e^{i \phi (x)} u )
      \geq \left\{ \left( \tfrac{M(Q)}{M(u)} \right)^{\frac4d} - 1 \right\} 
      \left( \|\Delta u\|_{L^2}^2 - 2 E(u) \right).
\end{align}
By applying the interaction virial identity to the critical solution $u$ and by choosing a suitable $\phi(x)$, Pausader--Shao obtained the estimate
\begin{align*}
      \left\langle 2 E (e^{i \phi (x)} u) \right\rangle
      \leq 6 \left( \sqrt{2 E(u) \left\langle \|\Delta u\|_{L^2}^2 \right\rangle} - 2 E(u) \right),
\end{align*}
where $\langle F \rangle$ represents the large time average of $F(t)$. Thus, it follows that
\begin{align*}
      \left( \tfrac{M(Q)}{M(u)} \right)^{\frac4d}
      \leq \tfrac{\left\langle 2E(e^{i \phi(x)} u) \right\rangle}
      {\left\langle \|\Delta u\|_{L^2}^2 \right\rangle - 2 E(u)} + 1
      <\tfrac62 + 1 = 4,
\end{align*}
where $\langle \|\Delta u\|_{L^2}^2 \rangle > 2E(u)$ was used in the second inequality.
This implies that the threshold of mass for scattering is larger than the right hand side of \eqref{eq:mass-critical}.

In the proof of Theorem \ref{thm:main}, we use the inequality
\begin{align}\label{eq:var_ineq}
      K ( e^{i \phi(x)} u )
      \geq \left\{ \left( \tfrac{S_{\omega}(Q_{\omega})}{S_{\omega}(u)} \right)^{\frac{p-1}{2}} - 1 \right\}
      \left( 2 \|\Delta u\|_{L^2}^2 - K(u) \right),
\end{align}
which holds if $K(u)>0$, to estimate the ratio of actions between the ground state $Q_{\omega}$ and the critical solution $u$ such that $S_{\omega}(u)=L_{\omega}^*$. We remark that multiplying by $e^{i\phi(x)}$ does not change the mass $M(u)$ but changes the action $S_{\omega}(u)$. Hence, to obtain \eqref{eq:var_ineq}, it is not enough to simply apply variational inequalities to $e^{i\phi(x)}u$, as to obtain \eqref{eq:sharp_GN}. To get \eqref{eq:var_ineq}, we apply the inequality in Lemma \ref{lem:coercivity} to $u$ if 
$\|\Delta u\|_{L^2}^2 \leq \|\Delta(e^{i \phi(x)} u)\|_{L^2}^2$,
and apply it to $e^{i\phi(x)}u$ if otherwise (see the proof of Lemma \ref{lem:kappa} for the details).

The rest of the paper is organized as follows: 
In Section \ref{sec:preli}, we recall the Strichartz type estimates for the linear fourth-order Schr\"{o}dinger equation. We also give a small data theory and a long-time perturbation theory. 
In Section \ref{sec:variational_analysis}, we study the ground state $Q_{\omega}$ and give some lemmas. 
In Section \ref{sec:concentration_compactness}, we construct a critical non-scattering solution by applying the concentration compactness argument. 
In Section \ref{sec:virial}, we establish the localized interaction virial identity.
Finally, in Section \ref{sec:proof}, we complete the proof of Theorem \ref{thm:main}.

\section{Preliminaries}\label{sec:preli}

We consider the linear fourth-order Schr\"{o}dinger equation
\begin{equation}\label{eq:linear4S}
      \left\{
      \begin{split}
            &i\partial_t u-\Delta^2 u=h(t,x), \ \ \ &&(t,x)\in\R\times\R^d, \\
            &u(0,x)=u_0(x), \ \ \ &&x\in\R^d.
      \end{split}
      \right.
\end{equation}
The linear propagator associated with \eqref{eq:linear4S} is given by
\begin{align*}
      e^{-it\Delta^2}f:=\F^{-1}\left[e^{-it|\xi|^4}\hat{f}(\xi)\right],
\end{align*}
where $\F f=\hat{f}$ is the Fourier transform of $f$ and $\F^{-1}f$ is the inverse Fourier transform of $f$. Since $|e^{-it|\xi|^4}|=1$,
\begin{align}\label{eq:L^2}
      \|e^{-it\Delta^2}f\|_{L^2}=\|f\|_{L^2}.
\end{align}
Ben-Artzi--Koch--Saut \cite{BKS} studied the dispersive estimates for Schr\"{o}dinger equations with fourth-order dispersion. Applying their result, we have
\begin{align}\label{eq:L^infty}
      \|e^{-it\Delta^2}f\|_{L^{\infty}}\lesssim |t|^{-\frac{d}{4}}\|f\|_{L^1}
\end{align}
and 
\begin{align}\label{eq:stationary_phase}
      \|\pa^{\alpha}e^{-it\Delta^2}f\|_{L^{\infty}}\lesssim |t|^{-\frac{d}{2}}\|f\|_{L^1},
\end{align}
for all $\alpha\in\Z^d$ such that $|\alpha|=d$. Interpolating \eqref{eq:L^2} and \eqref{eq:L^infty}, for $2\leq r\leq\infty$,
\begin{align}\label{eq:dispersive}
      \|e^{-it\Delta^2}f\|_{L^r}\lesssim |t|^{-\frac{d}{4}(1-\frac{2}{r})}\|f\|_{L^{r^{\prime}}}.
\end{align}

From the above dispersive estimates, we obtain the following two types of Strichartz estimate.

\begin{lem}[Strichartz estimates for biharmonic admissible pairs]\label{lem:Strichartz_B}
Let $d\geq1$. Suppose that pairs $(q_1,r_1)$ and $(q_2,r_2)$ satisfy
\begin{align}\label{eq:B-admissible}
      2\leq q_j,r_j\leq\infty, \ \ \ (q_j,r_j,d)\neq(2, \infty, 4)
      \ \ \ \mbox{and}\ \ \ \tfrac{4}{q_j}+\tfrac{d}{r_j}=\tfrac{d}{2}, \ \ \ \mbox{for}\ j=1,2.
\end{align}
Then for any $I\subset\R$,
\begin{align*}
      \|u\|_{L_t^{q_1}L_x^{r_1}(I\times\R^d)}
      \lesssim \|u_0\|_{L_x^2(\R^d)}+\|h\|_{L_t^{q_2^{\prime}}L_x^{r_2^{\prime}}(I\times\R^d)}.
\end{align*}
\end{lem}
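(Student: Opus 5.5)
The plan is to follow the standard $TT^*$ route (Ginibre--Velo, Keel--Tao), with the dispersive estimate \eqref{eq:dispersive} and the unitarity \eqref{eq:L^2} as the only inputs. By Duhamel's formula,
\begin{align*}
      u(t) = e^{-it\Delta^2}u_0 - i\int_0^t e^{-i(t-s)\Delta^2}h(s)\,ds .
\end{align*}
It therefore suffices to prove two bounds: the homogeneous estimate $\|e^{-it\Delta^2}u_0\|_{L_t^{q_1}L_x^{r_1}}\lesssim\|u_0\|_{L^2}$, and the retarded inhomogeneous estimate from $L_t^{q_2'}L_x^{r_2'}$ into $L_t^{q_1}L_x^{r_1}$.

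The setting of Keel--Tao is an operator family $U(t)=e^{-it\Delta^2}$ with two properties. The first is $\|U(t)f\|_{L^2}\le C\|f\|_{L^2}$, which is \eqref{eq:L^2}. The second is the decay $\|U(t)U(s)^*g\|_{L^\infty}\lesssim|t-s|^{-\sigma}\|g\|_{L^1}$ with $\sigma=d/4$. This follows from \eqref{eq:L^infty}, because $U(t)U(s)^*=e^{-i(t-s)\Delta^2}$. Their abstract theorem then gives the homogeneous estimate, the dual estimate, and the retarded inhomogeneous estimate for every pair satisfying $2/q+2\sigma/r=\sigma$. Setting $\sigma=d/4$, this condition reads $2/q+d/(2r)=d/4$, which is exactly $4/q+d/r=d/2$. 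Keel--Tao also require $(q,r,\sigma)\neq(2,\infty,1)$, i.e.\ the excluded case is $\sigma=1$, which means $d=4$; this is precisely the exclusion $(2,\infty,4)$ in \eqref{eq:B-admissible}.

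For the reader who wants the direct argument in the non-endpoint range $q>2$, I would proceed as follows. The $TT^*$ principle reduces the homogeneous estimate to bounding $\int e^{-i(t-s)\Delta^2}F(s)\,ds$ in $L_t^{q}L_x^{r}$ in terms of $\|F\|_{L_t^{q'}L_x^{r'}}$. By \eqref{eq:dispersive}, the inner $L_x^r$ norm is controlled by $|t-s|^{-\frac d4(1-\frac2r)}\|F(s)\|_{L^{r'}}$. The admissibility condition makes this exponent equal to $2/q<1$, so the Hardy--Littlewood--Sobolev inequality in time closes the estimate. Mixed pairs $(q_1,r_1)\ne(q_2,r_2)$ are obtained by composing the homogeneous estimate with its dual. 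The retarded truncation $s<t$ is handled by the Christ--Kiselev lemma, which is valid because $q_2'<q_1$ whenever both exponents exceed $2$.

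The only delicate point is the endpoint $q=2$, which is admissible when $d\ge5$; the case $d\le3$ needs no endpoint treatment. At this endpoint HLS fails, and Christ--Kiselev does not apply when both $q_j=2$. I would not reprove this case. Instead I would invoke Keel--Tao's bilinear interpolation argument, whose hypotheses are verified verbatim by \eqref{eq:L^2} and \eqref{eq:L^infty}. Restricting to a subinterval $I\subset\R$ is harmless: we extend $h$ by zero outside $I$, and the Duhamel formula is unaffected.
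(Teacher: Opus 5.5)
Your proposal is correct and is essentially the paper's proof. The paper also feeds the fourth-order dispersive decay, together with unitarity, into the abstract Keel--Tao theorem with $\sigma=d/4$; your identification of the admissibility condition $4/q+d/r=d/2$ and of the excluded endpoint $(2,\infty,4)$ matches, and your extra $TT^*$/Hardy--Littlewood--Sobolev/Christ--Kiselev sketch for $q>2$ is a consistent elaboration of the non-endpoint case (it tacitly takes $0\in I$, as the statement itself does).
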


\noindent
{\bf Proof.} We can prove Lemma \ref{lem:Strichartz_B} by using \eqref{eq:dispersive} and the theorem in Keel--Tao \cite{KT}.
\qed \\

\begin{lem}[Strichartz estimates for Schr\"{o}dinger admissible pairs]\label{lem:Strichartz_S}
Let $d\geq1$. Suppose that pairs $(q_1,r_1)$ and $(q_2,r_2)$ satisfy
\begin{align}\label{eq:S-admissible}
      2\leq q_j,r_j\leq\infty, \ \ \ (q_j,r_j,d)\neq(2, \infty, 2)
      \ \ \ \mbox{and}\ \ \ \tfrac{2}{q_j}+\tfrac{d}{r_j}=\tfrac{d}{2}, \ \ \ \mbox{for}\ j=1,2.
\end{align}
Then for any $I\subset\R$,
\begin{align*}
      \|u\|_{L_t^{q_1}L_x^{r_1}(I\times\R^d)}
      \lesssim \||\nabla|^{-\frac{2}{q_1}}u_0\|_{L_x^2(\R^d)}
      +\||\nabla|^{-\frac{2}{q_1}-\frac{2}{q_2}}h\|_{L_t^{q_2^{\prime}}L_x^{r_2^{\prime}}(I\times\R^d)}.
\end{align*}
\end{lem}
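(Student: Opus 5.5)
The estimate in Lemma \ref{lem:Strichartz_S} follows the Schr\"odinger-admissible Strichartz estimates of Pausader for the fourth-order group. The plan is to reduce the problem to a frequency-localized dispersive estimate, apply the abstract Keel--Tao machinery piece by piece, and then sum the pieces with Littlewood--Paley theory.

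\textbf{Step 1: localized dispersive estimate.} Let $P_N$ be Littlewood--Paley projections to $|\xi|\sim N$, with $N$ dyadic. By scaling it suffices to treat $N=1$. For $N=1$, stationary phase applies to the oscillatory integral $\int e^{ix\cdot\xi - it|\xi|^4}\chi(\xi)\,d\xi$: the Hessian of $|\xi|^4$ is nondegenerate on the annulus, with eigenvalues comparable to $1$. This gives
\begin{align*}
\|e^{-it\Delta^2}P_1 f\|_{L^\infty}\lesssim |t|^{-\frac d2}\|f\|_{L^1}
\end{align*}
for $|t|\ge1$. For $|t|\le1$ the trivial bound $\lesssim\|f\|_{L^1}$ holds, so in all cases the decay rate is $(1+|t|)^{-d/2}$; this is the content of \eqref{eq:stationary_phase} combined with Bernstein. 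Rescaling to general $N$ gives
\begin{align*}
\|e^{-it\Delta^2}P_N f\|_{L^\infty}\lesssim N^{d}(1+N^4|t|)^{-\frac d2}\|f\|_{L^1}\le N^{-d}|t|^{-\frac d2}\|f\|_{L^1}.
\end{align*}

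\textbf{Step 2: apply Keel--Tao.} Apply Keel--Tao \cite{KT} to the family $U(t)=e^{-it\Delta^2}P_N$. It has energy bound $1$ and decay $N^{-d}|t|^{-d/2}$, i.e.\ Schr\"odinger-type decay with $\sigma=d/2$. The abstract theorem then yields, for Schr\"odinger-admissible pairs (with the endpoint excluded when $d=2$),
\begin{align*}
\|e^{-it\Delta^2}P_Nf\|_{L^{q}_tL^{r}_x}\lesssim (N^{-d})^{\frac12-\frac1r}\|f\|_{L^2}.
\end{align*}
Using $\frac d2-\frac dr=\frac2q$, the factor is $N^{-\frac{2}{q}}$. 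This gives the claimed homogeneous gain $|\nabla|^{-2/q_1}$ at frequency $N$. The $TT^*$ retarded estimate gives the inhomogeneous version at frequency $N$, with factor $N^{-2/q_1-2/q_2}$.

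\textbf{Step 3: sum over frequencies.} To pass to general $u$, use the square function estimate $\|u\|_{L^r}\lesssim\|(\sum_N|P_Nu|^2)^{1/2}\|_{L^r}$ for $r<\infty$ and Minkowski's inequality, which applies since $q,r\ge2$. This gives
\begin{align*}
\|u\|_{L^qL^r}\lesssim\Big(\sum_N\|P_Nu\|_{L^qL^r}^2\Big)^{1/2}.
\end{align*}
Insert the localized bounds, replacing $P_N$ by $\tilde P_N P_N$ with a fattened projection $\tilde P_N$ so that the powers of $N$ become $|\nabla|^{-s}$. On the right-hand side, use Plancherel orthogonality for the $L^2$ term. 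For the dual term, use the reverse square-function/Minkowski inequality, which holds because $q_2',r_2'\le2$.

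\textbf{Main obstacle.} The delicate point is the endpoint $q=2$ and, more generally, the inhomogeneous term with two different pairs. Step 2 also needs the frequency-localized bilinear argument of Keel--Tao rather than just the homogeneous estimate. For $d\ge3$, the endpoint $(2,\frac{2d}{d-2})$ is covered by Keel--Tao since $\sigma=d/2>1$. For $d=1$, only non-endpoint pairs occur, and there Christ--Kiselev or Keel--Tao suffices. I would first check that the truncated bound $N^{-d}|t|^{-d/2}$ is uniform enough to feed the Keel--Tao theorem after rescaling time by $N^4$; this reduces everything to the $N=1$ case.
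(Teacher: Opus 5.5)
Your route differs from the paper's. The paper feeds the unlocalized derivative bound \eqref{eq:stationary_phase} directly into Keel--Tao. You instead apply Keel--Tao to each $L^2$-bounded piece $e^{-it\Delta^2}P_N$ (decay constant $N^{-d}$, $\sigma=d/2$) and resum with Littlewood--Paley. That part is sound. Your homogeneity bookkeeping $N^{-d(\frac12-\frac1r)}=N^{-\frac2q}$ and $N^{-\frac2{q_1}-\frac2{q_2}}$ is correct, and it makes the endpoint $q=2$, $d\ge3$, a verbatim application of the abstract theorem.

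\textbf{The gap is in Step 3.} The statement covers $d=1$, and there $(q,r)=(4,\infty)$ is Schr\"odinger admissible; only $(2,\infty)$ in $d=2$ is excluded. Both directions of your summation fail for this pair:
\begin{itemize}
\item If $r_1=\infty$, you cannot bound $\|u\|_{L^\infty_x}$ by $\|(\sum_N|P_Nu|^2)^{1/2}\|_{L^\infty_x}$.
\item If $r_2=\infty$, you need Littlewood--Paley in $L^{r_2'}=L^1$.
\end{itemize}
Summing the dyadic bounds with the triangle inequality only gives the estimate with the strictly stronger Besov norm $\dot B^{-1/2}_{2,1}$ in place of $\||\pa_x|^{-1/2}u_0\|_{L^2}$. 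You also cannot rebuild an unlocalized kernel bound from your localized ones. Each block of $|\pa_x|e^{-it\pa_x^4}P_N$ with $N^4|t|\ge1$ is bounded only by $\sim|t|^{-1/2}$, and there are infinitely many such blocks. You wrote ``for $r<\infty$'' yourself but never returned to this case.

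\textbf{How to close it.} You need an unlocalized estimate for this one pair, which is what \eqref{eq:stationary_phase} supplies in the paper's argument. Take the stationary-phase bound $\sup_{x}\bigl|\int_{\R}e^{i(x\xi-t\xi^4)}|\xi|\,d\xi\bigr|\lesssim|t|^{-1/2}$. This is the $|\pa_x|$-version of \eqref{eq:stationary_phase}, proved by the same type of argument, as in Kenig--Ponce--Vega. Then $TT^*$ and one-dimensional Hardy--Littlewood--Sobolev in time give $\||\pa_x|^{1/2}e^{-it\pa_x^4}f\|_{L^4_tL^\infty_x}\lesssim\|f\|_{L^2}$. Every admissible pair in $d=1$ has $q\ge4$, so $q_1>q_2'$ always holds. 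Hence the inhomogeneous estimates in which $(4,\infty)$ appears as either pair follow from the homogeneous ones by duality and the Christ--Kiselev lemma. All remaining cases are covered by your argument.
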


\noindent
{\bf Proof.} Lemma \ref{lem:Strichartz_S} follows from \eqref{eq:stationary_phase} and the theorem in Keel--Tao \cite{KT}.
\qed \\

From the above Strichartz estimates, we have the following local well-posedness of \eqref{eq:4NLS} for $H^2$-initial data.

\begin{prop}[Local well-posedness in $H^2$]\label{prop:LWP}
Let $d\geq1$. Suppose 
\begin{equation*}
      \left\{
      \begin{split}
            &\ p\geq2 \ \ \ &&\mbox{if}\ d=1,2, \\
            &\ 1+\tfrac{2}{d}\leq p< 1+\tfrac{8}{(d-4)_{+}} &&\mbox{if}\ d\geq3.
      \end{split}
      \right.
\end{equation*}
Then, for any $u_{0}\in H^2(\R^d)$, there exist $T_{*}, T^{*}\in(0,\infty)$ and a unique solution $u$ to \eqref{eq:4NLS} with initial data $u(0)=u_0$ such that
\begin{align*}
      u\in C((-T_{*}, T^{*}), H^2(\R^d))\cap L_{loc}^q((-T_{*}, T^{*}), W^{2,r}(\R^d))
\end{align*}
for all $(q,r)$ satisfying \eqref{eq:B-admissible}. Furthermore, we can choose $T_{*}$ and $T^{*}$ to be non-increasing functions of $\|u_0\|_{H^2}$. Moreover, $u$ satisfies $M(u(t))=M(u_0)$, $E(u(t))=E(u_0)$ and $P(u(t))=P(u_0)$ for all $t\in(-T_{*}, T^{*})$.
\end{prop}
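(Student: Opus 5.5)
The plan is the standard contraction mapping argument in Strichartz spaces built from Lemma \ref{lem:Strichartz_B}, with the admissible pairs chosen so that the nonlinearity $|u|^{p-1}u$ and two of its derivatives close in $H^2$-level norms.

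\emph{Setup.} Write \eqref{eq:4NLS} in Duhamel form
\begin{align*}
      \Phi(u)(t) := e^{-it\Delta^2}u_0 + i\int_0^t e^{-i(t-s)\Delta^2}\bigl(|u|^{p-1}u\bigr)(s)\,ds,
\end{align*}
and work in the space
\begin{align*}
      Y_T := \bigcap_{(q,r)\ \text{satisfying}\ \eqref{eq:B-admissible}} L_t^q W^{2,r}\bigl([-T,T]\times\R^d\bigr).
\end{align*}
In practice only finitely many pairs are needed: $(\infty,2)$ and one further pair $(q_0,r_0)$ adapted to $p$. Taking the metric from the $L_t^qL_x^r$ norms without derivatives, the ball of radius $2C\|u_0\|_{H^2}$ is complete, so derivatives of the difference $\Phi(u)-\Phi(v)$ never have to be estimated.

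\emph{Key nonlinear estimate.} Apply Lemma \ref{lem:Strichartz_B} to $\Phi(u)$ and to $\Delta\Phi(u)$ (the operator $\Delta$ commutes with the propagator). It then suffices to bound
\begin{align*}
      \bigl\|\langle\nabla\rangle^2(|u|^{p-1}u)\bigr\|_{L_t^{q_0'}L_x^{r_0'}}
      \lesssim T^{\theta}\,\|u\|_{Y_T}^{p}, \qquad \theta>0.
\end{align*}
The route is as follows. The fractional chain rule, or the explicit formula for two derivatives, which is legitimate when $p\ge2$ since the nonlinearity is then $C^2$, bounds the left side by $\|u\|^{p-1}_{L_t^aL_x^b}\|\langle\nabla\rangle^2u\|_{L_t^{q_0}L_x^{r_0}}$. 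Next, Sobolev embedding $W^{2,r_0}\subset L^b$ controls $\|u\|_{L_x^b}$. Finally, Hölder in time extracts $T^\theta$, because $p$ is $\dot H^2$-subcritical. The hypotheses $p\ge2$ (for $d=1,2$) and $p\ge 1+2/d$ (for $d\ge3$) are exactly what allows the second derivative to fall on $|u|^{p-1}$, or more generally allows a fractional chain rule in an $L^{r}$ space with admissible exponents. The upper bound $p<1+8/(d-4)_+$ supplies the positive power of $T$. For the difference estimate,
\begin{align*}
      \bigl||u|^{p-1}u-|v|^{p-1}v\bigr|\lesssim \bigl(|u|^{p-1}+|v|^{p-1}\bigr)|u-v|
\end{align*}
gives contraction in the derivative-free metric.

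\emph{Conclusion.} Choose $T$ small depending only on $\|u_0\|_{H^2}$. Then $\Phi$ is a contraction on the ball, which gives existence and uniqueness in $Y_T$. The dependence of $T$ on $\|u_0\|_{H^2}$ alone is what makes $T_*,T^*$ non-increasing in $\|u_0\|_{H^2}$, and the maximal solution is obtained by iteration. Continuity in $H^2$, and the $L^q_{loc}W^{2,r}$ bounds for every admissible pair, follow from Strichartz once the nonlinearity is known to lie in the dual space. Uniqueness in $C_tH^2$ follows by the same difference estimate. Conservation of $M$, $E$, $P$ is proved first for smooth data, for instance $H^4$ data where the formal computations are justified using persistence of regularity, and then extended by the continuous dependence given by the same contraction estimates.

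The main obstacle will be the exponent bookkeeping. One has to find a single admissible pair $(q_0,r_0)$ and Hölder exponents that work uniformly across the whole range of $p$ and $d$. Particular care is needed in low dimensions with large $p$: there one expects to simply use $H^2\subset L^\infty$ (for $d\le3$), or high-integrability embeddings, to put $|u|^{p-1}$ in $L^\infty_x$. In dimensions $d\ge5$ near the endpoint one will need a pair with $r_0$ close to $2d/(d-4)$.
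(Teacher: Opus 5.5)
There is a genuine gap: your key nonlinear estimate fails for $d\ge3$ and $1+\frac2d\le p<2$, because the missing ingredient is the Strichartz estimate with a gain of derivatives (Lemma \ref{lem:Strichartz_S}). The overall scheme is the standard one, and the paper itself simply refers to \cite{D} for it: Duhamel formulation, contraction in a Strichartz ball with a derivative-free metric, conservation laws by approximation. For $p\ge2$, in particular for $d=1,2$, your argument goes through using Lemma \ref{lem:Strichartz_B} alone.

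\textbf{Where the estimate fails.} The regime $d\ge3$, $1+\frac2d\le p<2$ is explicitly included in the statement, and it is not marginal here: for $d\ge12$ every $p$ allowed by \eqref{eq:intercritical} lies below $2$.
For $p<2$ the map $F(z)=|z|^{p-1}z$ is $C^1$ but not $C^2$. Its second derivatives are homogeneous of degree $p-2<0$, so $\partial^2F(u)$ contains $F''(u)(\nabla u)^2$, of size $|u|^{p-2}|\nabla u|^2$.
A negative power of $|u|$ cannot be controlled by H\"older and Sobolev in terms of $\|u\|_{Y_T}$. The usual fractional chain rule for a function whose derivative is only $(p-1)$-H\"older reaches $|\nabla|^sF(u)$ only for $s<p<2$.
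So your claim that $p\ge1+\frac2d$ is what lets the second derivative fall on $|u|^{p-1}$ is not correct. The bound $\|\langle\nabla\rangle^2F(u)\|_{L^{q_0'}_tL^{r_0'}_x}\lesssim T^\theta\|u\|_{Y_T}^p$ is unjustified in this regime. It cannot be repaired with Lemma \ref{lem:Strichartz_B} alone, since that estimate demands as many derivatives on the forcing as you want on the solution.

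\textbf{The missing ingredient.} The paper records Lemma \ref{lem:Strichartz_S} right before this proposition and uses it nowhere else. Take $(q_1,r_1)=(\infty,2)$ and the endpoint Schr\"odinger pair $(q_2,r_2)=(2,\frac{2d}{d-2})$, which exists exactly when $d\ge3$, and apply the lemma to $\Delta u$. This gives
\begin{align*}
      \Bigl\|\Delta\int_0^t e^{-i(t-s)\Delta^2}h(s)\,ds\Bigr\|_{L^\infty_tL^2_x}
      \lesssim \|\nabla h\|_{L^2_tL^{2d/(d+2)}_x},
\end{align*}
and similarly in $L^q_tL^r_x$ for biharmonic admissible $(q,r)$ (cf.\ \cite{P07}).
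Now only $\nabla F(u)=O(|u|^{p-1}|\nabla u|)$ has to be estimated, which needs just $F\in C^1$. Writing $\frac{d+2}{2d}=\frac{p-1}{a}+\frac1b$ with $\nabla u\in L^b$, $b\ge2$, forces $a\le d(p-1)$. On the other hand, $H^2$-level Strichartz norms control $u$ in $L^a$ only for $a\ge2$. The resulting condition $d(p-1)\ge2$ is the true origin of the hypothesis $p\ge1+\frac2d$. The rest of the bookkeeping (time exponents, and a factor $T^\theta$ from subcriticality) is as you describe. For $d=1,2$ the endpoint is unavailable, which is why the statement requires $p\ge2$ there and your two-derivative argument applies.

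\textbf{Conservation laws.} You invoke persistence of $H^4$ regularity to justify them. This needs more smoothness of $F$ than is available for non-odd $p$, and certainly for $p<2$. Instead, regularize, e.g.\ apply $(1-\epsilon\Delta)^{-1}$ to the equation or smooth the nonlinearity, and pass to the limit using the continuous dependence given by the contraction estimates.
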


\noindent
{\bf Proof.} See \cite{D}.
\qed\\

To prove the scattering result, we need the following Kato type inhomogeneous Strichartz estimate.

\begin{lem}[Kato type estimates]\label{lem:Kato}
Let $d\geq1$. Suppose that $q_1, q_2$ and $r$ satisfy
\begin{equation*}
      1< q_1, q_2<\infty, \ \ \ 
      2< r \left\{
            \begin{split}
                  &\leq\infty \ \ \ &&\mbox{if}\ 1\leq d\leq3, \\
                  &<\infty \ \ \ &&\mbox{if}\ d=4, \\
                  &<\tfrac{2d}{d-4} \ \ \ &&\mbox{if}\ d\geq5
            \end{split}
      \right.
\end{equation*}
and
\begin{align}\label{eq:scaling_condi}
      \tfrac{1}{q_1}+\tfrac{1}{q_2}=\tfrac{d}{4}\left(1-\tfrac{2}{r}\right).
\end{align}
Then  for any $I\subset\R$ and $t_0\in I$,
\begin{align}\label{eq:inhom_str}
      \left\|\int_{t_0}^t e^{-i(t-s)\Delta^2}F(s) ds\right\|_{L_t^{q_1}L_x^{r}(I\times\R^d)}
      \lesssim \|F\|_{L_t^{q_2^{\prime}}L_x^{r^{\prime}}(I\times\R^d)}.
\end{align}
\end{lem}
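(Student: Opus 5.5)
The plan is to follow the standard route for Kato-type inhomogeneous estimates outside the admissible range (Foschi, Vilela). We obtain a pointwise-in-time bound from the dispersive estimate \eqref{eq:dispersive} and then apply the Hardy--Littlewood--Sobolev inequality in time. Set $\sigma := \frac{d}{4}(1-\frac{2}{r})$. The restrictions on $r$ give $0<\sigma$, with $\sigma<1$ when $d\le 3$ or $r$ is not too large; the scaling condition \eqref{eq:scaling_condi} reads $\frac{1}{q_1}+\frac{1}{q_2}=\sigma$. First, by Minkowski and \eqref{eq:dispersive},
\begin{align*}
\left\|\int_{t_0}^t e^{-i(t-s)\Delta^2}F(s)\,ds\right\|_{L_x^r}
\lesssim \int_{\R} |t-s|^{-\sigma}\,\|F(s)\|_{L_x^{r'}}\,ds ,
\end{align*}
where $F$ is extended by zero outside $I$, and the truncation $s\in[t_0,t]$ is dropped since the integrand is nonnegative.

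Next we apply the one-dimensional Hardy--Littlewood--Sobolev inequality. For $0<\sigma<1$ the map $f\mapsto |t|^{-\sigma}*f$ is bounded $L^{a}(\R)\to L^{b}(\R)$ when $1<a<b<\infty$ and $\frac1b=\frac1a+\sigma-1$. Take $a=q_2'$ and $b=q_1$. Then the exponent relation becomes $\frac1{q_1}=1-\frac1{q_2}+\sigma-1$, which is exactly \eqref{eq:scaling_condi}. The conditions $1<q_1,q_2<\infty$ guarantee $1<a$ and $b<\infty$, and $a<b$ holds automatically since $\sigma<1$. This gives \eqref{eq:inhom_str} whenever $\sigma<1$, i.e. $r<\frac{2d}{d-4}$ for $d\ge5$ and $r$ finite for $d=4$ (where $\sigma<1$ iff $r<\infty$). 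For $d\le3$ we always have $\sigma\le \frac d4<1$, so $r=\infty$ is also allowed. The dimension restrictions in the statement are therefore precisely those making $\sigma<1$.

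The main obstacle is the endpoint $\sigma\ge1$, but it is excluded by the hypotheses. The only remaining concern is the case $q_2'\ge q_1$, which would break HLS. This cannot happen, because $\frac1{q_2'}-\frac1{q_1}=1-\sigma>0$. If one also wanted the time-truncated (retarded) form to hold with $t_0$ an endpoint rather than via the nonnegativity argument, the Christ--Kiselev lemma would be available since $q_2'<q_1$, but the pointwise bound above makes it unnecessary. One should finally check that HLS is applied on all of $\R$ with $F\mathbf{1}_I$ and then the result restricted to $I$. This yields the estimate with an implicit constant independent of $I$ and $t_0$.
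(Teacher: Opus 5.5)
Your proposal is correct and follows the paper's proof. Minkowski's inequality plus the dispersive estimate \eqref{eq:dispersive} reduce the left side of \eqref{eq:inhom_str} to a time convolution with the kernel $|t-s|^{-\frac{d}{4}(1-\frac{2}{r})}$, and the one-dimensional Hardy--Littlewood--Sobolev inequality from $L^{q_2'}$ to $L^{q_1}$, whose exponent relation is exactly \eqref{eq:scaling_condi}, finishes the argument. Your explicit check that the restrictions on $r$ are precisely what give $0<\frac{d}{4}(1-\frac{2}{r})<1$ fills in a detail the paper leaves implicit, and, as you note yourself, the Christ--Kiselev remark is not needed.
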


\noindent
{\bf Proof.} By using \eqref{eq:dispersive}, we have
\begin{align*}
      \left\| \int_{t_0}^{t}e^{-i(t-s)(\Delta^2-\mu\Delta)}F(s)ds \right\|_{L_x^r}
      \lesssim& \int_{t_0}^{t}\left\| e^{-i(t-s)\Delta^2}F(s) \right\|_{L_x^r}ds \notag \\
      \lesssim& \int_{t_0}^{t}|t-s|^{-\frac{d}{4}(1-\frac{2}{r})}\|F(s)\|_{L_x^{r^{\prime}}}ds.
\end{align*}
Hence, from the Hardy--Littlewood--Sobolev inequality and \eqref{eq:scaling_condi}, we obtain \eqref{eq:inhom_str}.
\qed \\

Throughout this paper, for $1+8/d<p<1+8/(d-4)_{+}$ we fix
\begin{align*}
      &r=p+1, &&q=\tfrac{8(p+1)}{d(p-1)}, \notag \\
      &q_1=\tfrac{4(p-1)(p+1)}{8-(d-4)(p-1)}, &&q_2=\tfrac{4(p-1)(p+1)}{(dp-4)(p-1)-8}.
\end{align*}
Then $(q,r)$ satisfies $\frac{4}{q}+\frac{d}{r}=\frac{d}{2}$ and $(q_1,r)$ and $(q_2,r)$ satisfy 
\begin{align*}
      \tfrac{4}{q_1}+\tfrac{d}{r}=\tfrac{4}{p-1}=\tfrac{d}{2}-s_c,\ \ \ 
      \tfrac{4}{q_2}+\tfrac{d}{r}=d-\tfrac{4}{p-1}=\tfrac{d}{2}+s_c.
\end{align*}
In particular, we have $\tfrac{1}{q_1}+\tfrac{1}{q_2}=\tfrac{d}{4}(1-\tfrac{2}{r})$. For any time interval $I\subset\R$, we define function space $X(I)$ with norm
\begin{align*}
      \|u\|_{X(I)}:= \|u\|_{L_t^{q_1}L_x^r(I\times\R^d)}.
\end{align*}
We also define
\begin{align*}
      \|F\|_{N(I)}:=\|F\|_{L_t^{q_2^{\prime}}L_x^{r^{\prime}}(I\times\R^d)}.
\end{align*}

\begin{prop}[Small data]\label{prop:small_data}
Let $d\geq1$, and let $p$ satisfy \eqref{eq:intercritical}. There exists small $\delta_{sd}>0$ such that the following holds. Let $I$ be a time interval which contains $t_0$. If $u_0\in H^2(\R)$ satisfies
\begin{align*}
      \|e^{-it\Delta^2}u_0\|_{X(I)}\leq\delta_{sd},
\end{align*}
then there exists a unique solution $u\in X(I)$ to \eqref{eq:4NLS} with initial data $u(t_0)=u_0$. Moreover,
\begin{align*}
      \|u\|_{X(I)}\leq 2\|e^{-it\Delta^2}u_0\|_{X(I)}.
\end{align*}
\end{prop}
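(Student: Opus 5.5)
The plan is a standard contraction mapping argument in the space $X(I)$, using the Kato type estimate (Lemma \ref{lem:Kato}) with the exponents $(q_1,q_2,r)$ fixed above. Consider the Duhamel map
\begin{align*}
      \Phi(u)(t) := e^{-i(t-t_0)\Delta^2}u_0 + i\int_{t_0}^t e^{-i(t-s)\Delta^2}\bigl(|u|^{p-1}u\bigr)(s)\,ds,
\end{align*}
and work on the closed set $B:=\{u\in X(I) : \|u\|_{X(I)}\le 2\|e^{-it\Delta^2}u_0\|_{X(I)}\}$ with the metric of $X(I)$. (Here the free evolution is understood as started at $t_0$, i.e.\ $e^{-i(t-t_0)\Delta^2}u_0$.)

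\textbf{Step 1: admissibility of the exponents.} First check that $(q_1,q_2,r)$ meets the hypotheses of Lemma \ref{lem:Kato}. The scaling relation \eqref{eq:scaling_condi} is already recorded in the text. Under \eqref{eq:intercritical} we have $r=p+1>2$, and $r<\tfrac{2d}{d-4}$ when $d\ge5$, since $p<1+\tfrac{8}{d-4}$ gives $p+1<\tfrac{2d}{d-4}$ with room to spare. Finally $1<q_1,q_2<\infty$: both denominators $8-(d-4)(p-1)$ and $(dp-4)(p-1)-8$ are positive exactly because $0<s_c<2$, and a short computation shows both exponents exceed $1$. This exponent bookkeeping is the only place needing care, and I expect no real obstacle.

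\textbf{Step 2: nonlinear estimate.} The key identity is $\tfrac{1}{q_2'} = \tfrac{p}{q_1}$ and $\tfrac{1}{r'}=\tfrac{p}{r}$. The second is immediate from $r=p+1$. The first is equivalent to $1-\tfrac{1}{q_2}=\tfrac{p}{q_1}$, i.e.\ $\tfrac{1}{q_2}+\tfrac{p}{q_1}=1$, which I will verify by direct substitution: the numerators combine to $(dp-4)(p-1)-8+p\bigl(8-(d-4)(p-1)\bigr)=4(p-1)(p+1)$. Hölder then gives
\begin{align*}
      \||u|^{p-1}u\|_{N(I)}\le \|u\|_{X(I)}^p,
\end{align*}
and similarly, via $\bigl||u|^{p-1}u-|v|^{p-1}v\bigr|\lesssim(|u|^{p-1}+|v|^{p-1})|u-v|$,
\begin{align*}
      \||u|^{p-1}u-|v|^{p-1}v\|_{N(I)}\lesssim(\|u\|_{X}^{p-1}+\|v\|_{X}^{p-1})\|u-v\|_{X}.
\end{align*}

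\textbf{Step 3: contraction.} Write $\eta:=\|e^{-it\Delta^2}u_0\|_{X(I)}\le\delta_{sd}$. Combining Lemma \ref{lem:Kato} with Step 2 gives $\|\Phi(u)\|_X\le \eta+C(2\eta)^p\le 2\eta$ and $\|\Phi(u)-\Phi(v)\|_X\le C(2\eta)^{p-1}\|u-v\|_X\le\tfrac12\|u-v\|_X$, provided $\delta_{sd}$ is chosen so that $C(2\delta_{sd})^{p-1}\le\tfrac12$. The Banach fixed point theorem then yields a unique solution in $B$ satisfying the stated bound.

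For uniqueness in all of $X(I)$, not just in $B$, I will use a continuity argument on subintervals. If $v\in X(I)$ is another solution, split $I$ into finitely many pieces on which $\|v\|_X$ and $\|u\|_X$ are small. The difference estimate then forces $u=v$ on each piece successively, starting from $t_0$.
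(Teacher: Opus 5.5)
Your proposal is correct and is the argument the paper intends: the paper's proof consists only of the remark that the result follows from Lemma \ref{lem:Kato} and the standard contraction mapping principle. Your exponent checks (notably $\tfrac{1}{q_2}+\tfrac{p}{q_1}=1$, which gives $\||u|^{p-1}u\|_{N(I)}\le\|u\|_{X(I)}^p$ by H\"older) and your subinterval argument for uniqueness in all of $X(I)$ supply exactly the omitted details, with two harmless wording slips: positivity of $(dp-4)(p-1)-8$ is implied by $s_c>0$ but not equivalent to it, and $p+1<\tfrac{2d}{d-4}$ is exactly equivalent to $p<1+\tfrac{8}{d-4}$, so there is no ``room to spare'' (only the strict inequalities are needed).
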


\noindent
{\bf Proof.} We can prove Proposition \ref{prop:small_data} by using Lemma \ref{lem:Kato} and applying the standard contraction mapping principle. 
\qed \\

To prove the scattering, we will use the following criterion.

\begin{prop}[$H^2$-scattering]\label{prop:H^2-scattering}
Let $d\geq1$, and let $p$ satisfy \eqref{eq:intercritical}. If $u\in C(\R,H^2(\R^d))$ is a global solution to \eqref{eq:4NLS} and satisfies
\begin{align*}
      \|u\|_{X(\R)}<\infty, \ \ \ \mbox{and}\ \ \ \|u\|_{L_t^{\infty}H_x^2(\R\times\R^d)}<\infty,
\end{align*}
then $u$ scatters forward and backward in time.
\end{prop}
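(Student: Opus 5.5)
The plan is to use the standard scattering criterion. Since $u$ is global with $\|u\|_{X(\R)}<\infty$ and bounded in $H^2$, we show that $e^{it\Delta^2}u(t)$ is Cauchy in $H^2$ as $t\to\pm\infty$. By Duhamel,
\begin{align*}
      e^{it\Delta^2}u(t) = u_0 + i\int_0^t e^{is\Delta^2}\bigl(|u|^{p-1}u\bigr)(s)\,ds .
\end{align*}
Scattering forward in time then reduces to proving
\begin{align*}
      \Bigl\|\int_{t_1}^{t_2} e^{is\Delta^2}\bigl(|u|^{p-1}u\bigr)(s)\,ds\Bigr\|_{H^2}\to 0
      \quad\mbox{as } t_1,t_2\to\infty .
\end{align*}
The backward case is identical. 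We then set $\phi^{+}:=\lim_{t\to\infty}e^{it\Delta^2}u(t)$, and unitarity of $e^{-it\Delta^2}$ on $H^2$ gives \eqref{eq:scattering}.

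\textbf{Step 1: upgrade to Strichartz control.} We show that $\|\langle\nabla\rangle^2 u\|_{L_t^{a}L_x^{b}(\R\times\R^d)}<\infty$ for every biharmonic admissible pair $(a,b)$ from \eqref{eq:B-admissible}. Split $\R$ (or $[0,\infty)$) into finitely many intervals $I_j$ on which $\|u\|_{X(I_j)}\le\eta$, with $\eta$ small. On each $I_j$, apply Lemma \ref{lem:Strichartz_B} to $u$ and to $\Delta u$. The nonlinearity is estimated in a dual admissible norm, for instance $L_t^{q'}L_x^{r'}$ with $(q,r)$ the fixed admissible pair. Hölder gives
\begin{align*}
      \bigl\|\langle\nabla\rangle^2(|u|^{p-1}u)\bigr\|_{L_t^{q'}L_x^{r'}(I_j)}
      \lesssim \|u\|_{X(I_j)}^{\theta(p-1)}\,
      \|u\|_{L_t^{\infty}H_x^2}^{(1-\theta)(p-1)}\,
      \|\langle\nabla\rangle^2 u\|_{L_t^{q}L_x^{r}(I_j)} ,
\end{align*}
for some $\theta\in(0,1]$. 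The factors $\|u\|_{L^\infty_tH^2_x}$ enter via Sobolev embedding. The fractional chain rule is applied to $\Delta(|u|^{p-1}u)$; if $p$ is small, one uses the Leibniz rule for two derivatives, noting $p>1+8/d\ge 1$.

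\textbf{Step 2: close the bootstrap.} Smallness of $\eta$, together with a continuity argument, gives a bound on $\|\langle\nabla\rangle^2 u\|_{L^q_tL^r_x(I_j)}$ on each interval. Summing over the finitely many intervals gives a global bound.

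\textbf{Step 3: Cauchy property.} Apply the dual homogeneous Strichartz estimate, that is, the $TT^*$ form of Lemma \ref{lem:Strichartz_B} with $h$ supported in $[t_1,t_2]$. This gives
\begin{align*}
      \Bigl\|\int_{t_1}^{t_2}e^{is\Delta^2}(|u|^{p-1}u)\,ds\Bigr\|_{H^2}
      \lesssim \|u\|_{X([t_1,t_2])}^{\theta(p-1)} C ,
\end{align*}
where $C$ collects the uniform bounds from Steps 1--2. The right-hand side tends to $0$ as $t_1,t_2\to\infty$, because $\|u\|_{X(\R)}<\infty$.

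\textbf{Main obstacle.} The key difficulty is the exponent bookkeeping in the nonlinear estimate of Step 1. We must find an admissible pair and an interpolation parameter $\theta>0$ such that Hölder and Sobolev embedding (using $s_c<2$) place the $p-1$ undifferentiated factors into a product of the $X$-norm and the $L^\infty_tH^2_x$ norm. We must also check that the fractional chain rule applies to $\Delta(|u|^{p-1}u)$ even when $p-1<1$, which is possible in high dimension. We would first try interpolating the $L^r$ exponent of $u$ between the $X$-norm and $\dot H^{s_c}\subset L^{\frac{d(p-1)}{4}}$-type bounds. If this fails for some range of $d$, the fallback is to use Lemma \ref{lem:Strichartz_S} to gain the needed exponents.
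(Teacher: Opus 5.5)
\textbf{Gap: Steps 1 and 3 need two derivatives on the nonlinearity, which fails for $p<2$.} Your architecture is the standard one: finitely many intervals with small $X$-norm, a bootstrap for $\langle\nabla\rangle^2u$ in Strichartz norms, then the Cauchy property of $e^{it\Delta^2}u(t)$ in $H^2$. The paper gives no argument of its own and refers to \cite{D}. The problem is that Steps 1 and 3 both place two derivatives on $F(u)=|u|^{p-1}u$ in a dual biharmonic Strichartz norm. Condition \eqref{eq:intercritical} permits $p<2$ for every $d\ge9$, and there this breaks down.

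For $d\le8$ you have $p>1+\frac8d\ge2$, and your bookkeeping does close with $\theta=1$. Indeed $\frac{1}{q'}=\frac{p-1}{q_1}+\frac1q$ and $\frac1{r'}=\frac pr$, and the term $|u|^{p-2}|\nabla u|^2$ is handled by $\|\nabla u\|_{L^r}^2\lesssim\|u\|_{L^r}\|\Delta u\|_{L^r}$.

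For $p<2$, neither of your remedies works. The Leibniz rule produces $F''(u)(\nabla u)^2$ with $|F''(u)|\sim|u|^{p-2}$, a negative power that H\"older cannot control. The fractional chain rule for the $(p-1)$-H\"older function $F'$ yields strictly fewer than $p<2$ derivatives in total.

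In fact the estimate is false. Take $u(t,x)=\chi(t)\,x_1\,\eta(x)$ with nonnegative $\chi,\eta\in C_0^\infty$ and $\eta=1$ near $0$. Every norm on your right-hand side is finite. Near $x=0$, however, $\Delta F(u)=\chi(t)^p\,p(p-1)|x_1|^{p-2}\operatorname{sgn}x_1$, which is not in $L^{r'}_x$ with $r'=\frac{p+1}{p}$ as soon as $p\le\sqrt2$; this is allowed for $d\ge20$. Changing the Strichartz pair does not help: dual biharmonic exponents satisfy $b'\ge\frac{2d}{d+4}$, so no pair works once $p\le\frac32-\frac2d$, which is allowed for $d\ge21$.

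\textbf{Missing idea: the one-derivative gain from Lemma \ref{lem:Strichartz_S}.} You mention this lemma only as a fallback for exponents, but its real role is to lower the number of derivatives on the nonlinearity. Take $(q_1,r_1)=(\infty,2)$ and $(q_2,r_2)=(2,\frac{2d}{d-2})$, and apply the lemma to $\Delta$ of the Duhamel term. This gives
\begin{align*}
\Bigl\|\Delta\int_{t_1}^{t_2}e^{is\Delta^2}h(s)\,ds\Bigr\|_{L^2}\lesssim\|\nabla h\|_{L^2_tL^{\frac{2d}{d+2}}_x([t_1,t_2]\times\R^d)}.
\end{align*}
Combined with Sobolev embedding, the same right-hand side bounds $\|\Delta v\|_{L^a_tL^b_x}$ for biharmonic admissible $(a,b)$, where $v$ is the Duhamel term. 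This is Pausader's estimate, cf.\ \cite{P07}.

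With it, you only need to estimate $\nabla F(u)$. This satisfies $|\nabla F(u)|\le p|u|^{p-1}|\nabla u|$ and requires only $F\in C^1$. H\"older with the $X$-norm (interpolated with $L^\infty_tH^2_x$ if needed) and Sobolev on $\nabla u$ then close Steps 1--3 as you planned.
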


\noindent
{\bf Proof.} See \cite{D} for the details.
\qed\\

The following long-time perturbation theory plays a crucial role in the concentration compactness argument in Section \ref{sec:concentration_compactness}.

\begin{prop}[Long-time perturbation theory]\label{prop:stability}
Let $d\geq1$, and let $p$ satisfy \eqref{eq:intercritical}. For each $A\gg1$, there exists $\delta_0=\delta_0(A)\ll1$ and $C=C(A)\gg1$ such that the following holds. Let $I\subset\R$ be a compact time interval, and let $\tilde{u}\in C(I,H^2(\R^d))$ be an approximate solution of \eqref{eq:4NLS} in the sense that
\begin{align*}
      i\pt_t \tilde{u}-\Delta^2\tilde{u}=-|\tilde{u}|^{p-1}\tilde{u}+e,
\end{align*}
for some $e\in N(I)$. Let $t_0\in I$ and $u_0\in H^2(\R^d)$. If 
\begin{align*}
      \|\tilde{u}\|_{X(I)}\leq A,\ \ \ \|e\|_{N(I)}\leq\delta
\end{align*}
and 
\begin{align*}
      \|e^{-i(t-t_0)\Delta^2}(\tilde{u}(t_0)-u_0)\|_{X(I)}\leq\delta
\end{align*}
for some $\delta\in(0,\delta_0]$, then there exists a solution $u\in C(I,H^2(\R^d))$ to \eqref{eq:4NLS} such that $u(t_0)=u_0$. Moreover, $u$ satisfies
\begin{align*}
      \|u-\tilde{u}\|_{X(I)}\leq C\delta.
\end{align*}
\end{prop}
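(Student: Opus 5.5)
The statement to prove is the long-time perturbation theory (Proposition \ref{prop:stability}). I will follow the standard Colliander--Keel--Staffilani--Takaoka--Tao / Kenig--Merle scheme. The only tools are the Kato type inhomogeneous estimate (Lemma \ref{lem:Kato}) with the exponents $(q_1,q_2,r)$ fixed above, and the nonlinear estimate
\begin{align*}
\bigl\| |u|^{p-1}u - |v|^{p-1}v \bigr\|_{N(I)} \lesssim \bigl(\|u\|_{X(I)}^{p-1}+\|v\|_{X(I)}^{p-1}\bigr)\|u-v\|_{X(I)}.
\end{align*}
This estimate follows from Hölder's inequality once we check the exponent relations. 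In space, $\frac{1}{r'} = \frac{p}{r}$ since $r=p+1$. In time, $\frac{1}{q_2'} = \frac{p}{q_1}$, i.e.\ $1-\frac1{q_2} = \frac{p}{q_1}$. I verify the latter directly from the formulas: $q_1+q_2$-type algebra gives $\frac{p}{q_1}+\frac1{q_2} = \frac{p(8-(d-4)(p-1)) + (dp-4)(p-1)-8}{4(p-1)(p+1)} = \frac{4(p^2-1)}{4(p-1)(p+1)} = 1$.

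Write $w = u - \tilde u$. Then $w$ solves
\begin{align*}
i\partial_t w - \Delta^2 w = -\bigl(|\tilde u + w|^{p-1}(\tilde u+w) - |\tilde u|^{p-1}\tilde u\bigr) - e, \qquad w(t_0)=u_0-\tilde u(t_0).
\end{align*}
By Duhamel's formula and Lemma \ref{lem:Kato}, on any subinterval $J\ni t_0$,
\begin{align*}
\|w\|_{X(J)} \le \delta + C\|e\|_{N(J)} + C\bigl(\|\tilde u\|_{X(J)}^{p-1}+\|w\|_{X(J)}^{p-1}\bigr)\|w\|_{X(J)}.
\end{align*}
Here the linear part is controlled by the hypothesis on $e^{-i(t-t_0)\Delta^2}(\tilde u(t_0)-u_0)$.

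Next, split $I$ (to the right and left of $t_0$) into $N \sim (A/\eta)^{q_1}$ consecutive intervals $I_j=[t_j,t_{j+1}]$ on which $\|\tilde u\|_{X(I_j)}\le \eta$, with $\eta$ small and depending only on $d,p$. On $I_j$, a continuity argument in the right endpoint gives
\begin{align*}
\|w\|_{X(I_j)} \le 2\bigl(\|e^{-i(t-t_j)\Delta^2}w(t_j)\|_{X(I_j)} + C\delta\bigr),
\end{align*}
provided the right side is small. This continuity argument uses the continuity of $J\mapsto\|w\|_{X(J)}$ and the fact that $w\in X$ locally, which comes from Proposition \ref{prop:LWP} together with Sobolev embedding. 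To pass to the next interval, I write the free evolution of $w(t_{j+1})$ via Duhamel from $t_0$:
\begin{align*}
e^{-i(t-t_{j+1})\Delta^2}w(t_{j+1}) = e^{-i(t-t_0)\Delta^2}w(t_0) + i\int_{t_0}^{t_{j+1}} e^{-i(t-s)\Delta^2}(\text{nonlin}+e)\,ds.
\end{align*}
I then estimate it in $X(I)$, again by Lemma \ref{lem:Kato}, which is applicable since restricting the source to $s<t_{j+1}$ is harmless. This gives
\begin{align*}
\|e^{-i(t-t_{j+1})\Delta^2}w(t_{j+1})\|_{X(I)} \le \delta + C\delta + C\sum_{k\le j}\bigl(\eta^{p-1}+\|w\|_{X(I_k)}^{p-1}\bigr)\|w\|_{X(I_k)}.
\end{align*}
By induction, $\|w\|_{X(I_j)}\le (C')^{j}\delta$. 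Summing over the $N$ intervals yields $\|w\|_{X(I)}\le C(A)\delta$, provided $\delta_0(A)$ is chosen so that $(C')^N\delta_0$ stays below the smallness threshold needed in each step.

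The remaining point is existence of $u$ on all of $I$ as an $H^2$ solution. Local existence comes from Proposition \ref{prop:LWP}. The $X$-bound just proved, combined with Strichartz estimates at the $H^2$ level, prevents blow-up of $\|u\|_{H^2}$ before the end of $I$. More precisely, the finiteness of $\|u\|_{X}$ gives finiteness of $\|u\|_{L^\infty H^2}$ on each $I_j$ via the standard persistence-of-regularity argument, the same one underlying Proposition \ref{prop:H^2-scattering}. The main obstacle is bookkeeping the non-admissible (Kato) exponents in the iterative step, since only Lemma \ref{lem:Kato} is available and it has no $L^\infty_t L^2_x$ endpoint. I handle this by working purely in $X$/$N$ norms as above, so that the Kato estimate is used only with the exponent pair fixed above.
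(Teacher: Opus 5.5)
Your proposal is correct. The exponent relation $p/q_1+1/q_2=1$ checks out, and you use the standard long-time perturbation argument: iterate over $O((A/\eta)^{q_1})$ subintervals with $\|\tilde{u}\|_{X(I_j)}\le\eta$, re-expand $e^{-i(t-t_{j+1})\Delta^2}w(t_{j+1})$ by Duhamel from $t_0$ so that only Lemma~\ref{lem:Kato} is needed, then close existence in $H^2$ via the blow-up alternative and persistence of regularity. The paper gives no proof of its own and simply refers to \cite{KoMa_N} for this standard result, so your route is essentially the intended one.
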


\noindent
{\bf Proof.} See \cite{KoMa_N}. \qed

\section{Variational analysis}\label{sec:variational_analysis}

In this section, we study the ground state $Q_{\omega}$ and prepare some variational lemmas.

\begin{lem}\label{lem:def_gs_3}
Let $d\geq1$, and $p$ satisfy \eqref{eq:intercritical}. For all $\omega>0$,
\begin{align*}
      S_{\omega}(Q_{\omega})
      =\inf\{\tilde{S}_{\omega}(\phi)\ |\ \phi\in H^2(\R^d)\setminus\{0\}, \ K(\phi)\leq0 \},
\end{align*}
where
\begin{align*}
      \tilde{S}_{\omega}(\phi)
      :=S_{\omega}(\phi)-\tfrac{2}{d(p-1)}K(\phi)
      =\tfrac{s_c}{d}\int_{\R^d}|\Delta\phi|^2dx
      +\tfrac{\omega}{2}\int_{\R^d}|\phi|^2dx.
\end{align*}
\end{lem}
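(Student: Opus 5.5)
We need to prove $S_\omega(Q_\omega)=m$, where $m:=\inf\{\tilde S_\omega(\phi)\mid \phi\in H^2\setminus\{0\},\ K(\phi)\le 0\}$. We will use \eqref{eq:def_gs_2} and a scaling argument.

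\textbf{Upper bound.} Since $Q_\omega\neq0$ and $K(Q_\omega)=0$, we have $\tilde S_\omega(Q_\omega)=S_\omega(Q_\omega)$. Hence $m\le S_\omega(Q_\omega)$. The formula for $\tilde S_\omega$ is a direct computation. Indeed, $S_\omega-\frac{2}{d(p-1)}K$ kills the potential term, because $\frac{2}{d(p-1)}\cdot\frac{d(p-1)}{2(p+1)}=\frac1{p+1}$. The coefficient of $\|\Delta\phi\|_{L^2}^2$ is $\frac12-\frac{4}{d(p-1)}=\frac{s_c}{d}$.

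\textbf{Lower bound.} Take $\phi\ne0$ with $K(\phi)\le0$. We must show $\tilde S_\omega(\phi)\ge S_\omega(Q_\omega)$. If $K(\phi)=0$, then \eqref{eq:def_gs_2} gives $S_\omega(\phi)\ge S_\omega(Q_\omega)$, and $\tilde S_\omega(\phi)=S_\omega(\phi)$, so we are done.

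If $K(\phi)<0$, consider the $L^2$-invariant scaling $\phi^\lambda(x)=\lambda^{d/2}\phi(\lambda x)$. It satisfies
\begin{align*}
K(\phi^\lambda)=2\lambda^4\|\Delta\phi\|_{L^2}^2-\tfrac{d(p-1)}{2(p+1)}\lambda^{\frac{d(p-1)}{2}}\|\phi\|_{L^{p+1}}^{p+1}.
\end{align*}
Because $p>1+8/d$, we have $\frac{d(p-1)}2>4$. So $K(\phi^\lambda)>0$ for small $\lambda>0$, while $K(\phi^1)<0$. By continuity there is $\lambda_0\in(0,1)$ with $K(\phi^{\lambda_0})=0$. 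Then \eqref{eq:def_gs_2} gives
\begin{align*}
S_\omega(Q_\omega)\le S_\omega(\phi^{\lambda_0})=\tilde S_\omega(\phi^{\lambda_0})=\tfrac{s_c}{d}\lambda_0^4\|\Delta\phi\|_{L^2}^2+\tfrac\omega2\|\phi\|_{L^2}^2\le\tilde S_\omega(\phi),
\end{align*}
where the last step uses $s_c>0$ and $\lambda_0<1$. This completes the argument.

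The only point requiring care is the existence of the zero $\lambda_0\in(0,1)$ of $\lambda\mapsto K(\phi^\lambda)$. It follows from the gap between the exponents $4<\frac{d(p-1)}{2}$, i.e. $L^2$-supercriticality, together with $\phi\neq0$, which guarantees $\|\phi\|_{L^{p+1}}>0$ and $\|\Delta\phi\|_{L^2}>0$. The monotonicity of $\tilde S_\omega(\phi^\lambda)$ in $\lambda$ also uses $s_c>0$. Everything else is a direct computation.
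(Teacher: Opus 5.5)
Your proof is correct and has the same structure as the paper's: the bound $\inf\le S_\omega(Q_\omega)$ comes from testing with $Q_\omega$, and the case $K(\phi)<0$ is handled by rescaling onto $\{K=0\}$ and invoking \eqref{eq:def_gs_2}. The paper uses the amplitude scaling $\lambda\phi$ instead of your mass-preserving scaling $\lambda^{d/2}\phi(\lambda x)$; with $\lambda\phi$ the zero $\lambda_0\in(0,1)$ exists for any $p>1$ and $\tilde S_\omega(\lambda_0\phi)=\lambda_0^2\tilde S_\omega(\phi)$, whereas your route needs $p>1+\frac{8}{d}$ and $s_c>0$, both of which hold here, so either choice works.
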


\noindent
{\bf Proof.} By the definition of $\tilde{S}_{\omega}$, we see that if $K(\phi)=0$, then $\tilde{S}_{\omega}(\phi)=S_{\omega}(\phi)$. Hence, we have
\begin{align}\label{eq:S>}
      S_{\omega}(Q_{\omega})
      \geq&\ \inf\{\tilde{S}_{\omega}(\phi)\ |\ \phi\in H^2(\R^d)\setminus\{0\},\ K(\phi)\leq0\}.
\end{align}
Let $\phi$ be such that $K(\phi)<0$. For any $\lambda>0$, we have
\begin{align*}
      K(\lambda\phi)
      =2\lambda^2\int_{\R^d}|\Delta\phi|^2dx
      -\tfrac{d(p-1)}{2(p+1)}\lambda^{p+1}\int_{\R^d}|\phi|^{p+1}dx.
\end{align*}
Since $K(\phi)<0$, there exists $0<\lambda_0<1$ such that $K(\lambda_0\phi)=0$. Thus,
\begin{align*}
      S_{\omega}(Q_{\omega})\leq S_{\omega}(\lambda_0\phi)
      =\tilde{S}_{\omega}(\lambda_0\phi)=\lambda_0^2\tilde{S}_{\omega}(\phi)
      <\tilde{S}_{\omega}(\phi).
\end{align*}
Therefore, we obtain
\begin{align}\label{eq:S<}
      S_{\omega}(Q_{\omega})
      \leq\inf\{\tilde{S}_{\omega}(\phi)\ |\ \phi\in H^2(\R^d)\setminus\{0\},\ K(\phi)\leq0\}.
\end{align}
Lemma \ref{lem:def_gs_3} follows from \eqref{eq:S>} and \eqref{eq:S<}.
\qed \\

From \eqref{eq:def_gs_2} and conservation laws, for each $\omega>0$, the set of functions satisfying \eqref{eq:below_gs} is invariant under the flow of \eqref{eq:4NLS}.

\begin{lem}\label{lem:invariant_set}
Let $d\geq1$, and $p$ satisfy \eqref{eq:intercritical}. Let $\omega>0$ and let $u\in C(I,H^2(\R^d))$ be a solution to \eqref{eq:4NLS} on a maximal interval $I$ of existence. If $S_{\omega}(u(t_0))<S_{\omega}(Q_{\omega})$ and $K(u(t_0))\geq0$ for some $t_0\in I$, then $S_{\omega}(u(t))<S_{\omega}(Q_{\omega})$ and $K(u(t))\geq0$ for all $t\in I$. In particular, $u$ can be exist globally in time, i.e. $I=\R$.
\end{lem}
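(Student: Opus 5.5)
We argue by continuity in time, using conservation of mass and energy together with the variational characterization in Lemma \ref{lem:def_gs_3}. Because $M$ and $E$ are conserved (Proposition \ref{prop:LWP}), $S_{\omega}(u(t))=S_{\omega}(u(t_0))<S_{\omega}(Q_{\omega})$ for every $t\in I$. It therefore suffices to show that $K(u(t))\ge 0$ on $I$ and then to obtain a uniform $H^2$ bound.

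\emph{Step 1: $K(u(t))$ never becomes zero, unless $u\equiv 0$.} Suppose $K(u(t_1))=0$ for some $t_1\in I$. If $u(t_1)\neq0$, then \eqref{eq:def_gs_2} gives $S_{\omega}(u(t_1))\ge S_{\omega}(Q_{\omega})$, which is a contradiction. If $u(t_1)=0$, then $M(u)=0$ and $u\equiv0$, so the claim is trivial. Hence, for nontrivial $u$, $K(u(t))\neq0$ for all $t\in I$.

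\emph{Step 2: $K(u(t))$ keeps its sign.} The map $t\mapsto K(u(t))$ is continuous, because $u\in C(I,H^2)$ and $H^2\hookrightarrow L^{p+1}$ in the range \eqref{eq:intercritical}. The set $\{t\in I: K(u(t))>0\}$ is open. Its complement in $I$ is $\{K<0\}$ by Step 1, and that set is also open. Since $I$ is connected, $K(u(t))$ has one sign throughout. At $t_0$ we have $K(u(t_0))\ge0$, and $K(u(t_0))\neq0$ in the nontrivial case, so $K(u(t))>0$ for all $t\in I$.

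\emph{Step 3: uniform $H^2$ bound and global existence.} Since $K(u(t))\ge0$,
\begin{align*}
\tfrac{s_c}{d}\|\Delta u(t)\|_{L^2}^2+\tfrac{\omega}{2}M(u(t))=\tilde S_{\omega}(u(t))=S_{\omega}(u(t))-\tfrac{2}{d(p-1)}K(u(t))\le S_{\omega}(u_0)<S_{\omega}(Q_{\omega}).
\end{align*}
This bounds $\|u(t)\|_{H^2}$ uniformly on $I$. The existence times $T_*,T^*$ in Proposition \ref{prop:LWP} are non-increasing functions of $\|u_0\|_{H^2}$, so they are bounded below by a fixed $\tau>0$ along the flow. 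The solution can therefore always be extended by $\tau$, which forces $I=\R$.

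There is no real obstacle here. The only points that need care are the continuity of $K$ along the flow, which uses the Sobolev embedding, and the exclusion of the trivial solution in Step 1.
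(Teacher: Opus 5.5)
Your proof is correct and follows the paper's argument. You use conservation of $S_{\omega}$, then \eqref{eq:def_gs_2} to rule out a zero of $K(u(t))$ for nontrivial $u$; your open-set/connectedness step is the same as the paper's intermediate value step. Finally, the bound $\tilde S_{\omega}(u(t))\le S_{\omega}(u(t_0))$, combined with the local theory of Proposition \ref{prop:LWP}, gives $I=\R$.
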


\noindent
{\bf Proof.} Suppose that $S_{\omega}(u(t_0))<S_{\omega}(Q_{\omega})$ and $K(u(t_0))\geq0$ for some $t_0\in I$. Then, by the conservation laws, we have $S_{\omega}(u(t))=S_{\omega}(u(t_0))<S_{\omega}(Q_{\omega})$ for all $t\in I$. If $K(u(t_1))<0$ for some $t_1\in I$, then by the continuity of $K(u(t))$, there exists $t_2\in I$ such that $K(u(t_2))=0$. From \eqref{eq:def_gs_2} this implies that $S_{\omega}(u(t_2))\geq S_{\omega}(Q_{\omega})$ or $u(t_2)=0$. $S_{\omega}(u(t_2))\geq S_{\omega}(Q_{\omega})$ is a contradiction and $u(t_2)=0$ implies that $u(t)=0$ and so $K(u(t))=0$ for all $t\in I$, which is also a contradiction.

Now we prove $I=\R$. By using $K(u(t))\geq0$, we have
\begin{align*}
      \|u(t)\|_{H^2(\R^d)}
      \lesssim \tilde{S}_{\omega}(u(t))
      = S_{\omega}(u)-\tfrac{2}{d(p-1)}K(u(t))
      \leq S_{\omega}(u)
\end{align*}
for all $t\in I$. This implies that $\|u(t)\|_{H^2(\R^d)}$ is uniformly bounded in $t\in I$, which proves the global existence.
\qed \\

To prove Theorem \ref{thm:main}, we use the following lemma.

\begin{lem}\label{lem:coercivity}
Let $d\geq1$, and $p$ satisfy \eqref{eq:intercritical}. 
If $K(u)>0$, then for any $\omega>0$,
\begin{align*}
      K(u)> 2\left\{1-\left(\tfrac{S_{\omega}(u)}{S_{\omega}(Q_{\omega})}\right)^{\frac{p-1}{2}}\right\}
      \|\Delta u\|_{L^2}^2.
\end{align*}
\end{lem}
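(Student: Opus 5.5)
The plan is to scale $u$ by a constant factor $\lambda$ until it reaches the Nehari-type manifold $\{K=0\}$, and then compare with the ground state through the characterization \eqref{eq:def_gs_2} (equivalently Lemma \ref{lem:def_gs_3}). Set $A:=\|\Delta u\|_{L^2}^2$, $B:=\|u\|_{L^{p+1}}^{p+1}$ and $c:=\tfrac{d(p-1)}{2(p+1)}>0$, so that $K(u)=2A-cB$. Since $K(u)>0$, we have $u\neq0$ and $A>0$. If $B=0$ then $K(u)=2A$ and the claim is immediate, because the right-hand side is strictly less than $2A$ (note $S_\omega(u)\ge\tilde S_\omega(u)>0$). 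So assume $B>0$.

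First, for $\lambda>0$ we have $K(\lambda u)=2\lambda^2A-c\lambda^{p+1}B$. This vanishes exactly at $\lambda_0>0$ with $\lambda_0^{p-1}=\tfrac{2A}{cB}$, and $K(u)>0$ gives $\lambda_0>1$. Second, since $\lambda_0u\neq0$ and $K(\lambda_0u)=0$, \eqref{eq:def_gs_2} yields
\begin{align*}
S_\omega(Q_\omega)\le S_\omega(\lambda_0u)=\tilde S_\omega(\lambda_0u)=\lambda_0^2\tilde S_\omega(u),
\end{align*}
where we used that $\tilde S_\omega$ is quadratic. Moreover $\tilde S_\omega(u)=S_\omega(u)-\tfrac{2}{d(p-1)}K(u)<S_\omega(u)$, since $K(u)>0$. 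Hence $\lambda_0^{-2}<S_\omega(u)/S_\omega(Q_\omega)$, and raising to the power $\tfrac{p-1}{2}$ gives
\begin{align*}
\lambda_0^{-(p-1)}<\left(\tfrac{S_\omega(u)}{S_\omega(Q_\omega)}\right)^{\frac{p-1}{2}}.
\end{align*}

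Finally, rewrite $K(u)=2A-cB=2A\bigl(1-\tfrac{cB}{2A}\bigr)=2A\bigl(1-\lambda_0^{-(p-1)}\bigr)$. Inserting the previous inequality, and using $A>0$, gives the strict bound
\begin{align*}
K(u)>2\left\{1-\left(\tfrac{S_\omega(u)}{S_\omega(Q_\omega)}\right)^{\frac{p-1}{2}}\right\}\|\Delta u\|_{L^2}^2.
\end{align*}
There is no real obstacle here. The only points needing care are the strictness, which comes from $\tilde S_\omega(u)<S_\omega(u)$, and the positivity $S_\omega(u)>0$ needed to take the fractional power. The latter follows from $S_\omega(u)>\tilde S_\omega(u)>0$.
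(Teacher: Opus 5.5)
Your proof is correct and is essentially the paper's argument with Lemma \ref{lem:def_gs_3} inlined. The paper takes $\lambda=(S_\omega(Q_\omega)/S_\omega(u))^{1/2}$, shows $\tilde S_\omega(\lambda u)<S_\omega(Q_\omega)$ and hence $K(\lambda u)>0$ (equivalently $\lambda<\lambda_0$ in your notation), and concludes from $K(u)=\lambda^{-(p+1)}K(\lambda u)+2(1-\lambda^{-(p-1)})\|\Delta u\|_{L^2}^2$, whereas you derive $\lambda_0^{-2}<S_\omega(u)/S_\omega(Q_\omega)$ directly from \eqref{eq:def_gs_2} at the Nehari point (your $B=0$ case is vacuous since $\|u\|_{L^{p+1}}=0$ forces $u=0$, but treating it does no harm).
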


\noindent
{\bf Proof.} 
By using $K(u)>0$, we have
\begin{align*}
      \lambda
      :=\left(\tfrac{S_{\omega}(Q_{\omega})}{S_{\omega}(u)}\right)^{\frac12}
      =\left(\tfrac{S_{\omega}(Q_{\omega})}
      {\tilde{S}_{\omega}(u)+\frac{2}{d(p-1)}K(u)}\right)^{\frac12}
      <\left(\tfrac{S_{\omega}(Q_{\omega})}{\tilde{S}_{\omega}(u)}\right)^{\frac12}.
\end{align*}
Hence,
\begin{align*}
      \tilde{S}_{\omega}(\lambda u)
      = \lambda^2\tilde{S}_{\omega}(u)
      <S_{\omega}(Q_{\omega}).
\end{align*}
From Lemma \ref{lem:def_gs_3}, this implies that $K(\lambda u)>0$. Thus, we obtain
\begin{align*}
      K(u)=&\ \lambda^{-(p+1)}K(\lambda u)
      +2(1-\lambda^{-(p-1)})\|\Delta u\|_{L^2}^2 \\
      >&\ 2(1-\lambda^{-(p-1)})\|\Delta u\|_{L^2}^2.
\end{align*}
This completes the proof of Lemma \ref{lem:coercivity}.
\qed

\section{Concentration compactness}\label{sec:concentration_compactness}

In this section, we construct a critical non-scattering solution with action equal to $L_{\omega}^*$ under the hypothesis $L_{\omega}^*<S_{\omega}(Q_{\omega})$.

\begin{prop}[Profile decomposition]\label{prop:profile_decomp}
Let $d\geq1$ and $p$ satisfy \eqref{eq:intercritical}. Let $\{f_n\}$ be a sequence of functions uniformly bounded in $H^2(\R^d)$. Passing to a subsequence if necessary, there exists $J^{*}\in\{0,1,...\}\cup\{\infty\}$ and for each $j\in[1,J^{*}]$, there exists a function $\phi^j\in H^2(\R^d)\setminus\{0\}$ and a sequence $(t_n^j,x_n^j)\in\R\times\R^d$ such that for each  $1\leq J\leq J^{*}$, we have the decomposition
\begin{align}\label{eq:decomp}
      f_n(x)=\sum_{j=1}^J e^{it_n^j\Delta^2}\phi^j(x-x_n^j)+R_n^J(x)
\end{align}
with the following properties: 
\begin{enumerate}\renewcommand{\labelenumi}{(\roman{enumi})}
\item
\begin{align}\label{eq:smallness}
      \lim_{J\rightarrow J^{*}}\limsup_{n\rightarrow\infty}\|e^{-it\Delta^2}R_n^J\|_{X(\R)}=0.
\end{align}
\item
For each $1\leq J\leq J^{*}$,
\begin{align}\label{eq:norm_decomp}
      &\lim_{n\rightarrow\infty}\left\{
      \|f_n\|_{\dot{H}^s}^2-\sum_{j=1}^J \|\phi^j\|_{\dot{H}^s}^2-\|R_n^J\|_{\dot{H}^s}^2
      \right\}=0
      \ \ \mbox{for all}\ 0\leq s\leq2,\ \ \mbox{and} \\
      &\lim_{n\to\infty}\left\{
      E(f_n)-\sum_{j=1}^J E(e^{it_n^j\Delta^2}\phi^j) -E(R_n^J)
      \right\}=0.
\end{align}
\item
For each $j\in[1,J^{*}]$, either $x_n^j\equiv0$ or $\lim_{n\rightarrow\infty}|x_n^j|=\infty$ holds. Similarly, for each  $j\in[1,J^{*}]$, either $t_n^j\equiv0$, $\lim_{n\rightarrow\infty}t_n^j=-\infty$ or $\lim_{n\rightarrow\infty}t_n^j=+\infty$ holds. Furthermore, if $1\leq j\neq k\leq J^{*}$, then
\begin{align}\label{eq:orthogonality}
      |x_n^j-x_n^k|+|t_n^j-t_n^k|\rightarrow\infty \ \ \ \mbox{as}\ \ \ n\rightarrow\infty.
\end{align}
\end{enumerate}
\end{prop}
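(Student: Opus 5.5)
We prove Proposition \ref{prop:profile_decomp} by the standard iterative extraction of bubbles in the inhomogeneous $H^2$ setting. Since \eqref{eq:4NLS} is $H^2$-subcritical and we work with bounded $H^2$ sequences, no scaling parameters are needed; only space and time translations occur, as in the analogous statements for intercritical NLS (Holmer--Roudenko, Duyckaerts--Holmer--Roudenko). The engine is a refined Strichartz/inverse inequality. We first interpolate the $X$ norm between an $L^\infty$-type norm and Strichartz-controlled norms. For $f$ bounded in $H^2$, Lemma \ref{lem:Strichartz_B} and Sobolev embedding give
\begin{align*}
\|e^{-it\Delta^2}f\|_{X(\R)}\lesssim \|e^{-it\Delta^2}f\|_{L^\infty_tL^\infty_x}^{\theta}\,\|f\|_{H^2}^{1-\theta}
\end{align*}
for some $\theta\in(0,1)$. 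This uses $(q_1,r)$ with $s_c<2$, placing $X$ strictly inside the range of $H^2$-Strichartz exponents; if $L^\infty_x$ is unavailable for large $d$, we replace it by a Besov $\dot B^{-d/2}_{\infty,\infty}$-type norm after Littlewood--Paley truncation to frequencies in $[R^{-1},R]$. The inverse step then reads as follows. If $\limsup_n\|e^{-it\Delta^2}f_n\|_{X(\R)}\ge\epsilon$, we choose $(t_n,x_n)$ with $|e^{-it_n\Delta^2}P_{\le R}f_n(x_n)|\gtrsim\epsilon^{C}$. Weak compactness of $e^{-it_n\Delta^2}f_n(\cdot+x_n)$ in $H^2$ then yields a weak limit $\phi$ with $\|\phi\|_{H^2}\gtrsim \epsilon^{C}A^{-C}$, where $A=\sup_n\|f_n\|_{H^2}$. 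The low-frequency truncation is harmless because an $H^2$ bound controls $L^2$.

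Next comes the iteration. We set $R_n^0=f_n$. At step $j$ we apply the inverse step to $R_n^{j-1}$, define $\phi^j$ as the weak limit of $e^{-it_n^j\Delta^2}R_n^{j-1}(\cdot+x_n^j)$, and put $R_n^j=R_n^{j-1}-e^{it_n^j\Delta^2}\phi^j(\cdot-x_n^j)$. Weak convergence in the Hilbert spaces $\dot H^s$, $0\le s\le2$, gives Pythagorean expansion, which is \eqref{eq:norm_decomp}. Summability of $\|\phi^j\|_{H^2}^2$, together with the lower bound on $\|\phi^j\|_{H^2}$ in terms of $\limsup\|e^{-it\Delta^2}R_n^{j-1}\|_X$, forces \eqref{eq:smallness}. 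We pass to subsequences and adjust profiles to obtain the normalizations in (iii): if $x_n^j$ is bounded we absorb its limit into $\phi^j$ and set $x_n^j\equiv0$, and we do the same with $t_n^j$ using the continuity of $e^{it\Delta^2}$ on $H^2$. Orthogonality \eqref{eq:orthogonality} follows by the usual contradiction. If $(t_n^j-t_n^k,x_n^j-x_n^k)$ stayed bounded for some $k<j$, the weak limit defining $\phi^j$ would vanish, since by construction $e^{-it_n^k\Delta^2}R_n^{j-1}(\cdot+x_n^k)\rightharpoonup0$; this contradicts $\phi^j\ne0$.

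The main obstacle is the energy decoupling, that is, the asymptotic additivity of $\|\cdot\|_{L^{p+1}}^{p+1}$. The kinetic part is already covered by \eqref{eq:norm_decomp}. For the potential part we argue in three steps.
\begin{enumerate}
\item For profiles with $|t_n^j|\to\infty$, the dispersive estimate \eqref{eq:dispersive} with approximation by Schwartz functions gives $\|e^{it_n^j\Delta^2}\phi^j\|_{L^{p+1}}\to0$.
\item For profiles with $t_n^j\equiv0$, their spatial centers separate, so cross terms vanish by the elementary inequality $\bigl||\sum a_j|^{p+1}-\sum|a_j|^{p+1}\bigr|\lesssim\sum_{j\ne k}|a_j||a_k|^{p}$ and disjointness of supports after approximation.
\item The remainder $R_n^J$ interacts trivially with each profile because $e^{-it_n^j\Delta^2}R_n^J(\cdot+x_n^j)\rightharpoonup0$ in $H^2$, hence converges strongly to $0$ in $L^{p+1}_{loc}$ by Rellich. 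Compactness on bounded sets together with the tail bound makes the cross terms vanish.
\end{enumerate}
This yields the energy expansion for each fixed $J$ and completes the proof.
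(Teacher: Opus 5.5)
Your proposal is correct. The paper does not actually prove this proposition: it obtains it by taking $G=\{(0,\mathcal{I}_d)\}$ in the profile decomposition for $G$-invariant sequences from \cite{KoMa_N}.

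What you wrote is the standard direct proof that underlies such a result. Its ingredients are:
\begin{itemize}
\item a frequency-localized inverse Strichartz step, where truncating to frequencies in $[R^{-1},R]$ costs only $O(R^{-s_c}A+R^{s_c-2}A)$ in $X$ because $0<s_c<2$;
\item iterative extraction of weak limits;
\item Pythagorean expansion in $\dot H^s$;
\item decoupling of the potential energy, using dispersive decay when $|t_n^j|\to\infty$ and Rellich compactness when $t_n^j\equiv0$.
\end{itemize}

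The two routes trade off as follows. Yours is self-contained and, for the trivial group, avoids the bookkeeping of the equivariant version. That version works with profiles that are only $G_{w}$-invariant and sums them over coset representatives. The citation is shorter and keeps this statement as the trivial-group case of the $G$-invariant decomposition on which Theorem \ref{thm:previous} rests.

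One point should be made explicit. For $j-1>k$, the weak convergence $e^{-it_n^k\Delta^2}R_n^{j-1}(\cdot+x_n^k)\rightharpoonup0$ does not hold purely by construction. It also uses the orthogonality of profile $k$ with profiles $k+1,\dots,j-1$. So \eqref{eq:orthogonality} has to be established by induction on $j$, alongside the extraction.
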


\noindent
{\bf Proof.} The proof is standard, and so, we omit the detail. In \cite{KoMa_N}, the profile decomposition for a sequence of $G$-invariant functions for any subgroup $G$ of $\R/2\pi\Z\times O(d)$ have been proved. Therefore, taking $G=\{(0,\mathcal{I}_d)\}$, we obtain Proposition \ref{prop:profile_decomp}.
\qed \\

\begin{lem}[Concentration compactness]\label{lem:concentration_compactness}
Let $d\geq1$, and $p$ satisfy \eqref{eq:intercritical}. Suppose $L_{\omega}^{*}<S_{\omega}(Q_{\omega})$. Let $\{u_n\}\subset C(\R,H^2(\R^d))$ be a sequence of global solutions to \eqref{eq:4NLS} such that 
\begin{align}
      S_{\omega}(u_n)\leq L_{\omega}^{*}\ \ \mbox{and}\ \ 
      K(u_n(0))\geq0
\end{align}
for all $n$. Suppose that $S_{\omega}(u_n)\rightarrow L_{\omega}^{*}$ and there exists $\{t_n\}\subset\R$ such that
\begin{align}
      \lim_{n\rightarrow\infty}\|u_n\|_{X((-\infty,t_n])}
      =\lim_{n\rightarrow\infty}\|u_n\|_{X([t_n,+\infty))}
      =\infty.
\end{align}
Then, there exists $\phi\in H^2(\R^d)$ and $\{x_n\}\subset\R^d$ such that $u_n(t_n,\cdot+x_n)$ has a subsequence that converges to $\phi$ in $H^2(\R^d)$. 
\end{lem}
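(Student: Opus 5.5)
We follow the standard Kenig--Merle scheme. By time translation we replace $u_n$ by $u_n(\cdot+t_n)$, so that $t_n=0$. By Lemma \ref{lem:invariant_set} and $K(u_n(0))\ge 0$, $S_\omega(u_n)\le L_\omega^*<S_\omega(Q_\omega)$, the sequence $u_n(0)$ is bounded in $H^2$: indeed $\|u_n(0)\|_{H^2}^2\lesssim\tilde S_\omega(u_n(0))\le S_\omega(u_n)$. We apply Proposition \ref{prop:profile_decomp} to $f_n=u_n(0)$. The first task is to show that every profile and the remainder stay in the region $\{K\ge0,\ S_\omega<S_\omega(Q_\omega)\}$ and that their actions split asymptotically. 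By \eqref{eq:norm_decomp} with $s=0$ and the energy decoupling, we get $S_\omega(u_n(0))=\sum_j S_\omega(e^{it_n^j\Delta^2}\phi^j)+S_\omega(R_n^J)+o(1)$. The same decoupling holds for $\tilde S_\omega$, since it involves only $\dot H^2$ and $L^2$ norms. Because $\tilde S_\omega\le S_\omega$ on $\{K\ge0\}$ and $\tilde S_\omega(u_n(0))<S_\omega(Q_\omega)$, each piece has $\tilde S_\omega<S_\omega(Q_\omega)$. Lemma \ref{lem:def_gs_3} then forces $K>0$ for each nonzero piece (up to $o(1)$, uniformly, using $L_\omega^*<S_\omega(Q_\omega)$ strictly). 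Consequently all $S_\omega$ of the pieces are nonnegative and sum to at most $L_\omega^*$ in the limit.

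Next we define nonlinear profiles $v^j$. If $t_n^j\equiv0$, $v^j$ is the solution with data $\phi^j$. If $t_n^j\to\pm\infty$, $v^j$ is the solution scattering to $e^{-it\Delta^2}\phi^j$ as $t\to\mp\infty$; it is obtained from the wave operator via Proposition \ref{prop:small_data} and a fixed point at $\pm\infty$. In this case $S_\omega(v^j)=\lim_n S_\omega(e^{it_n^j\Delta^2}\phi^j)$, because the $L^{p+1}$ norm of a free evolution decays as $|t|\to\infty$, and $K(v^j)\ge0$. Suppose first that $\limsup_n S_\omega(e^{it_n^j\Delta^2}\phi^j)<L_\omega^*$ for every $j$. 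Then each $v^j$ has action strictly below $L_\omega^*$, so by the definition \eqref{eq:threshold} we have $\|v^j\|_{X(\R)}<\infty$. We set $\tilde u_n^J=\sum_{j\le J}v^j(t+t_n^j,x-x_n^j)$ (with appropriate sign convention) plus $e^{-it\Delta^2}R_n^J$. The orthogonality \eqref{eq:orthogonality} gives asymptotic decoupling of the $X$ norms, so $\|\tilde u_n^J\|_{X}^{q_1}\lesssim\sum_j C_\omega(S_\omega(\phi^j))+o(1)$. This bound is uniform in $J$, since small profiles are controlled by the small data theory and their $S_\omega$'s are summable. The error $e=|\tilde u|^{p-1}\tilde u-\sum_j|v^j|^{p-1}v^j$ tends to zero in $N(\R)$ as $n\to\infty$ and then $J\to J^*$, using orthogonality and \eqref{eq:smallness}. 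Proposition \ref{prop:stability} then yields $\limsup_n\|u_n\|_{X(\R)}<\infty$, contradicting the hypothesis. Hence some profile, say $j=1$, carries the full action $L_\omega^*$. The decoupling then forces $J^*=1$ and $S_\omega(R_n^1)\to0$, hence $\|R_n^1\|_{H^2}\to0$ via the coercivity $\|\cdot\|_{H^2}^2\lesssim\tilde S_\omega\le S_\omega$ on $\{K\ge0\}$.

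It remains to exclude $t_n^1\to\pm\infty$. If $t_n^1\to+\infty$, then $\|e^{-it\Delta^2}u_n(0)\|_{X([0,\infty))}\le\|e^{-it\Delta^2}\phi^1\|_{X([t_n^1,\infty))}+o(1)\to0$. Proposition \ref{prop:small_data} then bounds $\|u_n\|_{X([0,\infty))}$, contradicting $\|u_n\|_{X([t_n,\infty))}\to\infty$. The case $t_n^1\to-\infty$ is symmetric, using the backward half. Thus $t_n^1\equiv0$ and $u_n(0,\cdot+x_n^1)=\phi^1+R_n^1(\cdot+x_n^1)\to\phi^1$ in $H^2$, which is the claim with $x_n=x_n^1$.

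The main obstacle is the nonlinear decoupling step: one must show that the cross terms in the nonlinearity vanish in $N(\R)$ and that the remainder's contribution is negligible. With non-integer $p$ and no derivative spaces in $X$, this requires approximating the $v^j$ by compactly supported functions in $X$ and exploiting \eqref{eq:orthogonality} in space-time. A subtle secondary point is verifying uniformly that $K>0$ for each profile, which is needed so that \eqref{eq:threshold} applies.
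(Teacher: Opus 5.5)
Your proposal is correct and follows essentially the same Kenig--Merle argument as the paper: profile decomposition of $u_n(t_n)$, $\tilde S_\omega$-decoupling together with Lemma~\ref{lem:def_gs_3} to get $K>0$ for every profile, nonlinear profiles with Proposition~\ref{prop:stability} to rule out the case where all profiles lie strictly below $L_\omega^*$, and small data on a half-line to exclude $t_n^1\to\pm\infty$. You only organize the cases as ``all profiles below threshold'' versus ``one profile at threshold'' instead of the paper's $J^*=0$, $J^*=1$, $J^*\geq2$. The one slip is cosmetic: with the paper's convention $e^{it_n^j\Delta^2}\phi^j$ the relevant norm is over $[-t_n^1,\infty)$, so it is $t_n^1\to-\infty$ that makes the free part small on $[0,\infty)$, and the nonlinear profiles should be translated as $v^j(t-t_n^j)$. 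Since you treat both signs symmetrically, this does not affect the argument.
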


\noindent
{\bf Proof.} Since $L_{\omega}^{*}<S_{\omega}(Q_{\omega})$, from Lemma \ref{lem:invariant_set} we have $K(u_n(t))\geq0$ for all $t\in\R$. Thus, by using time translation symmetry, we may assume $t_n\equiv0$ without loss of generality. Then
\begin{align}\label{eq:blowup_X}
      \lim_{n\rightarrow\infty}\|u_n\|_{X((-\infty,0])}
      =\lim_{n\rightarrow\infty}\|u_n\|_{X([0,+\infty))}
      =\infty.
\end{align}
Applying Proposition \ref{prop:profile_decomp} to $u_n(0)$, passing to a subsequence if necessary, for each $1\leq J\leq J^{*}$ we decompose
\begin{align}
      u_n(0)=\sum_{j=1}^J e^{it_n^j\Delta^2}\phi^j(x-x_n^j)+R_n^J
\end{align}
with
\begin{align}
      &M(u_n)
      =\sum_{j=1}^J M(\phi^j)+M(R_n^J)+o(1), \label{eq:mass_decomp} \\
      &E(u_n)
      =\sum_{j=1}^J E(e^{it_n^j\Delta^2}\phi^j)
      +E(R_n^J)+o(1), \label{eq:energy_decomp} \\
      &\|u_n(0)\|_{\dot{H}^2}^2
      =\sum_{j=1}^J \|\phi^j\|_{\dot{H}^2}^2+\|R_n^J\|_{\dot{H}^2}^2+o(1),
      \label{eq:kenergy_decomp}
\end{align}
where $o(1)\rightarrow0$ as $n\rightarrow\infty$ for each $1\leq J\leq J^{*}$. From \eqref{eq:mass_decomp} and \eqref{eq:energy_decomp}, we have
\begin{align}
      \lim_{n\rightarrow\infty}\sum_{j=1}^J S_{\omega}(e^{it_n^j\Delta^2}\phi^j)
      +\lim_{n\rightarrow\infty}S_{\omega}(R_n^J)
      = \lim_{n\rightarrow\infty}S_{\omega}(u_n)
      = L_{\omega}^{*}<S_{\omega}(Q_{\omega}). \label{eq:sum_S} 
\end{align}
Since $K(u_n(0))\geq0$ and $S_{\omega}(Q_{\omega})=\tilde{S}_{\omega}(Q_{\omega})$, we see that
\begin{align*}
      \lim_{n\rightarrow\infty}\tilde{S}_{\omega}(u_n(0))
      \leq \lim_{n\rightarrow\infty}S_{\omega}(u_n(0))
      = L_{\omega}^{*}
      < S_{\omega}(Q_{\omega})
      = \tilde{S}_{\omega}(Q_{\omega}).
\end{align*} 
Hence, from \eqref{eq:kenergy_decomp} we have
\begin{align}
      \lim_{n\rightarrow\infty}\sum_{j=1}^J \tilde{S}_{\omega}(\phi^j)
      +\lim_{n\rightarrow\infty}\tilde{S}_{\omega}(R_n^J)
      = \lim_{n\rightarrow\infty}\tilde{S}_{\omega}(u_n(0)) 
      < \tilde{S}_{\omega}(Q_{\omega}). \label{eq:sum_tilde_S}
\end{align}
From \eqref{eq:sum_tilde_S}, for $n$ sufficiently large, $\tilde{S}_{\omega}(e^{it_n^j\Delta^2}\phi^j)=\tilde{S}_{\omega}(\phi^j)<\tilde{S}_{\omega}(Q_{\omega})$ and so $K(e^{it_n^j\Delta^2}\phi^j)>0$ for each $j\in[1,J^{*}]$. Thus, we obtain
\begin{align}
      \limsup_{n\rightarrow\infty}S_{\omega}(e^{it_n^j\Delta^2}\phi^j)
      \geq \tilde{S}_{\omega}(\phi^j)>0
      \ \ \ \mbox{for all}\ j\in[1,J^{*}]. \label{eq:S_j>0}
\end{align}
Similarly, we have 
\begin{align*}
      \limsup_{n\to\infty}S_{\omega}(R_n^J)\geq0
      \ \ \ \mbox{for all}\ J\in[1,J^{*}].
\end{align*}

Now we consider the following three cases: $J^{*}=0$, $J^{*}=1$ and $J^{*}\geq2$. We will show that the first case and the third case lead to contradiction. In the second case, we will get desired $\phi\in H^2(\R^d)$.

{\bf Case 1:} $J^{*}=0$. In this case, we have
\begin{align*}
      \limsup_{n\rightarrow\infty}\|e^{-it\Delta^2}u_n(0)\|_{X(\R)}
      =\limsup_{n\rightarrow\infty}\|e^{-it\Delta^2}R_n^0\|_{X(\R)}
      =0.
\end{align*}
Hence, for $n$ sufficiently large, we obtain
\begin{align*}
      \|e^{-it\Delta^2}u_n(0)\|_{X(\R)}\leq \delta_{sd}.
\end{align*}
Thus, by using Proposition \ref{prop:small_data}, we have
\begin{align*}
      \|u_n\|_{X(\R)}\leq2\|e^{-it\Delta^2}u_n(0)\|_{X(\R)}\leq2\delta_{sd}.
\end{align*}
This contradicts \eqref{eq:blowup_X} and so $J^{*}=0$ does not occur.

{\bf Case 2:} $J^{*}=1$. In this case, there exists a single profile in the decomposition and so we can write
\begin{align*}
      u_n(0)=e^{it_n\Delta^2}\phi(x-x_n)+R_n.
\end{align*}
Passing to a subsequence, we may assume that either $t_n\equiv0$, $\lim_{n\rightarrow\infty}t_n=-\infty$ or $\lim_{n\rightarrow\infty}t_n=+\infty$ holds. If $t_n\rightarrow-\infty$, then by the Strichartz estimate and the monotone convergence theorem,
\begin{align*}
      \|e^{-i(t-t_n)\Delta^2}\phi(\cdot-x_n)\|_{X([0,+\infty))}
      = \|e^{-it\Delta^2}\phi(\cdot-x_n)\|_{X([-t_n,+\infty))}
      \rightarrow 0
\end{align*}
as $n\rightarrow\infty$. From this and \eqref{eq:smallness}, we have $\limsup_{n\rightarrow\infty}\|e^{-it\Delta^2}u_n(0)\|_{X([0,+\infty))}=0$. Hence, by Proposition \ref{prop:small_data}, for $n$ sufficiently large, we obtain 
\begin{align*}
      \|u_n\|_{X([0,+\infty))}\leq2\delta_{sd}.
\end{align*}
This contradicts \eqref{eq:blowup_X} and so $t_n\rightarrow-\infty$ does not occur. By the similar argument, $t_n\rightarrow+\infty$ does not occur. Therefore, we may assume that $t_n\equiv0$. Thus, we have
\begin{align*}
      u_n(0)=\phi(\cdot-x_n)+R_n.
\end{align*}

Now, we claim $S_{\omega}(\phi)=L_{\omega}^{*}$. To prove this, suppose $S_{\omega}(\phi)<L_{\omega}^{*}$ and let $v$ be a solution to \eqref{eq:4NLS} with $v(0)=\phi$. Then by the definition of $L_{\omega}^{*}$, we have $\|v\|_{X(\R)}<\infty$. Let
\begin{align*}
      \tilde{u}_n(t,x):=v(t,x-x_n)+e^{-it\Delta^2}R_n(x)
\end{align*}
and 
\begin{align*}
      e_n:=&\ i\pt_t \tilde{u}_n-\Delta^2 \tilde{u}_n
      +|\tilde{u}_n|^{p-1}\tilde{u}_n 
      = |\tilde{u}_n|^{p-1}\tilde{u}_n-|v|^{p-1}v.
\end{align*}
Then $\tilde{u}_n(0)=u_n(0)$ and by Lemma \ref{lem:Kato} and the H\"{o}lder inequality, we have
\begin{align*}
      \|e_n\|_{N(\R)}
      \lesssim&\ \left(\|v\|_{X(\R)}+\|e^{-it\Delta^2}R_n\|_{X(\R)}\right)^{p-1}
      \|e^{-it\Delta^2}R_n\|_{X(\R)}.
\end{align*}
Hence, using $\|v\|_{X(\R)}<\infty$ and $\limsup_{n\rightarrow\infty}\|e^{-it\Delta^2}R_n\|_{X(\R)}=0$, we obtain 
\begin{align*}
      \limsup_{n\rightarrow\infty}\|e_n\|_{N(\R)}=0.
\end{align*}
Thus, by using Proposition \ref{prop:stability}, 
\begin{align}\label{eq:X_tilde_u_n}
      \|u_n\|_{X(\R)}
      \leq \|\tilde{u}_n\|_{X(\R)}+o(1)
\end{align}
as $n\rightarrow\infty$. Since $\|\tilde{u}_n\|_{X(\R)}\leq \|v\|_{X(\R)}+\|e^{-it\Delta^2}R_n\|_{X(\R)}<\infty$ for sufficiently large $n$, \eqref{eq:X_tilde_u_n} contradicts \eqref{eq:blowup_X} and so $S_{\omega}(\phi)=L_{\omega}^{*}$.

Therefore, we have $\limsup_{n\rightarrow\infty}\|R_n\|_{H^2}=0$ and so
\begin{align*}
      u_n(0)=\phi(\cdot-x_n)+o(1)
\end{align*}
in $H^2(\R^d)$. The proof of Lemma \ref{lem:concentration_compactness} is complete if we have proved that $J^{*}\geq2$ does not occur.

{\bf Case 3:} $J^{*}\geq2$. In this case, there exist more than one profiles, and from \eqref{eq:sum_S} and \eqref{eq:S_j>0} there exists $\epsilon>0$ such that 
\begin{align}\label{eq:S_of_v^j}
      0<\limsup_{n\rightarrow\infty}S_{\omega}(e^{it_n^j\Delta^2}\phi^j)
      \leq L_{\omega}^{*}-\epsilon
\end{align}
for all $1\leq j\leq J^{*}$. For each $1\leq j\leq J^{*}$, passing to a subsequence, we may assume that $t_n^j$ has a limit $t_0^j\in\{-\infty,0,\infty\}$. By using Lemma \ref{prop:small_data}, we can find an unique solution $v^j$ of \eqref{eq:4NLS} defined on a neighborhood of $-t_0^j$ such that
\begin{align}\label{eq:H^2_v^j}
      \|v^j(-t_n^j)-e^{it_n^j\Delta^2}\phi^j\|_{H^2}\rightarrow0\ \ \ \mbox{as}\ n\rightarrow\infty.
\end{align}
From \eqref{eq:S_of_v^j} and \eqref{eq:H^2_v^j} we have
\begin{align*}
      S_{\omega}(v^j)\leq L_{\omega}^{*}-\epsilon<S_{\omega}(Q_{\omega}).
\end{align*}
Since $K(e^{it_n^j\Delta^2}\phi^j)>0$ for $n$ sufficiently large, from \eqref{eq:H^2_v^j} we have
\begin{align*}
      K(v^j(-t_n^j))>0
\end{align*}
for $n$ sufficiently large. Hence, from Lemma \ref{lem:invariant_set}, we see that $v^j$ exists globally in time. Let $v_n^j:=v^j(t-t_n^j,x-x_n^j)$, and for each $1\leq J\leq J^{*}$ we define
\begin{align*}
      \tilde{u}_n^J:=\sum_{j=1}^J v_n^j+e^{-it\Delta^2}R_n^J.
\end{align*}
Then 
\begin{align*}
      \|\tilde{u}_n^J\|_{X(\R)}
      \leq&\ \left\| \sum_{j=1}^J v_n^j \right\|_{X(\R)}+\|e^{-it\Delta^2}R_n^J\|_{X(\R)}.
\end{align*}
Since $C_{\omega}(L)$ is sublinear around $0$ and bounded on $[0,L_{\omega}^{*}-\epsilon]$, from \eqref{eq:S_of_v^j} and \eqref{eq:H^2_v^j} we obtain
\begin{align*}
      \sum_{j=1}^J \|v^j\|_{X(\R)}^{q_1}
      \leq&\ \sum_{j=1}^J C_{\omega}\left(S_{\omega}(v^j)\right) 
      \lesssim_{L_{\omega}^{*},\epsilon}\ \sum_{j=1}^J S_{\omega}(v^j) 
      \lesssim_{L_{\omega}^{*},\epsilon}\ 1.
\end{align*}
Hence, by using \eqref{eq:orthogonality}, we have
\begin{align} \label{eq:X_sum_v_n^j}
      \limsup_{n \to \infty} \left\| \sum_{j=1}^{J} v_n^j \right\|_{X(\R)}
      = \left( \sum_{j=1}^{J} \|v^j\|_{X(\R)}^{q_1} \right)^{\frac{1}{q_1}}
      \lesssim_{L_{\omega}^{*}, \epsilon} 1
\end{align}
for all $1 \le J \le J^{*}$.
Therefore, we obtain
\begin{align}\label{eq:X_tilde_u_n^J}
      \lim_{J\rightarrow J^{*}}\limsup_{n\rightarrow\infty}\|\tilde{u}_n^J\|_{X(\R)}
      \lesssim_{L_{\omega}^{*},\epsilon}1.
\end{align}
We define
\begin{align*}
      e_n^J
      :=&\ i\pt_t \tilde{u}_n^J-\Delta^2 \tilde{u}_n^J
      +|\tilde{u}_n^J|^{p-1}\tilde{u}_n^J \\
      =&\ F\left(\sum_{j=1}^J v_n^j+e^{-it\Delta^2}R_n^J\right)-\sum_{j=1}^J F(v_n^j), 
\end{align*}
where $F(z)=|z|^{p-1}z$ for $z\in\C$. Since
\begin{align*}
      &\left\|F\left(\sum_{j=1}^J v_n^j +e^{-it\Delta^2}R_n^J\right)
      -F\left(\sum_{j=1}^J v_n^j\right)\right\|_{N(\R)} \notag \\
      &\lesssim\ \left\| \left( \left|\sum_{j=1}^J v_n^j\right|^{p-1}
      +|e^{-it\Delta^2}R_n^J|^{p-1} \right)
      |e^{-it\Delta^2}R_n^J| \right\|_{N(\R)} \notag \\
      &\lesssim\ \left( \left\| \sum_{j=1}^J v_n^j \right\|_{X(\R)}^{p-1}
      +\|e^{-it\Delta^2}R_n^J\|_{X(\R)}^{p-1} \right)
      \|e^{-it\Delta^2}R_n^J\|_{X(\R)},
\end{align*}
from \eqref{eq:smallness} and \eqref{eq:X_sum_v_n^j}, we have
\begin{align*}
      \lim_{J\rightarrow J^{*}}\limsup_{n\rightarrow\infty}
      \left\|F\left(\sum_{j=1}^J v_n^j +e^{-it\Delta^2}R_n^J\right)
      -F\left(\sum_{j=1}^J v_n^j\right)\right\|_{N(\R)}
      =0.
\end{align*}
By using \eqref{eq:orthogonality}, we obtain
\begin{align*}
      \lim_{J\to J^*}\limsup_{n\to\infty}
      \left\| F\left(\sum_{j=1}^J v_n^j\right)-\sum_{j=1}^J F(v_n^j) \right\|_{N(\R)} 
      \lesssim&\ \lim_{J\to J^*}\limsup_{n\to\infty}
      \sum_{1\leq j\neq k\leq J} \||v_n^j||v_n^k|^{p-1}\|_{N(\R)} \notag \\
      =&\ 0.
\end{align*}
Therefore, we get
\begin{align}\label{eq:N_e_n^J}
      \lim_{J\rightarrow J^{*}}\limsup_{n\rightarrow\infty}\|e_n^J\|_{N(\R)}
      =0.
\end{align}
From \eqref{eq:H^2_v^j} we have
\begin{align}\label{eq:tilde_u_n^J(0)}
      \tilde{u}_n^J(0)
      =\sum_{j=1}^J v^j(-t_n^j,x-x_n^j) +R_n^J
      =u_n(0)+o_J(1),
\end{align}
where $o_J(1)\to0$ in $H^2(\R^d)$ as $n\to\infty$, for each $1\leq J\leq J^*$. 
From \eqref{eq:X_tilde_u_n^J}, \eqref{eq:N_e_n^J} and \eqref{eq:tilde_u_n^J(0)}, using Proposition \ref{prop:stability}, we have
\begin{align*}
      \limsup_{n\rightarrow\infty}\|u_n\|_{X(\R)}
      \leq \lim_{J\rightarrow J^{*}}\limsup_{n\rightarrow\infty}\|\tilde{u}_n^J\|_{X(\R)}
      \lesssim_{L_{G,\omega}^{*},\epsilon} 1.
\end{align*}
This contradicts \eqref{eq:blowup_X} and so $J^{*}\geq2$ does not occur.
\qed \\

\begin{prop}[Existence and precompactness of critical solution]\label{prop:critical_sol}
Let $d\geq1$, and $p$ satisfy \eqref{eq:intercritical}. Suppose $L_{\omega}^{*}<S_{\omega}(Q_{\omega})$. Then there exists $\phi\in H^2(\R^d)$ such that 
\begin{align}\label{eq:S_of_phi}
      S_{\omega}(\phi)=L_{\omega}^{*}\ \ \mbox{and}\ \ 
      K(\phi)\geq0.
\end{align}
Let $u$ be a solution to \eqref{eq:4NLS} with initial data $\phi$. Then
\begin{align}\label{eq:X_of_u}
      \|u\|_{X((-\infty,0])}=\|u\|_{X([0,+\infty))}=\infty.
\end{align}
Moreover, there exists $x:\R\to\R^d$ such that $\{ u(t,\cdot+x(t))\ |\ t\in\R \}$ is precompact in $H^2(\R^d)$.
\end{prop}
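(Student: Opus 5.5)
We follow the standard Kenig--Merle route and use Lemma \ref{lem:concentration_compactness} twice: once to produce the critical element, and once more to show its orbit is precompact.

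\emph{Step 1: an optimizing sequence.} By the definition \eqref{eq:threshold} of $L_{\omega}^*$, we have $C_{\omega}(L)=\infty$ for every $L>L_{\omega}^*$. Since $C_\omega$ is continuous wherever it is finite, and $L_\omega^*<S_\omega(Q_\omega)$, we get $C_{\omega}(L_{\omega}^*)=\infty$. Hence there are solutions $u_n$ with $S_{\omega}(u_n)\le L_{\omega}^*$, $K(u_n(0))\ge 0$ and $\|u_n\|_{X(I_n)}\to\infty$. Each $u_n$ is global by Lemma \ref{lem:invariant_set}, because $S_\omega(u_n)<S_\omega(Q_\omega)$.
\begin{itemize}
\item Passing to a subsequence, $S_\omega(u_n)\to L'\le L_\omega^*$. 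If $L'<L_\omega^*$, then $\|u_n\|_{X(\R)}^{q_1}\le C_\omega(L'+\epsilon)<\infty$ for some small $\epsilon$, a contradiction; so $S_\omega(u_n)\to L_\omega^*$.
\item Since $\|u_n\|_{X(\R)}\to\infty$, continuity of $t\mapsto \|u_n\|_{X((-\infty,t])}$ lets us choose $t_n$ with $\|u_n\|_{X((-\infty,t_n])}=\|u_n\|_{X([t_n,\infty))}\to\infty$.
\end{itemize}
Lemma \ref{lem:concentration_compactness} then gives $x_n$ and $\phi\in H^2$ with $u_n(t_n,\cdot+x_n)\to\phi$ in $H^2$ along a subsequence.

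\emph{Step 2: properties of $\phi$.} Continuity of $S_\omega$ and $K$ in $H^2$ gives $S_\omega(\phi)=L_\omega^*$ and $K(\phi)\ge0$. Here $K(u_n(t_n))\ge0$ holds by Lemma \ref{lem:invariant_set}. Let $u$ be the solution with $u(0)=\phi$; it is global, again by Lemma \ref{lem:invariant_set}.

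Suppose $\|u\|_{X([0,\infty))}<\infty$. Apply Proposition \ref{prop:stability} on $[0,\infty)$ with $\tilde u=u$ and initial data $u_n(t_n,\cdot+x_n)$. The difference of the data tends to $0$ in $H^2$, so by Strichartz and Sobolev embedding the linear evolution of the difference is small in $X$. This yields $\limsup_n\|u_n\|_{X([t_n,\infty))}<\infty$, a contradiction. The backward direction is the same, so \eqref{eq:X_of_u} holds.

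\emph{Step 3: precompactness modulo translations.} Take any sequence $\tau_n\in\R$. The constant sequence $u_n:=u$ satisfies $S_\omega(u)=L_\omega^*$ and $K(u(0))\ge0$, and by \eqref{eq:X_of_u} we have $\|u\|_{X((-\infty,\tau_n])}=\|u\|_{X([\tau_n,\infty))}=\infty$. Applying Lemma \ref{lem:concentration_compactness} with $t_n=\tau_n$ gives $y_n$ such that $u(\tau_n,\cdot+y_n)$ converges in $H^2$ along a subsequence.

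It remains to build a single function $x(t)$ from these sequence-dependent shifts. The plan is to use the equivalent characterization of precompactness modulo translations. For every $\eta>0$ we need $R,C>0$ with, for every $t$,
\begin{align*}
\int_{|\xi|>C}|\xi|^4|\hat u(t,\xi)|^2\,d\xi+\inf_{y}\int_{|x-y|>R}\bigl(|\Delta u|^2+|u|^2\bigr)(t,x)\,dx<\eta .
\end{align*}
If this failed for some $\eta$, there would be a sequence $\tau_n$ violating it with $R_n,C_n\to\infty$. That contradicts the $H^2$ convergence along a subsequence just obtained, which forces uniform tightness in both frequency and space. Once the estimate holds, choose $x(t)$ to be a (near-)minimizer of the infimum, for instance a point maximizing the local $H^2$-mass in a unit ball. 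Then every sequence $u(\tau_n,\cdot+x(\tau_n))$ is tight, and precompactness follows from Rellich--Kondrachov.

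\emph{Main obstacle.} The delicate part is this last step: passing from subsequential compactness with arbitrary shifts $y_n$ to a uniform choice of $x(t)$. One has to check that $x(\tau_n)-y_n$ stays bounded. This should follow because $\phi\neq0$ carries a fixed amount of $H^2$-mass in some ball, and $x(t)$ is chosen to capture at least a fixed fraction of that mass.

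The rest is routine given Lemma \ref{lem:concentration_compactness} and Proposition \ref{prop:stability}.
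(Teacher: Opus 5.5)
Your proposal is correct and follows essentially the paper's route: an optimizing sequence fed into Lemma \ref{lem:concentration_compactness}, Proposition \ref{prop:stability} to get \eqref{eq:X_of_u}, and then Lemma \ref{lem:concentration_compactness} applied to $u(\tau_n)$, with $x(t)$ chosen as a maximizer of local $H^2$-mass in a ball and the lemma's shifts shown to stay a bounded distance from $x(\tau_n)$. The only real difference is in that last step. You use unit balls and a uniform lower bound on the captured mass, which, like the paper's $m(\delta)$, follows from the same compactness-by-contradiction argument. The paper instead uses balls of radius $R(\delta)$ carrying a $(1-\delta)$ fraction of the norm, together with an almost-orthogonality argument.
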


\noindent
{\bf Proof.} Since $C_{\omega}(L):[0,L_{\omega})\to[0,\infty)$ is continuous, non-decreasing and $C_{\omega}(L_{\omega})=\infty$, we can take $L_n$ such that $C_{\omega}(L_n)=n$ and $L_n\nearrow L_{\omega}^*$ as $n\to\infty$.
Then by definition of $C_{\omega}(L_n)$, there exists a sequence of solutions $\{u_n\}\subset H^2(\R^d)$ to \eqref{eq:4NLS} such that $L_{n-1}\leq S_{\omega}(u_n)< L_n$, $K(u_n(0))\geq0$ and $\|u_n\|_{X(\R)}\geq n-\frac12$. 
Let $f(t):=\|u_n\|_{X((-\infty,t])}$. Then we have $f(-\infty)=0$ and $f(+\infty)\geq n-\frac12$.
Since $f(t)$ is continuous, we find $t_n\in\R$ such that 
\begin{align*}
      \tfrac14(n-\tfrac12)\leq f(t_n)\leq \tfrac12(n-\tfrac12),
\end{align*}
and so
\begin{align*}
      \lim_{n\to\infty}\|u_n\|_{X((-\infty,t_n])}
      =\lim_{n\to\infty}\|u_n\|_{X([t_n,+\infty))}
      =\infty.
\end{align*}
Moreover, by Lemma \ref{lem:invariant_set}, $K(u_n(t_n))\geq0$.
Hence, $\{u_n\}$ and $\{t_n\}$ satisfy the hypothesis of Lemma \ref{lem:concentration_compactness}.
Thus, there exists $\phi\in H^2(\R^d)$ and $\{x_n\} \subset \R^d$ such that $u_n(t_n,\cdot+x_n)\to\phi$ in $H^2(\R^d)$.
From this strong convergence, we have \eqref{eq:S_of_phi}.
Let $u$ be a solution to \eqref{eq:4NLS} with $u(0)=\phi$.
Then by Proposition \ref{prop:stability}, we obtain \eqref{eq:X_of_u}.

Now we prove that there exists $x(t)$ such that $\{u(t,\cdot+x(t))|t\in\R\}$ is precompact in $H^2(\R^d)$.
For $u\in H^2(\R^d)$, we define
\begin{align*}
      \lambda(u,R)
      :=&\ \sup_{y\in\R^d}\int_{|x-y|< R}|u|^2+|\Delta u|^2 dx, \\
      \rho(u,\delta)
      :=&\ \inf\{R>0\ |\ \lambda(u,R)>(1-\delta)\|u\|_{H^2}^2\}.
\end{align*}
Note that the supremum in the definition of $\lambda$ is attained since $y\mapsto \int_{|x-y|<R}|u|^2+|\Delta u|^2 dx$ is continuous and converges to $0$ as $|y|\to\infty$.
We first claim that $\rho(u(t),\delta)$ is bounded for all $\delta>0$.
Indeed, if this were not true, then there would exist $\delta>0$ and a sequence $\{t_n\}$ such that
\begin{align*}
      \int_{|x-y|<n}|u(t_n,x)|^2+|\Delta u(t_n,x)|^2 dx
      \leq (1-\delta)\int_{\R^d}|u(t_n,x)|^2+|\Delta u(t_n,x)|^2 dx
\end{align*}
for all $y\in\R^d$ and for all $n$. On the other hand, from Lemma \ref{lem:concentration_compactness} there exist $w_0$ and a sequence $\{Y_n\}$ such that
\begin{align*}
      u(t_n,\cdot+Y_n)\to w_0\ \ \ \mbox{in}\ H^2(\R^d).
\end{align*}
Hence, for any $y\in\R^d$ and $R>0$, we have
\begin{align*}
      \int_{|x-y|<R}|w_0|^2+|\Delta w_0|^2 dx
      =&\ \lim_{n\to\infty}\int_{|x-y|<R}|u(t_n,x+Y_n)|^2+|\Delta u(t_n,x+Y_n)|^2 dx \notag \\
      \leq&\ (1-\delta)\int_{\R^d}|u(t_n,x)|^2+|\Delta u(t_n,x)|^2 dx.
\end{align*}
Thus, $\|w_0\|_{H^2}^2\leq(1-\delta)\|w_0\|_{H^2}^2$, which implies that $\|w_0\|_{H^2}=0$.
This contradicts the fact that 
\begin{align*}
      S_{\omega}(w_0)=\lim_{n\to\infty}S_{\omega}(u(t_n))=L_{\omega}^*>0.
\end{align*}
Consequently, there exists $R(\delta)>0$ such that 
\begin{align*}
      \rho(u(t),\delta)<R(\delta)\ \ \ \mbox{for all}\ t\in\R.
\end{align*}
Similarly, we can show that there exists $m(\delta)>0$ such that 
\begin{align*}
      \lambda(u(t),R(\delta))>m(\delta)\ \ \ \mbox{for all}\ t\in\R.
\end{align*}
Fix $\delta<1/100$. Let $x(t)$ be such that 
\begin{align*}
      \lambda(u(t),R(\delta))
      =\int_{|x-x(t)|<R(\delta)}|u(t,x)|^2+|\Delta u(t,x)|^2 dx.
\end{align*}
We claim that $\{u(t,\cdot+x(t))\ |\ t\in\R\}$ is precompact in $H^2(\R^d)$.
Let $\{t_n\}$ be a sequence of times.
By Lemma \ref{lem:concentration_compactness}, there exists a sequence $\{Y_n\}$ and $w_0$ such that
\begin{align*}
      U(t_n):=u(t_n,\cdot+x(t_n)+Y_n)\to w_0\ \ \ \mbox{in}\ H^2(\R^d).
\end{align*}
In particular, $\{U(t_n)\}$ is a Cauchy sequence. Let $n_0$ be sufficiently large so that for all $n\leq n_0$,
\begin{align}\label{eq:upper_U}
      \|U(t_n)-U(t_{n_0})\|_{H^2}^2
      \leq \tfrac{m(\delta)}{10}
      <\tfrac{1}{20}(\|U(t_n)\|_{H^2}^2+\|U(t_{n_0})\|_{H^2}^2).
\end{align} 
Suppose that there exists a subsequence $\{Y_{n_k}\}$ such that $|Y_{n_k}-Y_{n_0}|\to\infty$ as $k\to\infty$.
Then for $|Y_{n_k}-Y_{n_0}|>2R(\delta)$, we have
\begin{eqnarray*}
      \lefteqn{ \|U(t_{n_k})-U(t_{n_0})\|_{H^2}^2 }\notag \\
      &=&\|U(t_{n_k})\|_{H^2}^2+\|U(t_{n_0})\|_{H^2}^2 \notag \\
      &&-2\re\int_{\R^d}\left(
            \Delta U(t_{n_k}) \Delta \overline{U(t_{n_0})}+U(t_{n_k})\overline{U(t_{n_0})}
      \right)dx \notag \\
      &=&\|U(t_{n_k})\|_{H^2}^2+\|U(t_{n_0})\|_{H^2}^2 \notag \\
      &&-2\re\left(\int_{|x-Y_{n_k}|<R(\delta)}+\int_{|x-Y_{n_k}|\geq R(\delta)}\right)\left(
            \Delta U(t_{n_k}) \Delta \overline{U(t_{n_0})}+U(t_{n_k})\overline{U(t_{n_0})}
      \right)dx \notag \\
      &\geq&\|U(t_{n_k})\|_{H^2}^2+\|U(t_{n_0})\|_{H^2}^2
      -2\delta^{\frac12}\|U(t_{n_k})\|_{H^2}\|U(t_{n_0})\|_{H^2},
\end{eqnarray*}
where we used 
\begin{eqnarray*}
      \lefteqn{ \int_{|x-Y_{n_k}|\geq R(\delta)}|\Delta U(t_{n_k})|^2+|U(t_{n_k})|^2 dx} \notag \\
      &=&\int_{|z|\geq R(\delta)}|\Delta u(t_{n_k},z+x(t_{n_k}))|^2
      +|u(t_{n_k},z+x(t_{n_k})|^2 dz \notag \\
      &\leq&\delta\|u(t_{n_k})\|_{H^2}^2
      =\delta\|U(t_{n_k})\|_{H^2}^2,
\end{eqnarray*}
which follows from $\rho(u(t_{n_k}),\delta)<R(\delta)$ and the definition of $\rho$.
Since $\delta<1/100$, we obtain
\begin{align*}
      \|U(t_{n_k})-U(t_{n_0})\|_{H^2}^2
      \geq \tfrac12(\|U(t_{n_k})\|_{H^2}^2+\|U(t_{n_0})\|_{H^2}^2)
\end{align*}
which contradicts \eqref{eq:upper_U}.
Consequently, $\{Y_n\}$ is a bounded sequence, and so we can assume that $Y_n\to Y_0$ passing to a subsequence if necessary.
Therefore, passing to a subsequence, we have
\begin{align*}
      u(t_n,\cdot+x(t_n))
      =U(t_n,\cdot-Y_n)
      \to w_0(\cdot-Y_0)\ \ \ \mbox{in}\ H^2(\R^d).
\end{align*}
Thus, we conclude that $\{u(t,\cdot+x(t))\ |\ t\in\R\}$ is precompact in $H^2(\R^d)$.
\qed

\section{Interaction virial identity}\label{sec:virial}

In this section, we establish the spatially localized interaction virial identity for the fourth-order Schr\"{o}dinger equation \eqref{eq:4NLS}.

\begin{lem}[Tightness in $H^2$]\label{lem:tightness}
Let $u$ be a global solution to \eqref{eq:4NLS} such that $\{u(t,\cdot+x(t))\ |\ t\in\R\}$ is precompact in $H^2(\R^d)$ for some function $x(\cdot)$. Then for all $\eta>0$, there exists $C(\eta)>0$ such that 
\begin{align*}
      \int_{|x-x(t)|\geq C(\eta)}|u(t,x)|^2+|\Delta u(t,x)|^2+|u(t,x)|^{p+1} dx
      \leq\eta
\end{align*}
for all $t\in\R$.
\end{lem}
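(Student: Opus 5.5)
We argue by the standard compactness-to-tightness argument, using precompactness to control the $L^2$ and $\dot H^2$ tails and a Gagliardo--Nirenberg type bound to pass to the $L^{p+1}$ tail.

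\textbf{Step 1: reduce to a tail estimate for the translated family.} Set $K:=\{u(t,\cdot+x(t)) \mid t\in\R\}$, which is precompact in $H^2(\R^d)$ by hypothesis. Changing variables $x\mapsto x+x(t)$, it suffices to find $C(\eta)$ such that
\begin{align*}
      \sup_{f\in K}\int_{|x|\geq C(\eta)}|f|^2+|\Delta f|^2+|f|^{p+1}\,dx\leq\eta .
\end{align*}

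\textbf{Step 2: the $L^2$ and $\dot H^2$ tails.} Fix $\epsilon>0$. By total boundedness, cover $\overline{K}$ by finitely many $H^2$-balls $B(f_1,\epsilon),\dots,B(f_N,\epsilon)$. For each single $f_k\in H^2$, dominated convergence gives $R_k$ with
\begin{align*}
      \int_{|x|\geq R_k}|f_k|^2+|\Delta f_k|^2\,dx\leq\epsilon^2 .
\end{align*}
Let $R:=\max_k R_k$. For any $f\in K$, choose $k$ with $\|f-f_k\|_{H^2}<\epsilon$. The triangle inequality in $L^2(|x|\geq R)$, applied to both $f$ and $\Delta f$, then bounds the $L^2$ and $\dot H^2$ tails of $f$ by $\lesssim\epsilon^2$.

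\textbf{Step 3: the $L^{p+1}$ tail.} Take $\chi$ smooth with $\chi=0$ on $|x|\le R/2$ and $\chi=1$ on $|x|\geq R$, so that $\int_{|x|\geq R}|f|^{p+1}\le\|\chi f\|_{L^{p+1}}^{p+1}$. Since $p$ is $\dot H^2$-subcritical, the Sobolev/Gagliardo--Nirenberg inequality gives
\begin{align*}
      \|\chi f\|_{L^{p+1}}\lesssim\|\chi f\|_{H^2}.
\end{align*}
Expanding $\Delta(\chi f)$ with the Leibniz rule produces terms with $\nabla f$ on $\{|x|\geq R/2\}$. These are handled by running Step 2 with $R/2$ in place of $R$ and, for the gradient, either using the interpolation bound $\|\nabla g\|_{L^2}^2\le\|g\|_{L^2}\|\Delta g\|_{L^2}$ applied to $\chi f$, or running the same finite-net argument directly on $\nabla f$, which lies in a precompact subset of $H^1$. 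This bounds the whole expression by $\lesssim\epsilon^{p+1}$, uniformly over $K$.

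A cleaner alternative for this step is to note that $f\mapsto\int_{|x|\geq R}|f|^{p+1}$ is continuous on $H^2$, uniformly in $R$, by Hölder and Sobolev. The finite-net argument of Step 2 then applies to it directly, with no commutator terms.

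\textbf{Step 4: conclude.} Choose $\epsilon$ small in terms of $\eta$ and set $C(\eta):=R$ (or $R/2$, depending on which variant of Step 3 is used). The only mildly delicate point is the uniform continuity of the $L^{p+1}$ tail functional, which follows from the embedding $H^2\hookrightarrow L^{p+1}$ guaranteed by \eqref{eq:intercritical}.
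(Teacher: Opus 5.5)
Your proof is correct and is essentially the paper's argument. The paper argues by contradiction: it extracts an $H^2$-convergent subsequence of $u(t_n,\cdot+x(t_n))$ and combines the small tail of the limit with the triangle inequality and Sobolev embedding, which is just the sequential-compactness version of your finite $\epsilon$-net. Your ``cleaner alternative'' in Step 3 is exactly how the paper handles the $L^{p+1}$ tail, so the cutoff-and-Leibniz variant is unnecessary.
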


\noindent
{\bf Proof.} Suppose that Lemma \ref{lem:tightness} does not hold. Then there exist $\eta_0>0$, $R_n>0$ and $t_n\in\R$ such that $R_n\rightarrow\infty$ and
\begin{align}\label{eq:>eta_0}
      \int_{|x-x(t_n)|\geq R_n}|u(t_n,x)|^2+|\Delta u(t_n,x)|^2+|u(t_n,x)|^{p+1} dx > \eta_0
\end{align}
for all $n$. Since $\{ u(t,\cdot+x(t_n))\ |\ t\in\R \}$ is precompact in $H^2(\R^d)$, there exists $\phi\in H^2(\R^d)$ such that $u(t_n,\cdot+x(t_n))$ has a subsequence that converges to $\phi$ in $H^2(\R^d)$. In particular, there exists $n_1$ such that
\begin{align}\label{eq:H^2_u-phi}
      \|u(t_n,\cdot+x(t_n))-\phi\|_{H^2}^2<\tfrac{1}{4}\eta_0
\end{align}
for all $n\geq n_1$. On the other hand, since $R_n\rightarrow\infty$, there exists $n_2$ such that
\begin{align}\label{eq:H^2(|x|>R_n)_phi}
      \int_{|x|\geq R_n} |\phi(x)|^2+|\Delta \phi(x)|^2+|\phi|^{p+1} dx<\tfrac{1}{4}\eta_0
\end{align}
for all $n\geq n_2$. By the triangle inequality and the Sobolev inequality, \eqref{eq:H^2_u-phi} and \eqref{eq:H^2(|x|>R_n)_phi} contradict \eqref{eq:>eta_0} when $n\geq\max\{n_1,n_2\}$.
\qed \\

\begin{prop}[Localized interaction virial identity]\label{prop:virial}
Let $d\geq1$, and $p$ satisfy \eqref{eq:intercritical}. Let $u$ be a global solution to \eqref{eq:4NLS} such that $\|u\|_{L_t^{\infty}H_x^2(\R\times\R^d)}<\infty$ and $\{u(t,\cdot+x(t))\ |\ t\in\R\}$ is precompact in $H^2(\R^d)$ for some function $x(\cdot)$. Then for any $\epsilon>0$, 
\begin{align}\label{eq:interact_virial}
      \left|
      M(u)\tfrac{1}{|I|}\int_I K(u(t)) dt
      +2P(u)\cdot \tfrac{1}{|I|}\int_I \im\int_{\R^d}\Delta \overline{u}(t,x)\nabla u(t,x)dx dt
      \right|
      \leq \epsilon
\end{align}
holds for all sufficiently large intervals $I\subset\R$.
\end{prop}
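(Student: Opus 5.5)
We follow the interaction Morawetz strategy of Pausader--Shao, adapted to the mass-supercritical nonlinearity. Fix a smooth radial cutoff $\psi_R(x)=R^2\psi(x/R)$ with $\psi(x)=|x|^2/2$ for $|x|\le1$ and $\psi$ constant for $|x|\ge2$. Define the localized interaction functional
\begin{align*}
      \mathcal{M}_R(t):=\iint_{\R^d\times\R^d}|u(t,y)|^2\,\nabla\psi_R(x-y)\cdot\im\bigl(\overline{u}\nabla u\bigr)(t,x)\,dx\,dy .
\end{align*}
By Cauchy--Schwarz and $|\nabla\psi_R|\lesssim R$, we have $|\mathcal{M}_R(t)|\lesssim R\,M(u)\|u\|_{L^\infty_tH^2_x}^2$. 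Hence for every interval $I$,
\begin{align*}
      \Bigl|\tfrac{1}{|I|}\int_I \mathcal{M}_R'(t)\,dt\Bigr|\lesssim \tfrac{R}{|I|}.
\end{align*}

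\textbf{Computing $\mathcal{M}_R'$.} Use the local conservation laws of \eqref{eq:4NLS}: $\pt_t|u|^2=-2\nabla\cdot\im(\overline{u}\nabla\Delta u)+\dots$ (the fourth-order mass current), together with the momentum density law $\pt_t\im(\overline{u}\pt_k u)=\pt_j(\cdots)$, whose flux contains $\re(\pt_j\pt_k\Delta\overline u\,u)$-type terms and the nonlinear term $\tfrac{p-1}{p+1}\delta_{jk}|u|^{p+1}$. Differentiating $\mathcal{M}_R$ and integrating by parts gives two groups of terms.
\begin{enumerate}\renewcommand{\labelenumi}{(\alpha{enumi})}
\item Terms where the time derivative hits the $x$-factor. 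On the region $|x-y|\le R$, where $\nabla^2\psi_R=\mathcal{I}_d$ and the higher derivatives of $\psi_R$ vanish, these terms reproduce $|u(y)|^2\bigl(2|\Delta u(x)|^2-\tfrac{d(p-1)}{2(p+1)}|u(x)|^{p+1}\bigr)$. Integrating in $y$ gives $M(u)K(u(t))$, up to errors supported in $|x-y|\ge R$.
\item Terms where the time derivative hits $|u(y)|^2$. Here the mass current in $y$ pairs with $\nabla^2\psi_R(x-y)$, which equals $\mathcal{I}_d$ for $|x-y|\le R$. This produces $2\,\im\int\Delta\overline u\nabla u\,dy\cdot\im\int\overline u\nabla u\,dx=2P(u)\cdot\im\int\Delta\overline u\nabla u$, again up to errors on $|x-y|\ge R$. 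The sign and constant will be fixed by a careful but routine computation. The term with $\nabla^3\psi_R$ also lives only in $R\le|x-y|\le2R$.
\end{enumerate}

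\textbf{Bounding the errors.} All error terms are bounded by
\begin{align*}
      \iint_{|x-y|\ge R}\bigl(|u(y)|^2+|\nabla u(y)|^2+\dots\bigr)\bigl(|u(x)|^2+|\Delta u(x)|^2+|u(x)|^{p+1}+\dots\bigr)\,dx\,dy
\end{align*}
times constants uniform in $R$, since $|\nabla^k\psi_R|\lesssim R^{2-k}\le1$ for $k\ge2$ and $R\ge1$. Intermediate derivatives are controlled by interpolation between $L^2$ and $\dot H^2$. If $|x-y|\ge R$, then either $|x-x(t)|\ge R/2$ or $|y-x(t)|\ge R/2$. By Lemma~\ref{lem:tightness}, with the lower-order densities controlled through $H^2$ tightness and Gagliardo--Nirenberg, each error is at most $C\eta\|u\|_{L^\infty H^2}^2$ once $R\ge 2C(\eta)$. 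This holds uniformly in $t$, and the point is that we need no control on $x(t)$: the interaction form only sees $x-y$.

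\textbf{Conclusion and main obstacle.} Choosing $\eta$ so that the error is at most $\epsilon/2$, fixing $R$, and then taking $|I|\gg R/\epsilon$ yields \eqref{eq:interact_virial}. The main obstacle is the algebra of the fourth-order momentum flux. Its stress tensor involves $\re(\pt_j\pt_k\overline u\,\Delta u)$, $\re(\pt_j\overline u\,\pt_k\Delta u)$ and $\Delta^2$-type terms, and we must verify that, against the weight $\nabla^2\psi_R=\mathcal{I}_d$ on the inner region, these collapse to exactly $2|\Delta u|^2$, with all remaining pieces carrying $\nabla^{3}\psi_R$ or higher, hence supported in the annulus. We would first do this computation for the unlocalized weight $|x|^2/2$, as in Pausader--Shao, and then track where derivatives fall on $\psi_R$.
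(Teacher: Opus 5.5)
Your proposal is correct and is essentially the paper's argument. The shared ingredients are: a truncated quadratic interaction weight with $\nabla^2\psi_R=\mathcal{I}_d$ on $\{|x-y|\le R\}$; the $M(u)K(u(t))$ and $P(u)\cdot\im\int\Delta\overline{u}\nabla u\,dx$ terms coming from the time derivative landing on the $x$- and $y$-factors respectively; all remaining terms supported in $\{|x-y|\ge R\}$ and hence small uniformly in $t$ by tightness about $x(t)$, with no information on $x(t)$ needed; and $|\mathcal{M}_R|\lesssim R$ to average over long intervals. The differences are cosmetic. Your weight is constant rather than linear beyond $2R$, and you bound the pieces carrying third or higher derivatives of the weight by tightness where the paper uses $\|\nabla^j\Psi_R\|_{L^\infty}\lesssim R^{2-j}$. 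Also, in your normalization the computation gives $\mathcal{M}_R'=2M(u)K(u(t))+4P(u)\cdot\im\int\Delta\overline{u}\nabla u\,dx+o_R(1)$; the inner-region density is $|\nabla^2u|^2$, which becomes $\|\Delta u\|_{L^2}^2$ only after a global integration by parts. Both of your individual constants are thus off by the same factor $2$, so the ratio in \eqref{eq:interact_virial} is exactly as you predicted.
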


\noindent
{\bf Proof.} Let $\psi\in C_0^{\infty}([0,\infty))$ be a smooth, non-increasing function and such that $\psi(s)=1$ for $0\leq s\leq1$ and $\psi(s)=0$ for $s\geq2$. For each $R>0$, we define
\begin{align*}
      \Psi_R(x):=R^2\int_0^{\frac{|x|}{R}}\int_0^r \psi(s) dsdr.
\end{align*}
Then for all $j\geq1$, we have
\begin{align}
      \|\nabla^j\Psi_R\|_{L^{\infty}}\lesssim R^{2-j}. \label{eq:Psi_R_0}
\end{align}
Let we denote radial derivative by $\pa_r=\tfrac{\pa}{\pa r}$ with $r=|x|$. Since $\psi(s)=1$ for $0\leq s\leq 1$, we see that
\begin{align}
      \tfrac{\pa_r\Psi_R}{|x|}-1
      =&\ \tfrac{R}{|x|}\int_0^{\frac{|x|}{R}}\psi(s)ds-1 =0, \label{eq:Psi_R_1} \\
      \pa_r^2\Psi_R-\tfrac{\pa_r\Psi_R}{|x|}
      =&\ \left\{\psi\left(\tfrac{|x|}{R}\right)-1\right\}+\left\{1-\tfrac{\pa_r\Psi_R}{|x|}\right\} =0
      \label{eq:Psi_R_2}
\end{align}
for $|x|\leq R$. Moreover, since $\psi$ is non-increasing, we see that
\begin{align}
      \pa_r^2\Psi_R-\tfrac{\pa_r\Psi_R}{|x|}
      = \tfrac{R}{|x|}\int_0^{\frac{|x|}{R}}\left\{\psi\left(\tfrac{|x|}{R}\right)-\psi(s)\right\}ds \leq0
\end{align}
for all $x\in\R^d$. For $u\in C(\R,H^2(\R^d))$ and $R>0$, we define
\begin{align*}
      V_R^{interact}(t)
      :=&\ 2\iint_{\R^d\times\R^d}|u(t,y)|^2\nabla\Psi_R(x-y)
      \cdot\im[\overline{u}(t,x)\nabla u(t,x)]dxdy \\
      =&\ \int_{\R^d}|u(t,y)|^2\langle iu(t)|Z_y u(t) \rangle dy,
\end{align*}
where $\langle f|g \rangle:=\re\int_{\R^d}f(x)\overline{g}(x) dx$ and
\begin{align*}
      Z_y:= \nabla\Psi_R(x-y)\cdot\nabla_x+\nabla_x\cdot\nabla\Psi_R(x-y).
\end{align*}
We note that by using $Z_y=2\nabla\Psi_R(x-y)\cdot\nabla_x+\Delta\Psi_R(x-y)$, we have
\begin{align*}
      \langle iu(t)|Z_y u(t) \rangle
      = 2\im\int_{\R^d}\nabla\Psi_R(x-y)\cdot\overline{u}(t,x)\nabla u(t,x)dx.
\end{align*}
If $u$ solves \eqref{eq:4NLS}, then 
\begin{align}
      \tfrac{d}{dt}V_R^{interact}(t)
      =&\ 2\im\int_{\R^d}\overline{u}(t,y)\Delta^2 u(t,y)\langle iu(t)|Z_y u(t) \rangle dy \label{eq:V1} \\
      &\ +\int_{\R^d}|u(t,y)|^2\pa_t\langle iu(t)|Z_y u(t) \rangle dy. \label{eq:V2}
\end{align}
First, we estimate \eqref{eq:V1}. Using the integration by parts, we obtain
\begin{align}
      \eqref{eq:V1}
      =&\ 4\im\int_{\R^d}\Delta u(t,y)\nabla\overline{u}(t,y)
      \cdot\nabla_y\langle iu(t)|Z_y u(t) \rangle dy \label{eq:V11} \\
      &\ -2\im\int_{\R^d}\overline{u}(t,y)\nabla u(t,y)
      \cdot\nabla_y\Delta_y \langle iu(t)|Z_y u(t) \rangle dy. \label{eq:V12}
\end{align}
Applying \eqref{eq:Psi_R_1}, \eqref{eq:Psi_R_2} and
\begin{align}\label{eq:Psi_R_kl}
      \pa_{kl}\Psi_R
      =\left(\delta_{kl}-\tfrac{x_k x_l}{|x|^2}\right)\tfrac{\pa_r\Psi_R}{|x|}
      +\tfrac{x_k x_l}{|x|^2}\pa_r^2\Psi_R,
\end{align}
we have
\begin{align*}
      \eqref{eq:V11}
      =&\ 8\iint_{\R^d\times\R^d}\im[\Delta\overline{u}\pa_k u](t,y)
      \pa_{kl}\Psi_R(x-y)\im[\overline{u}\pa_l u](t,x) dxdy \\
      =&\ 8P(u)\cdot\im\int_{\R^d}\Delta \overline{u}(t,x)\nabla u(t,x) dx + A_R^1(t),
\end{align*}
where $z=x-y$ and 
\begin{align*}
      A_R^1(t) 
      =&\ 8\iint_{|z|\geq R}\left\{ \im[\Delta\overline{u}\nabla u](t,y)
      \left(\tfrac{\pa_r\Psi_R(z)}{|z|}-1\right)
      \im[\overline{u}\nabla u](t,x) \right. \notag \\
      &\ \left. +\im[\Delta\overline{u}\pa_k u](t,y)
      \tfrac{z_k z_l}{|z|^2}\left(\pa_r^2\Psi_R(z)-\tfrac{\pa_r\Psi_R(z)}{|z|}\right)
      \im[\overline{u}\pa_l u](t,x) \right\} dxdy.
\end{align*}
By using Lemma \ref{lem:tightness}, we have 
\begin{align*}
      A_R^1(t)
      \lesssim&\ \|u\|_{L_t^{\infty}H_x^2(\R\times \R^d)}^{\frac32}
      \left(\int_{|y-x(t)|\geq \frac{R}{2}}|\Delta u(t,y)|^2 dy\right)^{\frac12} \\
      &\ +\|u\|_{L_t^{\infty}H_x^2(\R\times \R^d)}^{\frac32}
      \left(\int_{|x-x(t)|\geq \frac{R}{2}}|u(t,x)|^2 dx\right)^{\frac12} \\
      =&\ o_R(1),
\end{align*}
where $o_R(1)\to0$ as $R\to\infty$ uniformly in $t\in\R$.
Using \eqref{eq:Psi_R_0}, we have
\begin{align*}
      \eqref{eq:V12}
      \lesssim R^{-2}M(u)\|u\|_{L_t^{\infty}\dot{H}_x^1(\R\times \R^d)}^2
      = o_R(1).
\end{align*}
Therefore, we obtain
\begin{align}\label{eq:est_V1}
      \eqref{eq:V1}
      = 8P(u)\cdot\im\int_{\R^d}\Delta\overline{u}(t,x)\nabla u(t,x)dx+o_R(1).
\end{align}
Next, we estimate \eqref{eq:V2}. Since $Z_y^*=-Z_y$, we have
\begin{align*}
      \pa_t\langle iu|Z_y u \rangle
      =&\ 2\langle iu_t|Z_y u \rangle
      =2\langle \Delta^2 u-|u|^{p-1}u|Z_y u \rangle \\
      =&\ \langle [\Delta^2,Z_y]u|u \rangle-\langle [|u|^{p-1},Z_y]u|u \rangle,
\end{align*}
where $[A,B]=AB-BA$. To estimate $\langle [\Delta^2,Z_y]u|u \rangle$, we use two commutator identities. First, we have
\begin{align}
      [\Delta,Z_y]=4\pa_k(\pa_{kl}\Psi_R)\pa_l+(\Delta^2\Psi_R). \label{eq:commutator_1}
\end{align}
Second, by using \eqref{eq:commutator_1} and the identities $\Delta A+A \Delta=2\pa_k A\pa_k+[\pa_k,[\pa_k,A]]$ for an operator $A$, we have
\begin{align}
      [\Delta^2,Z_y]
      =&\ \Delta[\Delta,Z_y]+[\Delta,Z_y]\Delta \notag \\
      =&\ 2\pa_k[\Delta,Z_y]\pa_k+[\pa_k,[\pa_k,[\Delta,Z_y]]] \notag \\
      =&\ 8\pa_{kl}(\pa_{lm}\Psi_R)\pa_{mk}+4\pa_k(\pa_{kl}\Delta\Psi_R)\pa_l
      +2\pa_k(\Delta^2\Psi_R)\pa_k+(\Delta^3\Psi_R). \label{eq:commutator_2}
\end{align}
By using \eqref{eq:commutator_2}, we integrate by parts to obtain
\begin{align}
      \langle [\Delta^2,Z_y]u|u \rangle
      =&\ 8\re\int_{\R^d}\pa_{kl}\overline{u}(\pa_{lm}\Psi_R)\pa_{mk}udx \label{eq:V21} \\
      &\ -4\re\int_{\R^d}\pa_k\overline{u}(\pa_{kl}\Delta\Psi_R)\pa_l udx 
      -2\re\int_{\R^d}\Delta^2\Psi_R|\nabla u|^2dx \label{eq:V22} \\
      &\ +\int_{\R^d}\Delta^3\Psi_R|u|^2dx. \label{eq:V23}
\end{align}
Applying \eqref{eq:Psi_R_kl}, we have
\begin{align*}
      \eqref{eq:V21}
      =&\ 8\int_{\R^d}|\Delta u(t,x)|^2 dx 
      + 8\int_{\R^d}\left\{ 
      \left(\tfrac{\pa_r\Psi_R(z)}{|z|}-1\right)|\Delta u(t,x)|^2 \right. \notag \\
      &\ \left. +\left(\pa_r^2\Psi_R(z)-\tfrac{\pa_r\Psi_R(z)}{|z|}\right)
      |\tfrac{z}{|z|}\cdot\nabla\pa_k u(t,x)|^2 
      \right\} dx.
\end{align*}
with $z=x-y$. By using \eqref{eq:Psi_R_0}, we have
\begin{align*}
      \eqref{eq:V22}+\eqref{eq:V23}
      \lesssim&\ R^{-2}\|u\|_{L_t^{\infty}\dot{H}_x^1(\R\times\R^d)}^2
      +R^{-4}M(u).
\end{align*}
By using $[|u|^{p-1},Z_y]=-2\nabla\Psi_R\cdot\nabla(|u|^{p-1})$, we integrate by parts to obtain
\begin{align}
      -\langle [|u|^{p-1},Z_y]u|u \rangle
      =&\ 2\int_{\R^d}|u|^2\nabla\Psi_R\cdot\nabla(|u|^{p-1})dx \notag \\
      =&\ \tfrac{2(p-1)}{p+1}\int_{\R^d}\nabla\Psi_R\cdot\nabla(|u|^{p+1})dx \notag \\
      =&\ -\tfrac{2(p-1)}{p+1}\int_{\R^d}\Delta\Psi_R|u|^{p+1}dx. \label{eq:V24}
\end{align}
Applying $\Delta\Psi_R=(d-1)\tfrac{\pa_r\Psi_R}{|x|}+\pa_r^2\Psi_R$, we have
\begin{align*}
      \eqref{eq:V24}
      =&\ -\tfrac{2d(p-1)}{p+1}\int_{\R^d}|u(t,x)|^{p+1}dx
      -\tfrac{2(p-1)}{p+1}\int_{\R^d}\left\{ d\left(\tfrac{\pa_r\Psi_R(z)}{|z|}-1\right) \right. \notag \\
      &\ \left. +\left(\pa_r^2\Psi_R(z)-\tfrac{\pa_r\Psi_R(z)}{|z|}\right) \right\}|u(t,x)|^{p+1}dx.
\end{align*}
Hence, by applying \eqref{eq:Psi_R_1} and \eqref{eq:Psi_R_2}, we have
\begin{align*}
      \eqref{eq:V2}
      =&\ 4M(u)K(u(t))+A_R^2(t)
      +O\left(R^{-2}M(u)\|u\|_{L_t^{\infty}\dot{H}_x^1(\R\times\R^d)}^2+R^{-4}M(u)^2\right),
\end{align*}
where $z=x-y$ and
\begin{align*}
      A_R^2(t)
      =&\ 4\iint_{|z|\geq R}|u(t,y)|^2
      \left\{
      \left(\tfrac{\pa_r\Psi_R(z)}{|z|}-1\right)
      \left(2|\Delta u(t,x)|^2-\tfrac{d(p-1)}{2(p+1)}|u(t,x)|^{p+1}\right)
      \right. \notag \\
      &\ \left. 
      +\left(\pa_r^2\Psi_R(z)-\tfrac{\pa_r\Psi_R(z)}{|z|}\right)
      \left(2|\tfrac{z}{|z|}\cdot\nabla\pa_k u(t,x)|^2-\tfrac{p-1}{2(p+1)}|u(t,x)|^{p+1}\right)
      \right\} dxdy.
\end{align*}
By using Lemma \ref{lem:tightness} and the Sobolev inequality, we have
\begin{align*}
      A_R^2(t)
      \lesssim&\ \|u\|_{L_t^{\infty}H_x^2(\R\times \R^d)}^2
      (1+\|u\|_{L_t^{\infty}H_x^2(\R\times \R^d)}^{p-1})
      \int_{|y-x(t)|\geq \frac{R}{2}} |u(t,y)|^2 dy \\
      &\ +M(u)\int_{|x-x(t)|\geq \frac{R}{2}}
      \left( |\Delta u(t,x)|^2+|u(t,x)|^{p+1} \right) dx \\
      =&\ o_R(1),
\end{align*}
where $o_R(1)\to0$ as $R\to\infty$ uniformly in $t\in\R$. Therefore, we obtain
\begin{align}\label{eq:est_V2}
      \eqref{eq:V2}
      =4M(u)K(u(t))+o_R(1).
\end{align}
From \eqref{eq:V1}, \eqref{eq:V2}, \eqref{eq:est_V1} and \eqref{eq:est_V2}, we have
\begin{align}\label{eq:est_virial}
      \tfrac{d}{dt}V_R^{interact}(t)
      = 4M(u)K(u(t))+8P(u)\cdot\im\int_{\R^d}\Delta\overline{u}(t,x)\nabla u(t,x)dx+o_R(1),
\end{align}
where $o_R(1)\to0$ as $R\to\infty$ uniformly in $t\in\R$. Integrating \eqref{eq:est_virial} on interval $I$, we obtain
\begin{align*}
      &\ \left|M(u)\int_{I}K(u(t))dt
      +2P(u)\cdot\int_{I}\im\int_{\R^d}\Delta\overline{u}(t,x)\nabla u(t,x)dxdt\right| \notag \\
      \lesssim&\ \sup_{t\in I}|V_R^{interact}(t)|+o_R(1)|I|.
\end{align*}
By the definition of $V_R^{interact}$, we have
\begin{align*}
      \sup_{t\in \R}|V_R^{interact}(t)|\lesssim R\|u\|_{L_t^{\infty}H_x^2(\R\times\R^d)}^4.
\end{align*}
Thus, for any $\epsilon>0$, we can choose $R>0$ sufficiently large so that there exists $C_{\epsilon}>0$ such that 
\begin{align*}
      &\ \left|M(u)\int_{I}K(u(t))dt
      +2P(u)\cdot\int_{I}\im\int_{\R^d}\Delta\overline{u}(t,x)\nabla u(t,x)dxdt\right|
      \leq C_{\epsilon}+\tfrac{\epsilon}{2}|I|
\end{align*}
holds for any interval $I\subset\R$. Therefore, for any $\epsilon>0$, \eqref{eq:interact_virial} holds for all sufficiently large intervals $I\subset\R$.
\qed

\begin{cor}\label{cor:zero_mom}
Let $d\geq1$, and $p$ satisfy \eqref{eq:intercritical}. 
Let $u$ be a global solution to \eqref{eq:4NLS} with initial data $u_0$ satisfying \eqref{eq:below_gs}  such that $\{u(t,\cdot+x(t))\ |\ t\in\R\}$ is precompact in $H^2(\R^d)$ for some function $x(\cdot)$.
If $P(u_0)=0$ or 
\begin{align}\label{eq:C=0}
      \int_0^T\im\int_{\R^d}\Delta\overline{u}(t,x)\nabla u(t,x)dxdt
      =o(T)
\end{align}
as $T\to\infty$, then $M(u)=0$.
\end{cor}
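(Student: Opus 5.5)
Under either hypothesis, the momentum term in Proposition \ref{prop:virial} vanishes in time average, and what remains is $M(u)\langle K(u)\rangle\approx 0$. We will show that $\langle K(u)\rangle$ is bounded below by a positive constant times $M(u)$; this forces $M(u)=0$.

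First, Proposition \ref{prop:virial} applies. Since $u_0$ satisfies \eqref{eq:below_gs}, Lemma \ref{lem:invariant_set} gives $u$ global with $S_\omega(u(t))<S_\omega(Q_\omega)$ and $K(u(t))\ge0$ for all $t$. Its proof gives $\|u(t)\|_{H^2}^2\lesssim \tilde S_\omega(u(t))\le S_\omega(u_0)$, so $\|u\|_{L^\infty_tH^2_x}<\infty$. Precompactness is assumed. Taking $I=[0,T]$ in \eqref{eq:interact_virial} and letting $T\to\infty$, we get
\begin{align*}
M(u)\tfrac1T\int_0^T K(u(t))dt + 2P(u)\cdot\tfrac1T\int_0^T\im\int_{\R^d}\Delta\overline{u}\nabla u\,dxdt\to0 .
\end{align*}
If $P(u_0)=0$, the second term is identically zero. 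If \eqref{eq:C=0} holds, the second term is $P(u)\cdot o(T)/T\to0$. In both cases $M(u)\frac1T\int_0^TK(u(t))dt\to0$.

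Next, we bound $K$ from below. Suppose $u_0\neq0$, so that $M(u)>0$; otherwise there is nothing to prove. If $K(u(t))=0$ for some $t$ with $u(t)\ne0$, then \eqref{eq:def_gs_2} gives $S_\omega(u(t))\ge S_\omega(Q_\omega)$, a contradiction. Hence $K(u(t))>0$ for all $t$, and Lemma \ref{lem:coercivity} applies with the fixed ratio $S_\omega(u_0)/S_\omega(Q_\omega)<1$:
\begin{align*}
K(u(t))\ge 2\Bigl\{1-\bigl(\tfrac{S_\omega(u_0)}{S_\omega(Q_\omega)}\bigr)^{\frac{p-1}{2}}\Bigr\}\|\Delta u(t)\|_{L^2}^2=:c_0\|\Delta u(t)\|_{L^2}^2 ,
\end{align*}
with $c_0>0$.

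The remaining task, and the main obstacle, is a uniform lower bound on $\|\Delta u(t)\|_{L^2}$. We get it from the Gagliardo--Nirenberg inequality $\|u\|_{L^{p+1}}^{p+1}\lesssim M(u)^{a}\|\Delta u\|_{L^2}^{b}$ with $b=\frac{d(p-1)}{4}>2$ (because $p>1+8/d$). Combined with $S_\omega(u)\le 2\tilde S_\omega(u)$ whenever $K\ge0$, this gives
\begin{align*}
0<\tfrac{\omega}{2}M(u)\le S_\omega(u_0)=\tfrac{\omega}{2}M(u)+\tfrac12\|\Delta u(t)\|_{L^2}^2-\tfrac1{p+1}\|u(t)\|_{L^{p+1}}^{p+1}.
\end{align*}
This identity alone does not bound $\|\Delta u(t)\|_{L^2}$ from below, because the mass term can absorb $S_\omega(u_0)$. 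We therefore use the energy instead. We have $E(u)\ge \frac{2}{d(p-1)}K(u)\cdot 0+\tilde S_\omega-\frac\omega2M\ge \frac{s_c}{d}\|\Delta u\|^2>0$. Since $E$ is conserved and $E\le\frac12\|\Delta u(t)\|_{L^2}^2$, we get $\|\Delta u(t)\|_{L^2}^2\ge 2E(u_0)$. Finally, $E(u_0)>0$ whenever $u_0\neq0$ and $K(u_0)\ge0$, by the identity $E=\frac{2}{d(p-1)}K+\frac{s_c}{d}\|\Delta u\|_{L^2}^2$.

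Putting these together, $\frac1T\int_0^TK(u)dt\ge 2c_0E(u_0)>0$ for every $T$. Since $M(u)\frac1T\int_0^TK(u(t))dt\to0$, it follows that $M(u)=0$.
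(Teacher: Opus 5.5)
Your argument is correct and is essentially the paper's proof: Proposition \ref{prop:virial} removes the momentum term, Lemma \ref{lem:coercivity} gives $K(u(t))\ge c_0\|\Delta u(t)\|_{L^2}^2$, and $\|\Delta u(t)\|_{L^2}^2\ge 2E(u_0)>0$ (using $E=\frac{2}{d(p-1)}K+\frac{s_c}{d}\|\Delta u\|_{L^2}^2$) bounds the time average of $K$ from below. Delete the Gagliardo--Nirenberg paragraph: it is never used, and its claim $S_\omega\le 2\tilde S_\omega$ when $K\ge0$ is false in general (only $S_\omega\ge\tilde S_\omega$ holds).
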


\noindent
{\bf Proof.}
If $K(u_0)=0$, then from $S_{\omega}(u_0)<S_{\omega}(Q_{\omega})$ and \eqref{eq:def_gs_2}, we see that $u_0=0$ and so $M(u)=0$.
If $K(u_0)>0$, we suppose that $M(u)\neq0$.
If $P(u_0)=0$ or \eqref{eq:C=0}, by using Proposition \ref{prop:virial}, we have
\begin{align}\label{eq:K=0}
      \tfrac{1}{T}\int_0^T K(u(t))dt= o(1)
\end{align}
as $T\to\infty$. Since $K(u_0)>0$, then we see that
\begin{align*}
      E(u)= E(u_0)
      = \tfrac{s_c}{d}\|\Delta u_0\|_{L^2}^2+\tfrac{2}{d(p-1)}K(u_0)
      > 0.
\end{align*} 
Moreover, by Lemma \ref{lem:invariant_set}, $K(u(t))>0$ for all $t\in\R$ and so by Lemma \ref{lem:coercivity}, 
\begin{align*}
      K(u(t))
      >&\  2\left\{1-\left(\tfrac{S_{\omega}(u)}{S_{\omega}(Q_{\omega})}\right)^{\frac{p-1}{2}}\right\}
      \|\Delta u(t)\|_{L^2}^2 
      > 4\left\{1-\left(\tfrac{S_{\omega}(u)}{S_{\omega}(Q_{\omega})}\right)^{\frac{p-1}{2}}\right\}
      E(u)>0
\end{align*}
for all $t\in\R$. This contradicts to \eqref{eq:K=0} for sufficiently large $T>0$.
\qed

\section{Proof of Theorem \ref{thm:main}}\label{sec:proof}

In this section, we completes the proof of Theorem \ref{thm:main}. The proof is based on the argument in \cite{PS}. \\

\noindent
{\bf Proof of Theorem \ref{thm:main}.}
If $L_{\omega}^*\geq S_{\omega}(Q_{\omega})$, Theorem \ref{thm:main} is proven.
To complete the proof, suppose that $L_{\omega}^*<S_{\omega}(Q_{\omega})$ and let $u$ be a global non-scattering solution to \eqref{eq:4NLS} in Proposition \ref{prop:critical_sol} with initial data $u_0$ satisfying $S_{\omega}(u_0)=L_{\omega}^*$ and $K(u_0)\geq0$. Then, there exists $x:\R\to\R^d$ such that $\{u(t,\cdot+x(t))\ |\ t\in\R\}$ is precompact in $H^2(\R^d)$. If $K(u_0)=0$, then from $S_{\omega}(u_0)<S_{\omega}(Q_{\omega})$ and \eqref{eq:def_gs_2}, we see that $u_0=0$ and so $u(t)=0$ for all $t\in\R$, which contradicts to \eqref{eq:X_of_u}. Hence, we may assume $K(u_0)>0$ in the following.

If $P(u)=0$, Corollary \ref{cor:zero_mom} implies that $M(u)=0$, which contradicts to \eqref{eq:X_of_u}. Hence, without loss of generality, we can assume that the momentum vector $P(u)$ is nonzero and parallel to the first vector $e_1$, namely, $P(u)=\bm{p}(u)e_1$ with $\bm{p}(u)>0$.

For a real-valued function $\phi:\R^d\to\R$, we have
\begin{align*}
      |\Delta(e^{i\phi(x)}u)|^2
      =&\ |\Delta u|^2
      -2|\nabla\phi|^2\re[\overline{u}\Delta u]
      +(|\nabla\phi|^4+|\Delta\phi|^2)|u|^2 \notag \\
      &\ +4|\nabla\phi\cdot\nabla u|^2 
      +2\Delta\phi\nabla\phi\cdot\nabla|u|^2
      +4\im[\nabla\phi\nabla\overline{u}\Delta u] \notag \\
      &\ +2\Delta\phi\im[\overline{u}\Delta u]
      -4|\nabla\phi|^2\nabla\phi\cdot\im[u\nabla\overline{u}].
\end{align*}
Integrating this and applying the integration by parts, we have 
\begin{align*}
      \int_{\R^d}|\Delta(e^{i\phi(x)}u)|^2dx
      =&\ \int_{\R^d}|\Delta u|^2dx
      +2\int_{\R^d}|\nabla\phi|^2|\nabla u|^2 dx 
      +4\int_{\R^d}|\nabla\phi\cdot\nabla u|^2dx \\
      &\ +\int_{\R^d}(|\nabla\phi|^4+|\Delta\phi|^2-2\nabla\Delta\phi\cdot\nabla\phi
      -2\Delta|\nabla\phi|^2)|u|^2dx \\
      &\ +4\im\int_{\R^d}\nabla\phi\cdot\Delta u\nabla\overline{u}dx  
      -2\im\int_{\R^d}\nabla\Delta\phi\cdot\overline{u}\nabla udx \\
      &\ -4\im\int_{\R^d}|\nabla\phi|^2\nabla\phi\cdot u\nabla\overline{u}dx.
\end{align*}
Let we take $\phi(x)=Xx_1$ for $X\in\R$ and denote $\tilde{u}=e^{iXx_1}u$. Then,
\begin{align}
      K(\tilde{u})=&\ K(e^{iXx_1}u) \notag \\
      =&\ 2\|\Delta(e^{iXx_1}u)\|_{L^2}^2-\tfrac{d(p-1)}{2(p+1)}\|u\|_{L^{p+1}}^{p+1} \notag \\
      =&\ K(u)+8C(u)X+12G(u)X^2+8\bm{p}(u)X^3+2M(u)X^4, \label{eq:K_tilde_u}
\end{align}
where
\begin{align*}
      C(u):=&\ \im\int_{\R^d}\Delta u\pa_1\overline{u}dx, \\
      G(u):=&\ \tfrac13\|\nabla u\|_{L^2}^2+\tfrac23\|\pa_1 u\|_{L^2}^2.
\end{align*}

Let $I_k$ be a sequence of intervals such that \eqref{eq:interact_virial} in Proposition \ref{prop:virial} holds with $\epsilon=1/k$ and take the average of \eqref{eq:K_tilde_u} on $I_k$. Then we obtain the sequence of polynomials
\begin{align*}
      F_k(X)
      :=&\ \tfrac{1}{|I_k|}\int_{I_k}K(u(t))dt
      +8\left(\tfrac{1}{|I_k|}\int_{I_k}C(u(t))dt\right)X 
      \notag \\
      &\ +12\left(\tfrac{1}{|I_k|}\int_{I_k}G(u(t))dt\right)X^2 
      +8\bm{p}(u)X^3
      +2M(u)X^4.
\end{align*}
Since all the coefficients of $F_k$ are uniformly bounded, without loss of generality, we can assume that $F_k$ converge pointwise to a polynomial
\begin{align*}
      F(X)= \langle K(u) \rangle + 8\langle C(u) \rangle X + 12\langle G(u) \rangle X^2
      + 8\bm{p}(u)X^3 + 2M(u)X^4,
\end{align*}
where
\begin{align*}
      \langle K(u) \rangle
      :=&\ \lim_{k\to\infty}\tfrac{1}{|I_k|}\int_{I_k}K(u(t))dt, \\
      \langle C(u) \rangle
      :=&\ \lim_{k\to\infty}\tfrac{1}{|I_k|}\int_{I_k}C(u(t))dt, \\
      \langle G(u) \rangle
      :=&\ \lim_{k\to\infty}\tfrac{1}{|I_k|}\int_{I_k}G(u(t))dt.
\end{align*}
By using Proposition \ref{prop:virial} with $\epsilon=1/k$ and $I=I_k$, we have 
\begin{align*}
      M(u)\langle K(u) \rangle = 2\bm{p}(u)\langle C(u) \rangle.
\end{align*}
By using this, we obtain
\begin{align}
      \tfrac{1}{\langle K(u) \rangle}F\left(-\left(\tfrac{\langle K(u) \rangle}{2M(u)}\right)^{\frac14}\right)
      =&\ 6\left(\tfrac{\langle G(u) \rangle}{\sqrt{\frac12\langle K(u) \rangle M(u)}}-1\right)
      -4\left(\sqrt{\alpha}-\tfrac{1}{\sqrt{\alpha}}\right)^2 \notag \\
      \leq&\ 6\left(\tfrac{\langle G(u) \rangle}{\sqrt{\frac12\langle K(u) \rangle M(u)}}-1\right),
      \label{eq:estimate_F}
\end{align}
where $\alpha=\bm{p}(u)/(\frac12\langle K(u) \rangle M(u)^3)^{\frac14}$.

\begin{lem}\label{lem:kappa}
If $S_{\omega}(u)<S_{\omega}(Q_{\omega})$ and $K(u)>0$, then
\begin{align}\label{eq:kappa}
      K(\tilde{u})> \tfrac{1-\kappa}{\kappa}(2\|\Delta u\|_{L^2}^2-K(u))
\end{align}
with
\begin{align*}
      \kappa:= \left(\tfrac{S_{\omega}(u)}{S_{\omega}(Q_{\omega})}\right)^{\frac{p-1}{2}}< 1.
\end{align*}
\end{lem}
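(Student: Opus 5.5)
The plan rests on one observation: multiplying by the unimodular factor $e^{iXx_1}$ leaves $|u|$ unchanged. Hence the potential term is the same for $u$ and $\tilde u$. Set
\begin{align*}
      N(u):=\tfrac{d(p-1)}{2(p+1)}\|u\|_{L^{p+1}}^{p+1}=2\|\Delta u\|_{L^2}^2-K(u)=N(\tilde u),
\end{align*}
and also note $M(\tilde u)=M(u)$. The claim \eqref{eq:kappa} then reads $K(\tilde u)>\frac{1-\kappa}{\kappa}N(u)$.

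The first step is to rewrite Lemma \ref{lem:coercivity} in terms of $N$. For any $v$ with $K(v)>0$, set $\kappa_v:=(S_\omega(v)/S_\omega(Q_\omega))^{\frac{p-1}{2}}$. Lemma \ref{lem:coercivity} gives $2\|\Delta v\|_{L^2}^2-N(v)>2(1-\kappa_v)\|\Delta v\|_{L^2}^2$, that is,
\begin{align*}
      N(v)<2\kappa_v\|\Delta v\|_{L^2}^2 .
\end{align*}
Applied to $v=u$, this gives $2\|\Delta u\|_{L^2}^2>N(u)/\kappa$. Following the strategy announced in the introduction, I would then split into two cases according to which of $\|\Delta u\|_{L^2}$ and $\|\Delta\tilde u\|_{L^2}$ is larger.

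\emph{Case $\|\Delta u\|_{L^2}\le\|\Delta\tilde u\|_{L^2}$.} Here we simply estimate
\begin{align*}
      K(\tilde u)=2\|\Delta\tilde u\|_{L^2}^2-N(u)\ge 2\|\Delta u\|_{L^2}^2-N(u)>\tfrac{1}{\kappa}N(u)-N(u)=\tfrac{1-\kappa}{\kappa}N(u).
\end{align*}

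\emph{Case $\|\Delta\tilde u\|_{L^2}<\|\Delta u\|_{L^2}$.} This is the step needing some care, because we must apply the coercivity lemma to $\tilde u$, which requires $K(\tilde u)>0$. Since mass and potential agree, we get $S_\omega(\tilde u)<S_\omega(u)<S_\omega(Q_\omega)$ and $\tilde S_\omega(\tilde u)<\tilde S_\omega(u)$. Because $K(u)>0$, we also have $\tilde S_\omega(u)\le S_\omega(u)<S_\omega(Q_\omega)$. Moreover $\tilde u\neq0$, since $K(u)>0$ forces $u\ne0$. Lemma \ref{lem:def_gs_3} then excludes $K(\tilde u)\le0$, so $K(\tilde u)>0$.

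With positivity in hand, the rewritten coercivity applied to $v=\tilde u$ gives $N(u)=N(\tilde u)<2\kappa_{\tilde u}\|\Delta\tilde u\|_{L^2}^2$, where $\kappa_{\tilde u}<\kappa$. Consequently
\begin{align*}
      K(\tilde u)=2\|\Delta\tilde u\|_{L^2}^2-N(u)>\Big(\tfrac{1}{\kappa_{\tilde u}}-1\Big)N(u)\ge\tfrac{1-\kappa}{\kappa}N(u).
\end{align*}

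Finally, $\kappa<1$ is immediate from the hypothesis $S_\omega(u)<S_\omega(Q_\omega)$. Note also that $S_\omega(u)>0$: since $K(u)>0$ we have $S_\omega(u)\ge\tilde S_\omega(u)>0$, so $\kappa$ and $\kappa_{\tilde u}$ are well defined and positive. The only genuine obstacle is the second case, specifically verifying $K(\tilde u)>0$ so that Lemma \ref{lem:coercivity} is applicable there; the monotonicity of $\tilde S_\omega$ and $S_\omega$ in $\|\Delta\cdot\|_{L^2}$ at fixed mass and potential handles it.
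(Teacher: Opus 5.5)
Your proof is correct and follows the paper's structure. It uses the same case split on whether $\|\Delta\tilde u\|_{L^2}\ge\|\Delta u\|_{L^2}$, with Lemma~\ref{lem:coercivity} applied to $u$ in the first case and to $\tilde u$ in the second.

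The one difference is how you get $K(\tilde u)>0$ in the second case. The paper treats $K(\tilde u)\le 0$ as a separate subcase and rules it out by an intermediate-value argument in $X$: it picks $X'$ with $K(e^{iX'x_1}u)=\epsilon$ and lets $\epsilon\to0$ to force $\kappa\ge1$. You instead obtain $K(\tilde u)>0$ directly from Lemma~\ref{lem:def_gs_3} via $\tilde S_\omega(\tilde u)<\tilde S_\omega(u)\le S_\omega(u)<S_\omega(Q_\omega)$, the same device used inside the proof of Lemma~\ref{lem:coercivity}. Your version is shorter and uses only that $\tilde u$ has the same mass and potential energy as $u$.
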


\noindent
{\bf Proof.}
By using Lemma \ref{lem:coercivity}, we have
\begin{align*}
      K(u)> 2(1-\kappa)\|\Delta u\|_{L^2}^2.
\end{align*}
If $\|\Delta\tilde{u}\|_{L^2}^2\geq\|\Delta u\|_{L^2}^2$, then $K(\tilde{u})\geq K(u)$ and hence
\begin{align*}
      K(\tilde{u})\geq K(u)> \tfrac{1-\kappa}{\kappa}(2\|\Delta u\|_{L^2}^2-K(u)).
\end{align*}
Thus, \eqref{eq:kappa} holds. On the other hand, if $\|\Delta\tilde{u}\|_{L^2}^2<\|\Delta u\|_{L^2}^2$, we consider the two cases, $K(\tilde{u})>0$ and $K(\tilde{u})\leq0$. In the former case, by using Lemma \ref{lem:coercivity}, we have
\begin{align*}
      K(\tilde{u})
      >&\ 2\left\{
      1-\left(\tfrac{S_{\omega}(\tilde{u})}{S_{\omega}(Q_{\omega})}\right)^{\frac{p-1}{2}}
      \right\}\|\Delta\tilde{u}\|_{L^2}^2 \notag \\
      >&\ 2(1-\kappa)\|\Delta\tilde{u}\|_{L^2}^2 \notag \\
      =&\ (1-\kappa)(K(\tilde{u})-K(u)+2\|\Delta u\|_{L^2}^2),
\end{align*}
which implies \eqref{eq:kappa}. In the latter case, for any $0<\epsilon<K(u)$, we can take $X^{\prime}\in\R$ such that $K(e^{iX^{\prime}x_1}u)=\epsilon$ and $\|\Delta(e^{iX^{\prime}x_1}u)\|_{L^2}^2<\|\Delta u\|_{L^2}^2$. By using Lemma \ref{lem:coercivity}, we have
\begin{align*}
      K(e^{iX^{\prime}x_1}u)
      >&\ 2\left\{
      1-\left(\tfrac{S_{\omega}(e^{iX^{\prime}x_1}u)}{S_{\omega}(Q_{\omega})}\right)^{\frac{p-1}{2}}
      \right\}\|\Delta(e^{iX^{\prime}x_1}u)\|_{L^2}^2 \notag \\
      >&\ 2(1-\kappa)\|\Delta(e^{iX^{\prime}x_1}u)\|_{L^2}^2 \notag \\
      =&\ (1-\kappa)(K(e^{iX^{\prime}x_1}u)-K(u)+2\|\Delta u\|_{L^2}^2).
\end{align*}
Hence,
\begin{align*}
      \epsilon= K(e^{iX^{\prime}x_1}u)
      > \tfrac{1-\kappa}{\kappa}(2\|\Delta u\|_{L^2}^2-K(u)).
\end{align*}
Letting $\epsilon\to0$, we have $\kappa\geq1$, which is a contradiction.
\qed \\

Taking the large time average of \eqref{eq:kappa} in Lemma \ref{lem:kappa}, we have
\begin{align*}
      F(X)\geq \tfrac{1-\kappa}{\kappa}(\langle D(u) \rangle-\langle K(u) \rangle),
\end{align*}
where 
\begin{align*}
      \langle D(u) \rangle
      := \lim_{k\to\infty}\tfrac{1}{|I_k|}\int_{I_k}D(u(t))dt, \ \ \ 
      D(u)
      := 2\|\Delta u\|_{L^2}^2.
\end{align*}
Hence, by taking $X=-\left(\frac{\langle K(u) \rangle}{2M(u)}\right)^{\frac14}$ and using \eqref{eq:estimate_F}, we obtain
\begin{align}\label{eq:LTA_kappa}
      6\left(\tfrac{\langle G(u) \rangle}{\sqrt{\frac12\langle K(u) \rangle M(u)}}-1\right)
      \geq \tfrac{1-\kappa}{\kappa}\left(\tfrac{\langle D(u) \rangle}{\langle K(u) \rangle}-1\right).
\end{align}
Let $\Lambda>1$ be such that $\langle D(u) \rangle=\Lambda^2\langle K(u) \rangle$. Applying this to \eqref{eq:LTA_kappa} and using the estimate $\langle G(u) \rangle \leq \sqrt{\frac12\langle D(u) \rangle M(u)}$, we get
\begin{align*}
      6(\Lambda-1)\geq \tfrac{1-\kappa}{\kappa}(\Lambda^2-1),
\end{align*}
which implies that
\begin{align*}
      \tfrac{1-\kappa}{\kappa}\leq \tfrac{6}{\Lambda+1}< 3.
\end{align*}
Thus, we have $4\kappa>1$. This proves that
\begin{align*}
      L_{\omega}^*=S_{\omega}(u)>\left(\tfrac14\right)^{\frac{2}{p-1}}S_{\omega}(Q_{\omega}),
\end{align*}
which completes the proof of Theorem \ref{thm:main}.
\qed

\appendix 
\renewcommand{\theequation}{A.\arabic{equation}} 
\setcounter{equation}{0} 

\subsection*{Acknowledgement}
The author is grateful to Professor Satoshi Masaki for many helpful suggestions.
He also expresses his deep thanks to Professor Kenji Nakanishi for pointing out an error in the author's previous study, which is the beginning of the present study.
The author was supported by JSPS KAKENHI Grant Number 23K13003.


\subsection*{Conflict of interest}
The author was supported by JSPS KAKENHI Grant Number 23K13003.

\subsection*{Data availability}
Data sharing is not applicable to this article as no new data were created or analyzed in this
study.


\end{document}